\documentclass[11pt,reqno]{amsart}

\usepackage{amsmath,amssymb}
\usepackage{amsfonts}
\usepackage{xcolor}
\usepackage{mathtools}
\usepackage{nccmath}

\usepackage{hyperref}
\usepackage{comment}
\usepackage[mathscr]{euscript}
\DeclareSymbolFont{rsfs}{U}{rsfs}{m}{n}
\DeclareSymbolFontAlphabet{\mathscrsfs}{rsfs}

\hypersetup{
    colorlinks=true,
    linkcolor=red,
    filecolor=magenta,      
    urlcolor=cyan,
}
\usepackage{geometry}
\def\XXint#1#2#3{{\setbox0=\hbox{$#1{#2#3}{\int}$ }
\vcenter{\hbox{$#2#3$ }}\kern-.6\wd0}}

\usepackage{amsthm}
\newtheorem{theorem}{Theorem}[section]

\newtheorem{lemma}[theorem]{Lemma}
\newtheorem{prop}[theorem]{Proposition}
\newtheorem{rem}[theorem]{Remark}
\newtheorem{claim}[theorem]{Claim}

\numberwithin{equation}{section}

\newcommand{\la}{\langle}
\newcommand{\ra}{\rangle}

\renewcommand{\Im}{\operatorname{Im}}

\newcommand{\R}{\mathbb R}
\newcommand{\C}{\mathbb C}

\newcommand{\N}{\mathbb N}
\newcommand{\X}{\mathcal X}

\numberwithin{equation}{section}

\author[I. Petrenko]{Iryna Petrenko}
\address{Department of Mathematics  \& Statistics\\Florida International University,  Miami, FL, USA}
\curraddr{}
\email{ipetr006@fiu.edu}

\author[O. Ria\~{n}o]{Oscar Ria\~{n}o }
\address{Universidad Nacional de Colombia, Bogot\'a, Colombia}
\curraddr{}
\email{ogrianoc@unal.edu.co}

\author[S. Roudenko]{Svetlana Roudenko}
\address{Department of Mathematics  \& Statistics\\Florida International University,  Miami, FL, USA}
\curraddr{}
\email{sroudenko@fiu.edu}

\title[Local well-posedness for bi-NLS]{Local well-posedness for the bi-harmonic\\ nonlinear Schr\"{o}dinger equation in low dimensions} 

\subjclass[2010]{Primary: 35Q55, 35A01, 35B40; secondary: 42B37}

\keywords{biharmonic NLS, fourth-order nonlinear Schr\"{o}dinger equation, well-posedness, local existence, uniqueness}

\begin{document}

\begin{abstract} 
In this paper we consider the 4th order (or biharmonic) nonlinear Schr\"odinger  (NLS) equation in dimensions 1, 2, and 3, where the potential term is expressed as a power nonlinearity (for any positive power) and the dispersion operator has the {\it fourth} order combined with the lower {\it second} order. The fourth order NLS equation has recently attracted attention of researchers since quartic solitons have been experimentally obtained in optics, and thus, the mathematical theory of solutions to the fourth-order NLS equation in physical dimensions $N=1,2,3$ is timely to develop. 
In this work we show the local well-posedness of the 4th order or biharmonic NLS equation on a weighted subset of a Sobolev space as well as in $H^s$ spaces.
\end{abstract}

\maketitle

{\hypersetup{linkcolor=black}%
\tableofcontents
}

\section{Introduction}

In this paper we are interested in obtaining local well-posedness theory for the general biharmonic NLS (bi-NLS) equation that includes not only the bi-Laplacian $\Delta^2$ but also lower order derivatives, namely, 
\begin{equation}\label{biNLS}
i\partial_{t} {u} -\Delta^{2} {u} -2a \Delta u +\lambda|u|^{\alpha}u=0, \quad x \in \mathbb{R}^N, \quad \lambda\in \R, \quad a \in \mathbb{R},
\end{equation}
which is becoming important in physical applications such as quartic solitons or light bullets in nonlinear optics, see \cite{Lightbullets2024}, \cite{Nature}. 

The biharmonic NLS equation appeared in early 90s in works of Karpman \cite{K1991} and Karpman-Shagalov \cite{KS1991}, where they incorporated the higher-order dispersion that appeared in laser beam propagation. A recent revival of this equation was due to the experimental work that produced pure quartic solitons in  photonic crystals, for example, see \cite{Nature}, \cite{Tam2019}, \cite{Tam2020}. For a review refer to \cite{Aceves2023}, and some further developments, e.g., see \cite{HA2024}, \cite{PA2024}.

The local well-posedness of the initial value problem  (or Cauchy problem) for \eqref{biNLS} with $u(0,x)=u_0$ in some Sobolev space has been studied for some time. In the energy space $H^2$ it was established by Ben-Artzi, Koch \& Saut \cite{BAKS2000}, where the sharp dispersive estimates were given for the biharmonic Schr\"odinger operator; for the case $a=0$ see also Kenig, Ponce \& Vega \cite{KPV1991}. More recent results for the biharmonic NLS were established by Pausader \cite{Pausader2007}, \cite{Pausader2009_1}, \cite{Pausader2009_2}, see also Dinh \cite{Dinh2018}. In particular, the result of the above papers was the local well-posedness in $H^2$ in one and two dimensions for $\alpha \geq 1$, in dimensions $N=3,4$ for $\alpha \geq 2/N$, and in dimensions $N \geq 5$ for $2/N \leq \alpha < 8/(N-4)$. 
Hao, Hsiao and Wang \cite{HHW2006, HHW2007} discussed the Cauchy problem in a high-regularity setting. 
Hayashi, Mendez-Navarro, Naumkin \cite{HMN2016} studied local and global well-posedness and scattering in weighted Sobolev spaces. 
Typically the local well-posedness is obtained by such standard machinery as Strichartz estimates, however, that approach does not allow considering {\it small powers} in higher dimensions, or is quite challenging, if at all possible, for {\it combinations} of dispersions  or nonlinearities. Another approach to obtain well-posedness, that has been introduced by Cazenave-Naumkin in \cite{CazNaum2016}, is in a weighted subset of Sobolev space with some non-vanishing condition. This approach turned out to be quite useful in various equations, for example, for the NLS with combined nonlinearities \cite{Alex}, for the Hartree equation \cite{ARR2022}, for periodic NLS \cite{SRR}, for the KdV 
\cite{LinaresMiyazaki2019}, as well as an extension to the general powers in \cite{Diana2023}. 
Local well-posedness and scattering in the fourth order NLS have been recently considered in \cite{HMN2016}, where authors introduce a factorization when $\alpha>3$ for the free evolution group to obtain the large time asymptotics of solutions. In \cite{Naum2022} the defocusing pure case of biharmonic NLS is considered and more rapid time decay rate is obtained, compared to the corresponding linear case. 
\smallskip

In this paper we investigate the local well-posedness of \eqref{biNLS} in physical dimensions 1, 2, and 3 on a weighted subspace of the Sobolev space $H^1$, where we can address all positive powers of nonlinearity $\alpha > 0$ (including low powers $0<\alpha<1$ in dimensions one and two, and $0<\alpha <2/3$ in three dimensions), as well as the slow decay initial conditions (such as polynomial decay).  
Inspired by the results in \cite{CazNaum2016}, we introduce a space of functions $\X$ (denoted as $\mathcal X_N$ in each dimension $N$), which is convenient for dealing with nonsmooth nonlinearities. 
\smallskip

We start with the definition of the space in 1D. For a given nonlinearity $\alpha$, take $n \in \mathbb N$ such that
\begin{equation}\label{E:n-1d}
{n>0}   
\end{equation}
and set
\begin{equation}\label{SpaceDim1gen}
r = \max\{5, 3 n+2 \} \quad \mbox{and} \quad M =  3 n +r. 
\end{equation}
Define the space $\X$ on $\mathbb R$, denoted by $\X_1$, with parameters $n,r,M$ as follows
\begin{equation}\label{spaceX1-1D}
\X_1=
\left\{
\begin{aligned}
& \, u \in {  \dot{H}^{1}(\R)\cap \dot{H}^{M}(\mathbb{R})}, \\
& \langle x \rangle^{n} u \in L^{\infty}(\mathbb{R}), \quad  \\
& \langle x \rangle^{n} \partial^{\beta}u \in L^{2}(\mathbb{R}), \quad \mbox{for} \quad  1  \leq \beta  \leq r,\\
& \, \partial^{\beta}u \in L^{2}(\mathbb{R}), \hspace{0.4in} \mbox{for} {  \quad r+1 \leq \beta \leq M},
\end{aligned}
\right.
\end{equation}
which is equipped with the norm
\begin{equation}\label{Xnorm1D}
 \Vert u\Vert_{\X_1} =  \Vert\langle x \rangle^{n}  u \Vert_{L^{\infty}(\R)} + \sum_{1 \leq \beta \leq r} \Vert\langle x \rangle^{n}\partial^{\beta}u\Vert_{L^{2}(\R)} +{ \sum_{r+1\leq \beta\leq M} \Vert \partial^{\beta}u\Vert _{L^{2}(\R)}}.  
\end{equation}
It seems that the space $\mathcal X_1$ should be denoted by $\X_1^{(n,r,M)}$, however, we typically suppress the dependence on the parameters $n,r,M$, see the next remark. 
\begin{rem}\label{R:rnM}
Observe that $\X_1^{(n_1, r_1,M_1)} \supseteq \X_1^{(n_2,r_2,M_2)}$ if $n_1 \leq n_2$, {  and} $r_1 \leq r_2$,  and $M_1 \leq M_2$; in other words, $\|f\|_{\mathcal X_1^{n_1,r_1,M_1}} \leq \|f\|_{\X^{n_2,r_2,M_2}}$. 
Therefore, if we show boundedness for the smaller indexes of the space $\mathcal X_1$, it will also hold for the larger ones. 
\end{rem}

\begin{rem}\label{R:r}
In this paper, we set the index $r$ with the equality sign; in general, one can be flexible and choose $r\geq \max$ of given quantities, however, then choosing $M$ will be delicate, and will need interpolations. We investigate this elsewhere. 
\end{rem}

In dimensions $N=2$ and $3$, we define $\X_N$
as follows. For a given nonlinearity $\alpha$, take $n\in\N$ such that 
{ \begin{equation}\label{E:n-2D-3D}
n > \frac{N}{2\alpha}=\left\{
\begin{aligned}
& \,  \frac1{\alpha} & \mbox{if}~~ N=2\\
& \,  \frac3{2\alpha} & \mbox{ if}~~ N=3.
\end{aligned}
\right.
\end{equation}}
Then choose $r,M\in\N$ such that
\begin{equation}\label{spaceX1 gen 2D}
r=\max\{{  7}, 3 n+4\}\quad \mbox{and} \quad M=3 n +r, 
\end{equation}
and define  
\begin{equation}\label{spaceX1 2D}
\X_{N}=
\left\{
\begin{aligned}
& \, u \in { \dot{H}^{1}(\R^N)\cap \dot{H}^{M}(\R^N)}, \\
&\langle x \rangle^{n}\partial^{\beta}u \in L^{\infty}(\mathbb{R}^N), \quad \mbox{for each multi-index} \quad  |\beta| \leq 1,  \\
& \langle x \rangle^{n} \partial^{\beta}u \in L^{2}(\mathbb{R}^N), \quad \mbox{for each multi-index} \quad  2  \leq |\beta | \leq r,\\
& \, \partial^{\beta}u \in L^{2}(\mathbb{R}^N), \quad  \mbox{for each multi-index} \quad r+1 \leq |\beta| \leq M,
\end{aligned}
\right.
\end{equation}
equipped with the norm
\begin{equation}\label{XnormND}
\Vert u\Vert_{\X_N} = \sum_{ |\beta| \leq 1}
\Vert\langle x \rangle^{n}\partial^{\beta} u\Vert_{L^{\infty}(\R^N)} + \sum_{2 \leq |\beta| \leq r} \Vert\langle x \rangle^{n}\partial^{\beta}u\Vert_{L^{2}(\R^N)} +{  \sum_{r+1\leq |\beta|\leq M}\Vert \partial^{\beta} u\Vert _{L^{2}(\R^N)}}.  
\end{equation}

To show that the space $\mathcal X_N$ is non-empty, we give the following example: 
\begin{equation*}
u_0(x)=\frac{c}{\langle x \rangle^n}+\varphi, \quad c\in \C \setminus\{0\},
\end{equation*}
where $\varphi$ is a Schwartz class with $\|\langle x\rangle^n \varphi\|_{L^{\infty}}< |c|$.
We also mention that this function can be taken as an initial data for the next theorem (which also satisfies a non-vanishing condition \eqref{inf cond u0} below).

\smallskip

\begin{rem} The spaces $\mathcal{X}_N$ are Banach spaces. In particular, if $n>\frac{N}{2}$, then the estimate $\|u\|_{L^2}\leq \|\langle x\rangle^{-n}\|_{L^2}\|\langle x \rangle^n u\|_{L^{\infty}}<\infty$ implies that $\mathcal{X}_N\subseteq H^{M}(\mathbb{R}^N)$.
\end{rem}

Our main result is the local well-posedness of solutions to \eqref{biNLS} in the space $ \mathcal X_{N}$, $N=1,2$ or $3$. 

\begin{theorem}[Local well-posedness in $\X_N$]\label{theorem1}
Let $\alpha>0$, $\lambda\in\C$,  and $a\in \mathbb{R}$.
Let $n, r,M\in \mathbb{Z}^{+}$ be defined by \eqref{E:n-1d} and \eqref{SpaceDim1gen} in dimension $N=1$, and for dimensions $N = 2, 3$, by \eqref{E:n-2D-3D} and \eqref{spaceX1 gen 2D}. Consider the space $\X_N$ defined by \eqref{spaceX1-1D} for $N=1$, and by \eqref{spaceX1 2D} for $N=2,3$. If $u_0\in \X_N$ satisfies
\begin{equation}\label{inf cond u0}
       \inf_{x \in \mathbb{R}^N}\langle x \rangle^{n}|u_0(x)| >0 , 
   \end{equation}
then there exist a time $T>0$ and a unique solution $u\in C([-T,T];\X_N)$ of \eqref{biNLS} such that
	\begin{equation}\label{Xinf}
		 \inf_{(x,t)\in\mathbb{R}^N\times [-T,T]} \langle x \rangle^n |u(x,t)|>0.
	\end{equation}
Moreover, the map $u_0 \mapsto u(\cdot,t)$ is continuous in the following sense: for any $0<\widetilde{T}<T$, there exists a neighborhood $V$ of $u_0$ in $\X_N$, satisfying \eqref{inf cond u0}, such that the map data-to-solution is Lipschitz continuous from $V$ into the class $C([-\widetilde{T},\widetilde{T}],\X_N)$.
\end{theorem}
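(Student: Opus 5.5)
We follow the Cazenave--Naumkin strategy of \cite{CazNaum2016}: an energy (not Strichartz) argument in $\X_N$, closed by a contraction in a weaker norm. Write the Duhamel form
\begin{equation*}
u(t)=S(t)u_0+i\lambda\int_0^t S(t-s)\big(|u|^{\alpha}u\big)(s)\,ds,\qquad S(t)=e^{-it(\Delta^2+2a\Delta)},
\end{equation*}
and work in the set
\begin{equation*}
E_T=\Big\{u\in C([-T,T];\X_N):\ \sup_{|t|\le T}\|u(t)\|_{\X_N}\le 2C_0\|u_0\|_{\X_N},\ \inf_{x,t}\langle x\rangle^n|u(x,t)|\ge \tfrac12 \inf_x\langle x\rangle^n|u_0(x)|\Big\},
\end{equation*}
equipped with a weaker metric, say $\sup_t\|\langle x\rangle^{n}(u-v)\|_{L^\infty}$ plus weighted $L^2$ norms of a few derivatives. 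The non-vanishing lower bound is what makes the nonlinearity usable: on $E_T$, $|u|\ge c\langle x\rangle^{-n}>0$, so $|u|^\alpha u$ is smooth in $(u,\bar u)$. Each derivative $\partial^\beta(|u|^\alpha u)$ is then a sum of terms $|u|^{\alpha}\,u^{-j}\bar u^{-k}\prod\partial^{\gamma_i}u$ with at most $|\beta|$ extra factors of $u,\bar u$ in the denominator. Each inverse power costs a factor $\langle x\rangle^{n}$, and this loss is compensated by the weights $\langle x\rangle^n$ on the derivatives. The choice $r\ge 3n+2$ (resp.\ $3n+4$) and $M=3n+r$ is exactly the bookkeeping that makes these weight counts close.

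\textbf{Step 1 (linear estimates).} Show $\|S(t)f\|_{\X_N}\le C_0\|f\|_{\X_N}$ for $|t|\le 1$. Since $[x_j,S(t)]$ is $t$ times a third-order (plus first-order) differential operator applied through $S(t)$, one gets
\begin{equation*}
\langle x\rangle^n\partial^\beta S(t)f=S(t)\big(\langle x\rangle^n\partial^\beta f\big)+\sum_k t^{k}\,(\cdots)\,\partial^{\beta+3k}f .
\end{equation*}
Derivatives up to order $|\beta|+3n\le r+3n=M$ thus appear, which explains $M=3n+r$. The $L^\infty$ components are handled by Sobolev embedding in the weighted derivative norms: in 1D, $\|\langle x\rangle^n g\|_{L^\infty}\lesssim\|\langle x\rangle^n g\|_{L^2}+\|\partial(\langle x\rangle^n g)\|_{L^2}$ applied to $g=S(t)f-f=\int_0^t\partial_s S(s)f\,ds$. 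In 2D and 3D one uses Gagliardo--Nirenberg with two derivatives for $\langle x\rangle^n\partial^{\le1}$, which is why those components are measured only through $L^\infty$ while $|\beta|\ge 2$ are in $L^2$. This also gives the time continuity $\|\langle x\rangle^n(S(t)u_0-u_0)\|_{L^\infty}\lesssim|t|\,\|u_0\|_{\X_N}$. That estimate preserves \eqref{Xinf} for small $T$, both for the free part and for the Duhamel term.

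\textbf{Step 2 (nonlinear estimate, main obstacle).} The key bound is
\begin{equation*}
\big\||u|^\alpha u\big\|_{\X_N}\le C\big(\|u\|_{\X_N},(\inf\langle x\rangle^n|u|)^{-1}\big).
\end{equation*}
Expand $\partial^\beta(|u|^\alpha u)$ as above. The factor $|u|^\alpha$ is bounded by $\langle x\rangle^{-n\alpha}\|u\|_{\X}^\alpha$, and quotients $\partial^{\gamma}u/u$ are bounded by $\langle x\rangle^{n}|\partial^\gamma u|\,(\inf\langle x\rangle^n|u|)^{-1}$. For $|\beta|>r$ the weights are absent from the norm, and one must check that the accumulated $\langle x\rangle^{n}$ losses are absorbed: all but one factor go in weighted $L^\infty$ (via Sobolev embedding, using $r$ large), and the top one in $L^2$. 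Here $n>N/(2\alpha)$ enters in dimensions 2 and 3, because the gain $\langle x\rangle^{-n\alpha}$ must make $\langle x\rangle^{-n\alpha}$ lie in $L^2$ for the low-order unweighted-type terms. I expect this combinatorial weight count, especially the top derivatives $|\beta|=M$, to be the delicate part. The same expansion yields a Lipschitz bound for $|u|^\alpha u-|v|^\alpha v$ in the weak metric for $u,v\in E_T$.

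\textbf{Step 3 (closing).} Steps 1--2 give that the Duhamel map sends $E_T$ to itself for $T$ small, and that it is a contraction in the weak metric. $E_T$ is complete in the weak metric by weak-$*$ lower semicontinuity of the $\X_N$ norm, so a fixed point exists and is bounded in $\X_N$. Continuity in time with values in $\X_N$ (not only weak continuity) follows by the standard argument: first weak continuity, then continuity of the norm via an energy identity or time-reversibility. Uniqueness holds in the class satisfying \eqref{Xinf} by the weak-metric contraction. For Lipschitz dependence, data in a small $\X_N$-neighborhood $V$ of $u_0$ inherit \eqref{inf cond u0} with half the constant, and run on a common interval $[-\widetilde T,\widetilde T]$. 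Lipschitz continuity into $C([-\widetilde T,\widetilde T],\X_N)$ requires redoing the difference estimate of Step 2 in the full $\X_N$ norm. This is possible because differences of smooth functions of $(u,\bar u)$ away from zero are controlled by the same weight counting, at the cost of using the $\X_N$ bounds of both solutions.
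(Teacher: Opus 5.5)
Your Steps 1--2 follow the paper's architecture. The weighted estimate for the group costs $3n$ extra unweighted derivatives, whence $M=3n+r$. The weighted $L^\infty$ components come from the fundamental theorem of calculus plus Sobolev embedding, and this also keeps $\langle x\rangle^{n}(S(t)u_0-u_0)$ small so that \eqref{Xinf} is preserved. In the Leibniz expansion, negative powers of $|u|$ are paid for with $\langle x\rangle^{n}$ via the lower bound, and $n>N/(2\alpha)$ and $r\ge 3n+2$ (resp.\ $3n+4$) enter exactly where you say.

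One ingredient is missing in Step 1. The expansion $x^{n}S(t)=S(t)(x+tQ(D))^{n}$ produces mixed terms $x^{n-k}\partial^{\beta+3k}f$, not pure derivatives. When $|\beta|+3k>r$ these are not part of the $\X_N$ norm. The paper bounds them by the interpolation inequality of Proposition~\ref{interpolation} between $\|\langle x\rangle^{n}\partial^{\beta}f\|_{L^2}$ and $\|J^{3n}\partial^{\beta}f\|_{L^2}$; this is the content of Lemma~\ref{linearweightedspaces}.

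The real problem is Step 3, where you depart from the paper by contracting in a weaker metric. As described, this does not close.

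First, you cannot control $\|\langle x\rangle^{n}S(t)g\|_{L^\infty}$ by $\|\langle x\rangle^{n}g\|_{L^\infty}$ plus weighted $L^2$ norms of a few derivatives. Moving $n$ powers of $x$ through $e^{it\Delta^2}$ costs $3n$ derivatives, so one needs $\|J^{3n}\partial^{\sigma}g\|_{L^2}$ for $|\sigma|\le m_N+4$, where $m_1=1$ and $m_2=m_3=2$. Your weak metric must therefore itself be an $\X_N$-type norm with $r'=m_N+4$ (resp.\ $m_N+5$) and $M'=3n+r'$, and the weak difference estimate is then no easier than the strong one.

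Second, $E_T\subset C([-T,T];\X_N)$ is not complete for a weaker metric, and the upgrade to strong continuity is not justified as stated:
\begin{itemize}
\item no component of the $\X_N$ norm is conserved;
\item a time-reversibility argument needs a constant of the form $1+O(|t|)$, and your Step~1 constant $C_0$ is not of that form;
\item for the weighted $L^\infty$ components, weak-$*$ continuity plus norm continuity does not give strong continuity.
\end{itemize}

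Third, contraction in $E_T$ gives uniqueness only inside that ball, not among all solutions satisfying \eqref{Xinf}.

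The detour is also unnecessary. On $\{\langle x\rangle^{n}|u|\ge 1/\eta\}$ the nonlinearity is smooth and there is no derivative loss, so the difference estimate holds in the full norm, \eqref{nonlinear diffrerence} in Lemma~\ref{nonlinear est}. You need exactly that estimate anyway for Lipschitz dependence. The paper therefore contracts directly in $d(u,v)=\sup_{t}\|u-v\|_{\X_N}$ on $\mathfrak{E}_{K,T}$.

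For uniqueness in the full class \eqref{Xinf}, the paper uses an $L^2$ energy estimate for $w=u-v$ followed by Gronwall. The dispersive terms drop out, and \eqref{MVT} bounds $\big||u|^{\alpha}-|v|^{\alpha}\big||u|$ by $C(\eta,K_1)|w|$.

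If you want strong continuity in time, read it off the Duhamel formula, using that $S(t)$ is strongly continuous on $\X_N$. The weighted $L^\infty$ part follows from \eqref{diffsol2}, and the weighted $L^2$ parts follow by density of Schwartz functions in $H^{3n}\cap\langle x\rangle^{-n}L^2$.
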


\begin{rem}
In the one-dimensional case $N=1$, the weight exponent $n>0$ can be chosen arbitrarily, in particular, independent of $\alpha>0$. However, for dimensions $N=2,3$, we require
$$n>\frac{N}{2\alpha}.$$
This restriction arises in the proof of Theorem \ref{theorem1}, where it is necessary to guarantee that $\langle x\rangle^{-n\alpha}\in L^2(\mathbb{R}^N)$, see Case 1 in Lemma \ref{nonlinear est} below. Such a requirement never appears in the one-dimensional setting, where the nonlinear estimates can be established without imposing this additional integrability condition. Consequently, the one-dimensional case enjoys greater flexibility with respect to the choice of the weight exponent.
\end{rem}

The proof of Theorem \ref{theorem1} 
is based on a fixed point argument, where the estimates that we derive in this paper below are fundamental to obtain these results, specifically, the linear estimates in  Lemmas \ref{pure bi-laplacian}, \ref{linear est l-laplacian}, 
and the nonlinear estimates in Lemma \ref{nonlinear est}. Given $\alpha>0$, the nonlinearity $|u|^{\alpha}u$ is not necessarily regular at the origin, therefore, we impose the condition \eqref{inf cond u0}, similarly to \cite{CazNaum2016} and construct local solutions, for which \eqref{Xinf} holds. This ensures that the nonlinearity $|u|^{\alpha}u$ is supported outside the origin, and hence, smooth. We also note that the weight in the condition \eqref{inf cond u0} is necessary, specifying that $u_0$ not only does not vanish, but also, it has a polynomial decay (note, that $\la x\ra^{-n} \lesssim |u_0(x)| \in L^2(\mathbb R^N)$ implies the restriction $n > \frac{N}2$). 
For this reason, we require a detailed analysis in weighted function spaces for the solutions of the linear equation $i\partial_t u-\Delta^{2}u=0$. More precisely, we establish the following weighted linear estimate.
\begin{lemma}\label{linearweightedspaces}
Let $N, n\in \mathbb N$. 
Then for any $t\in \mathbb{R}$
\begin{equation}\label{weightedineq-biharmonic}
\Vert  \langle x\rangle^{n}e^{it \Delta^2}f\Vert_{L^2(\mathbb{R}^N)}   \lesssim  \langle t\rangle^{n} (\Vert  J^{3n}f\Vert_{L^2(\mathbb{R}^N)}+\Vert  \langle x\rangle^{n}f\Vert_{L^2(\mathbb{R}^N)})  ,
\end{equation}
where the implicit constant above is independent of $t$. 
\end{lemma}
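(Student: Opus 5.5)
The idea is to move the weight to the Fourier side, where it becomes a differential operator acting on the multiplier $e^{it|\xi|^4}\widehat f(\xi)$. By Plancherel,
\[
\Vert \langle x\rangle^{n} e^{it\Delta^2} f\Vert_{L^2} \simeq \sum_{|\gamma|\le n}\Vert x^{\gamma} e^{it\Delta^2}f\Vert_{L^2} \simeq \sum_{|\gamma|\le n}\Vert \partial_\xi^{\gamma}\bigl(e^{it|\xi|^4}\widehat f(\xi)\bigr)\Vert_{L^2_\xi}.
\]
This holds for integer $n$ because $\langle x\rangle^{2n}$ is a polynomial comparable to $\sum_{|\gamma|\le n}|x^{\gamma}|^2$. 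It therefore suffices to bound $\partial_\xi^\gamma(e^{it|\xi|^4}\widehat f)$ for each $|\gamma|\le n$. A cleaner alternative is to use the commutator identity $x_j e^{it\Delta^2} = e^{it\Delta^2}(x_j + 4it\,\partial_j\Delta)$, which comes from $[x_j,\Delta^2] = -4\partial_j\Delta$. I would keep it in mind for organizing the iteration, but I will carry out the Fourier-side version.

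By the Leibniz rule, $\partial_\xi^\gamma(e^{it|\xi|^4}\widehat f)=\sum_{\gamma_1+\gamma_2=\gamma} c\,\partial^{\gamma_1}_\xi(e^{it|\xi|^4})\,\partial^{\gamma_2}_\xi\widehat f$. Since $|\xi|^4$ is a polynomial, induction (Faà di Bruno) gives
\[
\partial^{\gamma_1}_\xi e^{it|\xi|^4} = e^{it|\xi|^4}\,P_{\gamma_1}(t,\xi),\qquad |P_{\gamma_1}(t,\xi)|\lesssim \sum_{k=0}^{|\gamma_1|} |t|^{k}\,|\xi|^{3k-(|\gamma_1|-k)}\chi_{\{4k-|\gamma_1|\ge0\}}\lesssim \langle t\rangle^{|\gamma_1|}\langle\xi\rangle^{3|\gamma_1|}.
\]
Each factor $t$ comes with one $\partial(|\xi|^4)$ of order $|\xi|^3$, and further derivatives only lower the power of $|\xi|$. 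The powers stay nonnegative because $P_{\gamma_1}$ is a polynomial in $\xi$. Hence each term is bounded by
\[
\langle t\rangle^{n}\,\Vert \langle\xi\rangle^{3|\gamma_1|}\,\partial_\xi^{\gamma_2}\widehat f\Vert_{L^2},\qquad |\gamma_1|+|\gamma_2|=|\gamma|\le n .
\]

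The main step, and the only real obstacle, is the mixed terms in which both $\gamma_1$ and $\gamma_2$ are nonzero. For these we need
\[
\Vert \langle\xi\rangle^{3j}\partial_\xi^{\gamma_2}\widehat f\Vert_{L^2}\lesssim \Vert \langle \xi\rangle^{3n}\widehat f\Vert_{L^2}+\Vert \langle x\rangle^{n} f\Vert_{L^2},\qquad j+|\gamma_2|\le n.
\]
This is a weighted interpolation inequality between $\langle x\rangle^{n}f\in L^2$ and $J^{3n}f\in L^2$. I would first prove it in the form $\Vert J^{3j}(x^{\gamma_2} f)\Vert_{L^2}\lesssim \Vert J^{3n}f\Vert_{L^2}+\Vert\langle x\rangle^n f\Vert_{L^2}$, with exponents satisfying $\frac{3j}{3n}+\frac{|\gamma_2|}{n}\le1$.

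For the proof, commute $x^{\gamma_2}$ past $J^{3j}$; the commutators have lower order and are handled by induction on $|\gamma_2|$. Then apply the standard interpolation
\[
\Vert J^{\theta s}\langle x\rangle^{(1-\theta)n}g\Vert_{L^2}\lesssim \Vert J^{s}g\Vert_{L^2}+\Vert\langle x\rangle^{n}g\Vert_{L^2}.
\]
This can be obtained by complex interpolation, or by hand: integrate by parts in $\int |\xi|^{2a}|\partial^{b}\widehat f|^2$, moving derivatives from $\widehat f$ onto the weight and the conjugate, then use Cauchy--Schwarz and Young's inequality with exponents $\frac{n}{b}$ and $\frac{n}{3a}$. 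Since $3j+3|\gamma_2|\le 3n$ and the interpolation only needs $\frac{3j}{3n}+\frac{|\gamma_2|}{n}\le 1$, the estimate closes with room to spare. The constants are independent of $t$, because all $t$-dependence was extracted in the factor $\langle t\rangle^{n}$. This yields \eqref{weightedineq-biharmonic}.
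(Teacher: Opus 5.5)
Your proposal is correct and follows essentially the same route as the paper: Plancherel turns the weight into $\xi$-derivatives of $e^{it|\xi|^4}\widehat{f}$, and the Leibniz rule with the structure of $\partial^{\beta}_\xi e^{it|\xi|^4}$ (the paper's Proposition~\ref{Deriv}, your Fa\`a di Bruno bound) extracts $\langle t\rangle^{n}\langle \xi\rangle^{3k}$. The mixed terms $\langle\xi\rangle^{3k}\partial_\xi^{n-k}\widehat f$ are then absorbed into $\|J^{3n}f\|_{L^2}+\|\langle x\rangle^n f\|_{L^2}$ by the Nahas--Ponce interpolation (Proposition~\ref{interpolation}) and Young's inequality, exactly as in the paper; one harmless slip is that your unused commutator remark should read $x_j e^{it\Delta^2}=e^{it\Delta^2}(x_j-4it\,\partial_j\Delta)$.
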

The previous estimate is obtained through a frequency analysis of the operators $\{e^{it\Delta^2}\}$. By converting weighted estimates into frequency derivative estimates, we derive the desired bound for integer weights of arbitrary order (see Proposition \ref{Deriv} and the interpolation result in Proposition \ref{interpolation}). We also note that the corresponding estimate for fractional weights $0<n<1$ (with a different time bound) is a particular case of the result established in \cite[Theorem 1.1]{Pastor2023}, which is based on the ideas developed in \cite{Ponce2009}.
\smallskip

In Theorem \ref{theorem1}, we establish local well-posedness in a subspace of $H^M(\mathbb{R}^N)$. Recall that a Cauchy problem is said to be \emph{unconditionally well-posed} if one has existence, uniqueness, and continuous dependence of the solution map that hold directly in the space $H^s(\mathbb{R}^N)$ (see \cite[Definition~3.4]{Tao2006}). In this direction, using standard techniques that do not rely on dispersive effects, we establish the following result.  

\begin{theorem}\label{LWPresults}
Let $N\in \mathbb{Z}^{+}$, $s>\frac{N}{2}$, $\alpha \geq \lceil s \rceil$\footnote{Recall that the ceiling function $\lceil x \rceil$ is defined as the smallest integer greater than or equal to $x$.}, $\lambda\in \mathbb{C}$, and $a\in \mathbb{R}$. Then the Cauchy problem associated to \eqref{biNLS} is unconditionally well-posed in $H^s(\mathbb{R}^N)$.     
\end{theorem}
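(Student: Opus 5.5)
The plan is to treat \eqref{biNLS} as an ODE in the Banach algebra $H^s(\mathbb{R}^N)$, $s>\frac N2$, using only that the linear group is unitary. Write $S(t)=e^{-it(\Delta^2+2a\Delta)}$. Its symbol $e^{-it(|\xi|^4-2a|\xi|^2)}$ is unimodular, so $S(t)$ is a strongly continuous unitary group on every $H^s$. The Duhamel formulation is
\begin{equation*}
u(t)=S(t)u_0+i\lambda\int_0^t S(t-\tau)\big(|u|^{\alpha}u\big)(\tau)\,d\tau .
\end{equation*}
I would run a contraction argument in $C([-T,T];H^s)$ on the ball of radius $2\|u_0\|_{H^s}$. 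The time $T$ will depend only on $\|u_0\|_{H^s}$.

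The key step is a nonlinear estimate for $F(u)=|u|^\alpha u$, viewed as a function of $(u,\bar u)$. We need
\begin{align*}
\|F(u)\|_{H^s}&\lesssim \|u\|_{L^\infty}^{\alpha}\|u\|_{H^s},\\
\|F(u)-F(v)\|_{H^s}&\lesssim \big(\|u\|_{H^s}+\|v\|_{H^s}\big)^{\alpha}\|u-v\|_{H^s}.
\end{align*}
When $\alpha$ is an even integer, $F$ is a polynomial in $u,\bar u$ and both bounds follow from the algebra property of $H^s$. For general $\alpha\ge\lceil s\rceil$, $F\in C^{\lceil s\rceil}(\mathbb{C})$ with $|D^jF(z)|\lesssim |z|^{\alpha+1-j}$ for $j\le\lceil s\rceil$. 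The smoothness needed for the Lipschitz bound is one degree higher: $F(u)-F(v)=\int_0^1 DF(v+\theta(u-v))\,d\theta\,(u-v)$ requires $DF(w)\in H^s$, i.e.\ $DF\in C^{\lceil s\rceil}$, which needs $\alpha>\lceil s\rceil$. The borderline $\alpha=\lceil s\rceil$ is exactly where this must be handled more carefully. I would invoke the fractional chain rule and Kato--Ponce type estimates (Moser estimates for composition $G(u)$, $G\in C^{\lceil s\rceil}$, $G(0)=0$) for the size bound. I expect the difference estimate at the endpoint to be the main obstacle. There I would first try estimating the difference in $L^2$ or $H^{s-1}$, where one fewer derivative is needed. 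I would then combine this with the uniform $H^s$ bound and a Bona--Smith regularization to get continuity of the flow in $H^s$, rather than Lipschitz dependence.

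With these estimates, the contraction closes for $T\sim (1+\|u_0\|_{H^s})^{-\alpha}$ if the Lipschitz bound holds in $H^s$. Otherwise I would run the iteration in the weaker $C([-T,T];L^2)$ norm on the $H^s$-ball, which is complete by weak compactness. This gives existence, the blow-up alternative, and continuous dependence via Bona--Smith.

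Unconditional uniqueness means uniqueness among all solutions in $C([-T,T];H^s)$, with no auxiliary space. Let $u,v$ be two such solutions with the same data. Since $H^s\hookrightarrow L^\infty$ and $S(t)$ is unitary on $L^2$, the Duhamel formula gives
\begin{equation*}
\|u(t)-v(t)\|_{L^2}\lesssim \int_0^{|t|} \big(\|u\|_{L^\infty}+\|v\|_{L^\infty}\big)^{\alpha}\|u-v\|_{L^2}\,d\tau .
\end{equation*}
This uses only $|F(z)-F(w)|\lesssim (|z|+|w|)^\alpha|z-w|$. Gronwall then yields $u=v$. One should also check that any $C_tH^s$ distributional solution satisfies Duhamel's formula. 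This holds because $F(u)\in C_tH^s$ and $S(t)$ acts continuously on $H^{s-4}$.
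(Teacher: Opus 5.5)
Your architecture is the paper's: Duhamel formula with the unitary group $e^{-it(\Delta^2+2a\Delta)}$, a contraction in $C([-T,T];H^s)$ driven by an $H^s$ size bound and an $H^s$ Lipschitz bound for $|u|^{\alpha}u$ (Lemma \ref{nonlinearHS}), and uniqueness by an $L^2$ Gronwall argument. Citing Moser/fractional chain rule estimates for the size bound is a legitimate substitute for the paper's hand computation with $\mathcal{D}^\theta$.

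The gap is the difference estimate at $\alpha=\lceil s\rceil$. You leave it open because you believe it needs $DF\in C^{\lceil s\rceil}$, i.e.\ $\alpha>\lceil s\rceil$. That regularity count is off: on $H^{s_1+\theta}$ with $\theta\in(0,1)$, the fractional part only needs a Lipschitz modulus of $DF$.

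Indeed, for $|\beta|=s_1$, $\partial^\beta(F(u)-F(v))$ is a sum of terms $\big(D^jF(u)-D^jF(v)\big)\prod_i\partial^{\gamma_i}u$ and $D^jF(v)\big(\prod_i\partial^{\gamma_i}u-\prod_i\partial^{\gamma_i}v\big)$ with $j\le s_1$. Let $H$ be any function with $DH$ Lipschitz on a ball containing the ranges of $u$ and $v$, with sup and Lipschitz constant taken over that ball. Then the integral mean-value splitting used in the proof of \eqref{fracderivdiff} gives
\[
\mathcal{D}^\theta\big(H(u)-H(v)\big)(x)\lesssim \sup|DH|\,\mathcal{D}^\theta(u-v)(x)+\mathrm{Lip}(DH)\,\|u-v\|_{L^\infty}\big(\mathcal{D}^\theta u(x)+\mathcal{D}^\theta v(x)\big).
\]
Take $H=D^jF$. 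Then $DH=D^{j+1}F$ is smooth off the origin and homogeneous of degree $\alpha-j$. We have $\alpha-j\ge 1$ for all $j\le s_1$ precisely when $\alpha\ge s_1+1=\lceil s\rceil$. A function that is smooth off the origin and homogeneous of degree $\ge 1$ is locally Lipschitz. For integer $s$ one only needs $D^sF$ locally Lipschitz, i.e.\ $\alpha\ge s$.

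This is exactly what the hypothesis $\alpha\ge\lceil s\rceil$ is for in the paper. The counting claim in the proof of Lemma \ref{nonlinearHS} absorbs undifferentiated factors into $|u|^{\alpha-2p}$ so that Lemma \ref{GlemmafracderNON} applies with $\nu\ge 1$. For differences, at most $s_1$ of the $2p+1$ factors carry derivatives, so one can absorb enough undifferentiated factors to reach $\nu\ge 2$. That is the Lipschitz case of \eqref{fracderivdiff}.

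Your fallback does not close the gap by itself. First, the $H^s$-ball in $C([-T,T];H^s)$ is not complete for the $L^2$ metric. You would need to work in $L^\infty_TH^s$ and recover strong continuity afterwards from the Duhamel formula. More seriously, the Bona--Smith step needs a uniform-in-$n$ $H^s$ bound for $F(u_n)-F(u_n^{\epsilon})$. That is an $H^s$ difference estimate of the very type you were trying to avoid, and you do not provide it. So, as written, continuous dependence in $H^s$ at the endpoint is not established.

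With the corrected regularity count the fallback is unnecessary. The contraction closes directly in $C([-T,T];H^s)$ with Lipschitz dependence on the data, and your argument then coincides with the paper's.
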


The novelty of the above theorem lies in its application to fractional indexes $s> \frac{N}{2}$, which is particularly delicate due to the nonlinear term $|u|^{\alpha}u$ (see Lemma \ref{nonlinearHS} below). In its proof, we use the fractional derivative from Theorem \ref{stein} below and Lemmas \ref{lemmafracderNON} and \ref{GlemmafracderNON}, which are the key for having an explicit control of several nonlinear terms; this can also be useful for other dispersive models. 

\smallskip

The paper is organized as follows: in Section \ref{S:2} we review the notation and discuss the estimates for the bi-laplacian group $e^{it \Delta^2}$, in particular, weighted estimates in Lemma \ref{linearweightedspaces}.
In the following Section \ref{S:3} we state the linear and nonlinear estimates and prove Theorem \ref{theorem1} in Section \ref{S:proof}. We conclude this paper with 
the $H^s$ well-posedness in Section \ref{S:Hs}, including a key result  
for estimating fractional derivatives of nonlinearities Lemmas \ref{fracderivalphaD} and \ref{GlemmafracderNON}.

\subsection*{Notation} Given two positive quantities $A$ and $B$, $A\lesssim B$ means there is a constant $C>0$ such that $A\leq CB$. The notation $A\sim B$ means $A\lesssim B$ and $B\lesssim A$. The Fourier transform of a complex-valued function is denoted by $\widehat{f}$, which is defined as
$\widehat{f}(\xi)=\int_{\mathbb{R}^N}f(x) e^{-i \, x \cdot \xi}\, dx$.  
We denote by $f^{\vee}$ the inverse Fourier transform. Given an integer $k\geq 0$, $\{e^{it (-\Delta)^k}\}$ denotes the group of solutions associated with the linear equation $i\partial_t v+(-\Delta)^k v=0$. The nonhomogeneous fractional derivative of order $b\in\mathbb{R}$ is given by $J^bf(x)=(\langle \xi \rangle^b \hat{f} )^{\vee} (x)$. The homogeneous fractional derivative of order $b\in \mathbb{R}$ is defined via the Fourier transform by $D^bf(x)=(-\Delta)^{\frac{b}{2}}f(x)=( |\xi|^b\hat{f})^{\vee}(x)$. The space $H^s(\mathbb{R}^N)$ is the $L^2$-based Sobolev space with regularity $s\in \mathbb{R}$, equipped with the norm
$
\|f\|_{H^s} \equiv \|J^s f\|=\|\langle \xi \rangle^s \widehat{f}(\xi)\|_{L^2}.  
$

\subsection*{Acknowledgments} The research of all three authors was partially funded by the NSF grant DMS-2055130 (PI: S. Roudenko). Research of I.P. and S.R. was also partially supported by the NSF grant DMS-2452782.

\smallskip

\section{Preliminary estimates}
\label{S:2}

\subsection{Weighted estimates}
We, first, study the derivatives of the operator $e^{it|\xi|^4}$. This is a necessary step for the weighted estimates for the linear group $e^{it\Delta^2}$, which we obtain afterward. We note that the estimates in this section are valid for any dimension $N$.  

\begin{prop}\label{Deriv}
Let $\beta$ be a multi-index, $t \in \mathbb{R}$, and given an integer $l\geq 0$, $P_l(\xi)$ denotes a homogeneous polynomial of order $l$ with variables $\xi\in \R^N$. Then  
\begin{align}\label{derivative}
   \partial^{\beta}(e^{it|\xi|^4})= e^{it|\xi|^4}
    \sum_{l=0}^{\lfloor3|\beta|/4\rfloor} t^{|\beta| -l}P_{3|\beta|-4l}(\xi).
\end{align}
\end{prop}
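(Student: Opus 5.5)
We argue by induction on the order $|\beta|$, reading the formula as: for each multi-index $\beta$ and each $0\le l\le \lfloor 3|\beta|/4\rfloor$ there is a homogeneous polynomial $P_{3|\beta|-4l}$ of degree $3|\beta|-4l$ (depending on $\beta$ and $l$, possibly zero) for which the identity holds. The exponent pattern comes from two facts. First, $\partial_{\xi_j}(t|\xi|^4)=4t\xi_j|\xi|^2$, which adds one power of $t$ and raises the polynomial degree by $3$. Second, differentiating a polynomial factor adds no power of $t$ and lowers the degree by $1$. Relative to the leading term $t^{|\beta|}P_{3|\beta|}$, each step of the second kind lowers the $t$-power by one and the degree by four compared with a step of the first kind.

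For the base case $|\beta|=0$, the sum has the single term $l=0$ with $P_0=1$. For the inductive step, assume the formula for $\beta$ and compute $\partial_{\xi_j}$ of a typical term $e^{it|\xi|^4}t^{|\beta|-l}P_{3|\beta|-4l}(\xi)$ by the product rule. This produces two pieces:
\begin{equation*}
e^{it|\xi|^4}\,t^{|\beta|+1-l}\,\bigl(4\xi_j|\xi|^2P_{3|\beta|-4l}(\xi)\bigr)
\quad\text{and}\quad
e^{it|\xi|^4}\,t^{|\beta|-l}\,\partial_{\xi_j}P_{3|\beta|-4l}(\xi).
\end{equation*}
In the first piece, $4\xi_j|\xi|^2P_{3|\beta|-4l}$ is homogeneous of degree $3(|\beta|+1)-4l$, and the $t$-power is $(|\beta|+1)-l$. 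It therefore fits the pattern for $\beta+e_j$ with the same index $l$. In the second piece, $\partial_{\xi_j}P_{3|\beta|-4l}$ is homogeneous of degree $3|\beta|-4l-1=3(|\beta|+1)-4(l+1)$, and the $t$-power is $(|\beta|+1)-(l+1)$. It fits the pattern with index $l+1$. Grouping the terms with equal index $l'$ and summing the corresponding homogeneous polynomials of equal degree gives new polynomials $P_{3(|\beta|+1)-4l'}$.

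The only point needing care is the range of the index $l'$, which is where the floor function enters. The new index satisfies $l'\le \lfloor 3|\beta|/4\rfloor+1$. The second piece may produce $l'=l+1$ only when $\partial_{\xi_j}P_{3|\beta|-4l}\neq 0$, which requires $3|\beta|-4l\ge 1$. This gives $4(l+1)\le 3|\beta|+3=3(|\beta|+1)$, that is, $l'\le\lfloor 3(|\beta|+1)/4\rfloor$. If instead the degree $3|\beta|-4l$ is zero, the polynomial is constant and its derivative vanishes, so no new term appears. We use the convention that a homogeneous polynomial of negative degree is zero. Every term produced therefore has degree $3|\beta+e_j|-4l'\ge 0$ and index in the range $0\le l'\le\lfloor 3|\beta+e_j|/4\rfloor$.

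Finally, since $\partial^{\beta+e_j}=\partial_{\xi_j}\partial^\beta$ and every multi-index arises from $0$ by successively adding unit vectors, the induction covers all multi-indices $\beta$.
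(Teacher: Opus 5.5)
Your proof is correct and follows essentially the same route as the paper: induction on $|\beta|$, where $\partial_{\xi_j}$ of each term splits into a piece from differentiating the exponential (same index $l$) and a piece from differentiating the polynomial (index $l+1$). The paper controls the upper summation limit by a case split on $|\beta_1|$ modulo $4$, whereas you use the single inequality $3|\beta|-4l\geq 1\Rightarrow l+1\leq\lfloor 3(|\beta|+1)/4\rfloor$, which is cleaner; the only slip is that the first piece should carry the factor $4i$ rather than $4$, which is harmless since the polynomials may have complex coefficients.
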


\begin{proof}
The proof follows by induction on the magnitude of the multi-index $|\beta|\geq 1$. We consider several cases. 

\underline{Case $|\beta|=1$}. We have $\partial^{\beta}=\partial_{\xi_j}$, for some $j=1,\dots N$. A direct computation reveals 
\begin{equation*}
\partial_{\xi_j}(e^{it|\xi|^4})=4it|\xi|^2\xi_j e^{it|\xi|^4},
\end{equation*}
which is of the form \eqref{derivative}, when $|\beta|=1$.

\medskip
By inductive hypothesis, we assume that \eqref{derivative} is valid for any multi-index $\beta$ of order $k\geq 1$.  Thus, we proceed to study the case $|\beta|=k+1$.

\medskip
\underline{Case $|\beta|=k+1$}. We write $\partial^{\beta}=\partial_{\xi_j}\partial^{\beta_1}$ for some $j=1,\dots,N$. Given that $|\beta_1|\geq 1$, we invoke the induction hypothesis to get
\begin{align}\label{beta1}
\begin{split}
    \partial_{\xi}^{\beta}(e^{it|\xi|^4}) &= \partial_{\xi_j}\Big(e^{it|\xi|^4}\sum_{l=0}^{\lfloor3|\beta_1|/4\rfloor} t^{|\beta_1| -l}P_{3|\beta_1|-4l}(\xi)\Big)\\
    &= \partial_{\xi_j}\big(e^{it|\xi|^4}\big)\sum_{l=0}^{\lfloor3|\beta_1|/4\rfloor} t^{|\beta_1| -l}P_{3|\beta_1|-4l}(\xi)+e^{it|\xi|^4}\partial_{\xi_j}\Big(\sum_{l=0}^{\lfloor3|\beta_1|/4\rfloor} t^{|\beta_1| -l}P_{3|\beta_1|-4l}(\xi)\Big)\\
    &= e^{it|\xi|^4}\sum_{l=0}^{\lfloor3|\beta_1|/4\rfloor} t^{|\beta_1|+1 -l}\big(4i|\xi|^2\xi_jP_{3|\beta_1|-4l}(\xi)\big)+e^{it|\xi|^4}\partial_{\xi_j}\Big(\sum_{l=0}^{\lfloor3|\beta_1|/4\rfloor} t^{|\beta_1| -l}P_{3|\beta_1|-4l}(\xi)\Big)\\
    &=:\mathcal{A}_1+\mathcal{A}_2.
\end{split}
\end{align}
Noticing that $|\xi|^2\xi_jP_{3|\beta_1|-4l}(\xi)$ determines a homogeneous polynomial of order $3|\beta_1|+3-4l=3|\beta|-4l$ (recall that $|\beta|=1+|\beta_1|$), we set $Q_{3|\beta|-4l}(\xi) : =  4 i |\xi|^2 \xi_j P_{3|\beta_1|-4l}(\xi)$ to get
\begin{equation*}
\mathcal{A}_1=e^{it|\xi|^4}\sum_{l=0}^{\lfloor3(|\beta|-1)/4\rfloor} t^{|\beta| -l}Q_{3|\beta|-4l}(\xi),   
\end{equation*}
which is part of the summation in \eqref{derivative}. 

On the other hand, to estimate $\mathcal{A}_2$, we further divide our arguments into the cases: $|\beta_1|=4m$ for some $m\in \mathbb{Z}^{+}\cup\{0\}$, and $|\beta_1|=4m+r$ for some $m\in \mathbb{Z}^{+}\cup\{0\}$ and $r=1,2,3$. 

\medskip
\underline{Case $|\beta_1|=4m$ for some $m\in \mathbb{Z}^{+}\cup\{0\}$}. We note that
\begin{equation*}
 \lfloor 3|\beta_1|/4 \rfloor=3m=\lfloor 3|\beta|/4 \rfloor.
\end{equation*}
Since $3|\beta_1|-4l-1=3(|\beta_1|+1)-4(l+1)=3|\beta|-4(l+1)$, we have that $Q_{3|\beta|-4(l+1)}(\xi):=\partial_{\xi_j}P_{3|\beta_1|-4l}(\xi)$ is a homogeneous polynomial of degree $3|\beta|-4(l+1)$, whenever $l=0,\dots, \lfloor3|\beta_1|/4\rfloor-1$. Observe that if $l=3m$, $P_{3|\beta_1|-4l}(\xi)$ is a non-null constant polynomial. When differentiated, we omit it from the summation. Hence, we arrive at
\begin{equation*}
\begin{aligned}
 \mathcal{A}_2=e^{it|\xi|^4}\sum_{l=0}^{\lfloor3|\beta_1|/4\rfloor-1} t^{|\beta_1| -l} Q_{3|\beta|-4(l+1)}(\xi)=e^{it|\xi|^4}\sum_{l=1}^{\lfloor3|\beta|/4\rfloor} t^{|\beta| -l} Q_{3|\beta|-4l}(\xi).
\end{aligned}    
\end{equation*}
The above expression completes the deduction of \eqref{derivative} in this case.
\smallskip

\underline{Case $|\beta_1|=4m+r$ for some $m\in \mathbb{Z}^{+}\cup\{0\}$, $r=1,2,3$}. We have
\begin{equation*}
\lfloor 3|\beta|/4 \rfloor=\lfloor 3|\beta_1|/4 \rfloor+1.   
\end{equation*}
As a consequence, $Q_{3|\beta|-4(l+1)}(\xi):=\partial_{\xi_j}P_{3|\beta_1|-4l}(\xi)$ is a non-null homogeneous polynomial of degree $3|\beta|-4(l+1)$ for all $l=0,\dots, \lfloor3|\beta_1|/4\rfloor$. Hence, the argument in the previous case shows
\begin{equation*}
\begin{aligned}
 \mathcal{A}_2=e^{it|\xi|^4}\sum_{l=0}^{\lfloor3|\beta_1|/4\rfloor} t^{|\beta_1|} Q_{3|\beta|-4(l+1)}(\xi)=e^{it|\xi|^4}\sum_{l=1}^{\lfloor3|\beta_1|/4\rfloor+1} t^{|\beta| -l} Q_{3|\beta|-4l}(\xi).
\end{aligned}    
\end{equation*}
The above expression establishes that $\mathcal{A}_2$ is part of the summation in \eqref{derivative}.

Collecting the identities obtained for $\mathcal{A}_1$ and $\mathcal{A}_2$, we complete the induction step $|\beta|=k+1$. By the induction principle, the desired result follows.
\end{proof}

We next would like to prove a weighted estimate of the linear flow $e^{it\Delta^2} f$, i.e., Lemma \ref{linearweightedspaces},  though prior that we state a useful interpolation result, which we borrow from  \cite{Ponce2009}.
\begin{prop}(\cite[Lemma 4]{Ponce2009})\label{interpolation}
For any $a,b>0$, $x \in \mathbb R^N$, and $\theta\in (0,1)$, we have 
\begin{equation}
   \Vert  \langle x\rangle^{\theta a}(J^{(1-\theta)b}f)\Vert_{L^2}   \lesssim  \Vert J^b f\Vert_{L^2}^{1-\theta} \Vert  \langle x\rangle^{\theta }f\Vert_{L^2}^{\theta},
\end{equation}
\begin{equation}
   \Vert  J^{\theta a}(\langle x\rangle^{(1-\theta)b}f)\Vert_{L^2}   \lesssim  \Vert \langle x\rangle^b f\Vert_{L^2}^{1-\theta} \Vert  J^{a }f\Vert_{L^2}^{\theta}.
\end{equation}
\end{prop}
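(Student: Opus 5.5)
I read the last factor in the first inequality as $\Vert\langle x\rangle^{a}f\Vert_{L^2}^{\theta}$; with the exponents $\theta a$ and $(1-\theta)b$ on the left this is the homogeneous form. I would prove the first inequality by Stein's complex interpolation (the three lines lemma) and deduce the second from it via the Fourier transform. By density it suffices to take $f$ Schwartz with $f\neq 0$.

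For the first inequality, fix $g$ Schwartz with $\|g\|_{L^2}\le 1$ and consider, on the strip $0\le \Re z\le 1$,
\begin{equation*}
F(z)=e^{z^2}\,A^{z-\theta}\int_{\mathbb{R}^N} \langle x\rangle^{za}\,\big(J^{(1-z)b}f\big)(x)\,\overline{g(x)}\,dx, \qquad A=\frac{\Vert J^b f\Vert_{L^2}}{\Vert \langle x\rangle^{a}f\Vert_{L^2}}.
\end{equation*}
This function is analytic and of admissible growth in the strip; the factor $e^{z^2}$ is there to absorb polynomial growth in $\Im z$. The first inequality then follows from the three lines lemma, taking the supremum over $g$, once the two boundary bounds below are in place.

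On the line $z=iy$ the weight $\langle x\rangle^{iya}$ has modulus one. The operator $J^{-iyb}$ is a unitary Fourier multiplier, and $J^{(1-iy)b}f=J^{-iyb}J^bf$. Hence $|F(iy)|\le e^{-y^2}\,\|J^bf\|_{L^2}^{1-\theta}\,\|\langle x\rangle^{a}f\|_{L^2}^{\theta}$.

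On the line $z=1+iy$ we must bound $\|\langle x\rangle^{a}J^{-iyb}f\|_{L^2}$ by $C(1+|y|)^{K}\|\langle x\rangle^a f\|_{L^2}$. This weighted bound for the imaginary powers $J^{-iyb}$ is the main obstacle.

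\begin{itemize}
\item \emph{Integer $a$.} Pass to the Fourier side, where $\langle x\rangle^{a}$ corresponds to $\langle\nabla_\xi\rangle^{a}$ up to equivalence, i.e.\ to sums of $\partial_\xi^\gamma$ with $|\gamma|\le a$. By Leibniz, $\partial_\xi^\gamma\big(\langle\xi\rangle^{-iyb}\hat f\big)$ is a sum of terms $\partial^{\gamma_1}\langle\xi\rangle^{-iyb}\,\partial^{\gamma_2}\hat f$. Each derivative of $\langle\xi\rangle^{-iyb}$ produces a bounded symbol times at most $(1+|y|b)^{|\gamma_1|}$, since every derivative brings down a factor $iyb\,\xi_j\langle\xi\rangle^{-2}$ or a derivative of such a factor. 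This gives the bound with $K=a$.
\item \emph{Non-integer $a$.} First run the interpolation argument itself between the integer weights $\lfloor a\rfloor$ and $\lceil a\rceil$, using multiplication by $\langle x\rangle^{z}$. Equivalently, interpolate the operator $J^{-iyb}$ between the weighted spaces $L^2(\langle x\rangle^{2k}dx)$ for integers $k$. Either way the bound holds with polynomial growth in $y$.
\end{itemize}

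With this, $|F(1+iy)|\le C e^{1-y^2}(1+|y|)^{K}\,\|J^bf\|_{L^2}^{1-\theta}\|\langle x\rangle^{a}f\|_{L^2}^{\theta}$, and the three lines lemma at $z=\theta$ yields the first inequality.

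For the second inequality, apply Plancherel. The norm $\|J^{\theta a}(\langle x\rangle^{(1-\theta)b}f)\|_{L^2}$ is comparable to $\|\langle\xi\rangle^{\theta a}\langle\nabla_\xi\rangle^{(1-\theta)b}\hat f\|_{L^2}$, since $\langle x\rangle^{(1-\theta)b}$ becomes the operator $\langle\nabla_\xi\rangle^{(1-\theta)b}$ on the Fourier side. Likewise $\|\langle x\rangle^{b}f\|_{L^2}$ and $\|J^{a}f\|_{L^2}$ become $\|\langle\nabla_\xi\rangle^{b}\hat f\|_{L^2}$ and $\|\langle\xi\rangle^{a}\hat f\|_{L^2}$. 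Here $\langle\nabla_\xi\rangle^{s}=J^{s}$ acting in the $\xi$-variable, so the claim is exactly the first inequality applied to $\hat f$ with the roles of the parameters arranged accordingly. The only care needed is the comparability constants, which are absolute up to $2\pi$ scalings.
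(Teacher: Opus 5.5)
Your proposal is correct, and reading the last factor as $\Vert\langle x\rangle^{a}f\Vert_{L^2}^{\theta}$ is the right choice: the literal $\langle x\rangle^{\theta}$ version fails for far translates of a fixed bump whenever $a>\theta$. The paper gives no proof of its own and simply imports Lemma 4 of Nahas--Ponce, whose proof is the same three-lines interpolation you use. Your boundary bound for $J^{-iyb}$ on $L^2(\langle x\rangle^{2a}dx)$ is sound, and interpolating between integer weights in fact gives $K=a$. Your Fourier reduction of the second inequality to the first is exact, since $\widehat{\langle x\rangle^{s}f}=J^{s}_{\xi}\widehat{f}$, and it applies with the same $a,b,\theta$.
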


We are now ready to prove Lemma \ref{linearweightedspaces}.

\begin{proof}[Proof of Lemma \ref{linearweightedspaces}] By Plancherel's identity and Leibniz's rule,
\begin{align}\label{weighted-flow-start}
\begin{split}
\Vert  \langle x\rangle^{n}e^{it \Delta^2} f \Vert_{L^2} =&  \Vert  J_{\xi}^{n}(e^{it|\xi|^4}f)\Vert_{L^2} \\ 
  \lesssim & \Vert  f\Vert_{L^2} +\sum_{j=1}^{N} \Vert \partial_{{\xi}_j}^{n}(e^{it|\xi|^4}\hat{f})\Vert_{L^2}\\
  \lesssim & \Vert  f\Vert_{L^2} +\sum_{j=1}^{N}\sum_{k=0}^{n}\Vert  \partial_{{\xi}_j}^{k}(e^{it|\xi|^4})\, \partial_{{\xi}_j}^{n-k}\hat{f}\,
  \Vert_{L^2}.
\end{split}
\end{align}
To estimate the previous inequality, we apply Proposition \ref{Deriv} to get
\begin{equation*}
\begin{aligned}
\sum_{j=1}^{N}\sum_{k=0}^{n}\Vert  \partial_{{\xi}_j}^{k}(e^{it|\xi|^4})\partial_{{\xi}_j}^{n-k}\hat{f} \Vert_{L^2} \lesssim & \sum_{j=1}^{N} \sum_{k=0}^{n}\sum_{l=0}^{\lfloor3k/4\rfloor} |t|^{k -l}\|P_{3k-4l}(\xi)\,\partial_{{\xi}_j}^{n-k}\hat{f}
  \|_{L^2}\\ 
  \lesssim & \langle t\rangle^{n}\sum_{j=1}^{N}\sum_{k=0}^{n}\sum_{l=0}^{\lfloor3k/4\rfloor}\|\langle\xi \rangle^{3k 4l}\,\partial_{{\xi}_j}^{n-k}\hat{f}  \|_{L^2} .   
\end{aligned} 
\end{equation*}
Next, using that $\|\langle\xi \rangle^{3k-4l}\,\partial_{{\xi}_j}^{n-k}\hat{f}
  \|_{L^2} \lesssim \|J^{n-k}\big(\langle\xi \rangle^{3k-4l}\,\hat{f}\big)
  \|_{L^2}$, whenever $k\neq 0$, an application of Proposition \ref{interpolation} with $\theta=\frac{n-k}{n}$ yields
  \begin{equation*}
   \begin{aligned}
   \|J^{n-k}\big(\langle\xi \rangle^{3k-4l}\,\hat{f}\big)
  \|_{L^2}\lesssim & \|\langle\xi \rangle^{\big(\frac{3k-4l}{k}\big)n}\,\widehat{f}\|_{L^2}^{1-\theta}\,\|J^n\widehat{f}\|^{\theta}_{L^2}\\
  \lesssim & \|\langle\xi \rangle^{3n}\,\widehat{f}\|_{L^2}+\|J^n\widehat{f}\|_{L^2},
   \end{aligned}   
  \end{equation*}
 where we have used that $\big(\frac{3k-4l}{k}\big)n\leq 3n$ and Young's inequality. Going back to \eqref{weighted-flow-start}, Plancherel's identity yields
 \begin{align}\label{weighted-flow-final}
\begin{split}
 \Vert  \langle x\rangle^{n}e^{it\Delta^2}f\Vert_{L^2} \lesssim& \langle t\rangle^{n}(\|\langle\xi \rangle^{3n}\,\widehat{f}\|_{L^2}+\|J^n\widehat{f}\|_{L^2}) \\
 \lesssim& \langle t\rangle^{n}(\|J^{3n}f\|_{L^2}+\|\langle x\rangle^{n}f\|_{L^2}).
\end{split}
\end{align}
This completes the proof of \eqref{weightedineq-biharmonic} and finishes the lemma.
\end{proof} 
Finally, we recall an estimate for the linear equation associated with the standard linear Schr\"odinger equation $i\partial_t u +\Delta u=0$. For its proof, we refer the reader to \cite{Ponce2009,ARR2022}.

\begin{lemma}\label{Lineareqweight}
Let $N\in \mathbb{Z}^{+}$, $n\in \mathbb{R}^{+}$, then for any $t\in \mathbb{R}$ 
\begin{equation}\label{weightedineq-schrodinger}
\Vert  \langle x\rangle^{n}e^{it \Delta} f \Vert_{L^2(\mathbb{R}^N)}   \lesssim  \langle t\rangle^{n} (\Vert  J^{n}f\Vert_{L^2(\mathbb{R}^N)}+\Vert  \langle x\rangle^{n}f\Vert_{L^2(\mathbb{R}^N)}).
\end{equation}
\end{lemma}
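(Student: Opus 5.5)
Following the approach of Lemma \ref{linearweightedspaces}, the plan is to move the weight to the frequency side and control frequency derivatives of the multiplier $e^{-it|\xi|^2}$. (With the paper's conventions the group $e^{it\Delta}$ has symbol $e^{-it|\xi|^2}$; the sign plays no role below.) We first treat integer $n$ and then pass to real $n>0$ by interpolation.

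\textbf{Integer $n$.} By Plancherel,
\begin{equation*}
\|\langle x\rangle^n e^{it\Delta}f\|_{L^2}\sim\|J_\xi^n(e^{-it|\xi|^2}\widehat f)\|_{L^2}\lesssim \|f\|_{L^2}+\sum_{|\beta|=n}\|\partial_\xi^\beta(e^{-it|\xi|^2}\widehat f)\|_{L^2}.
\end{equation*}
We expand each term by the Leibniz rule. The analogue of Proposition \ref{Deriv}, proved by the same induction, gives
\begin{equation*}
\partial^\gamma e^{-it|\xi|^2}=e^{-it|\xi|^2}\sum_{l=0}^{\lfloor|\gamma|/2\rfloor}t^{|\gamma|-l}P_{|\gamma|-2l}(\xi).
\end{equation*}
This is simpler than the quartic case: each derivative either produces a factor $t\xi_j$ or lowers the polynomial degree by one. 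Every term is therefore bounded by
\begin{equation*}
\langle t\rangle^n\,\|\langle\xi\rangle^{k-2l}\,\partial^{\beta-\gamma}\widehat f\|_{L^2},\qquad k=|\gamma|.
\end{equation*}
Since $k-2l\le k$, each such term is controlled by $\|\langle\xi\rangle^{k}\partial^{n-k}\widehat f\|_{L^2}$.

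The key step is bounding this mixed term by the endpoints. By the second estimate of Proposition \ref{interpolation} with $\theta=(n-k)/n$, applied on the Fourier side, we get
\begin{equation*}
\|J^{n-k}(\langle\xi\rangle^{k}\widehat f)\|_{L^2}\lesssim\|\langle\xi\rangle^n\widehat f\|_{L^2}^{k/n}\,\|J^n\widehat f\|_{L^2}^{(n-k)/n}.
\end{equation*}
Commuting $\langle\xi\rangle^k$ past $\partial^{n-k}$ generates lower-order terms of the same type, which are handled inductively. Young's inequality then bounds the right-hand side by $\|J^nf\|_{L^2}+\|\langle x\rangle^nf\|_{L^2}$.

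\textbf{Real $n>0$.} Here the weight $\langle x\rangle^n$ is no longer a polynomial. We write $n=m+\sigma$ with $m\in\mathbb{Z}_{\ge0}$ and $0<\sigma<1$, and prefer a direct fractional argument over abstract interpolation. Using the commutation identity
\begin{equation*}
x_j e^{it\Delta}f=e^{it\Delta}(x_j+2it\partial_j)f
\end{equation*}
(up to the sign convention), we reduce to the integer bound for $m$ derivatives applied to $\langle x\rangle^\sigma$-weighted data. It then remains to prove the fractional bound
\begin{equation*}
\|\langle x\rangle^\sigma e^{it\Delta}g\|_{L^2}\lesssim\langle t\rangle^\sigma\big(\|J^\sigma g\|_{L^2}+\|\langle x\rangle^\sigma g\|_{L^2}\big).
\end{equation*}
This we would obtain by complex (Stein) interpolation of the analytic family $\langle x\rangle^{z}e^{it\Delta}\langle x\rangle^{-z}$ between $\operatorname{Re}z=0$ and $\operatorname{Re}z=1$, in the form used in \cite{Ponce2009}.

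\textbf{Main obstacle.} We expect the hard part to be the fractional case: specifically, absorbing the mixed terms, such as $|t|^\sigma\|J^{\sigma}$ applied to weighted functions$\|$, with constants uniform in $t$. Our first attempt would be to rescale time via the pseudo-conformal identity $e^{it\Delta}=M_tD_tFM_t$, whose factors are exact modulations and dilations, and then apply the interpolation of Proposition \ref{interpolation} to the resulting factors.
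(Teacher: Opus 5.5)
Your integer-$n$ argument is correct. It is the paper's proof of Lemma \ref{linearweightedspaces} with $|\xi|^4$ replaced by $|\xi|^2$, and your handling of the commutator $[\partial^{n-k},\langle\xi\rangle^k]$ is more careful than the paper's. The paper does not prove Lemma \ref{Lineareqweight} itself; it cites \cite{Ponce2009,ARR2022}. It only applies the lemma with $n\in\mathbb{N}$, so your integer argument covers every use in the paper.

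The gap is the non-integer case, which the statement asserts. The bound $\|\langle x\rangle^\sigma e^{it\Delta}g\|_{L^2}\lesssim\langle t\rangle^\sigma(\|J^\sigma g\|_{L^2}+\|\langle x\rangle^\sigma g\|_{L^2})$ for $0<\sigma<1$ is never proved, and neither route you propose works as stated.

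\emph{The Stein interpolation.} The family $\langle x\rangle^{z}e^{it\Delta}\langle x\rangle^{-z}$ cannot be interpolated on $L^2$, because its bound at $\operatorname{Re}z=1$ is false. Your commutation identity turns $\|x_je^{it\Delta}(\langle x\rangle^{-1}h)\|_{L^2}$ into $\|(x_j\pm 2it\partial_j)(\langle x\rangle^{-1}h)\|_{L^2}$. For $t\neq0$ and some $j$, this is infinite whenever $h\in L^2\setminus H^1$ is compactly supported. More decisively, the conclusion at $z=\sigma$ would be $\|\langle x\rangle^\sigma e^{it\Delta}g\|_{L^2}\le C(t)\|\langle x\rangle^\sigma g\|_{L^2}$. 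This fails for $g=e^{iKx_1}\phi$ as $K\to\infty$, since $e^{it\Delta}$ translates this datum by $2tK$. The right-hand factor must also carry derivatives, e.g.\ $(1-\Delta+|x|^2)^{-z/2}$. You would then still need the nontrivial equivalence $\|(1-\Delta+|x|^2)^{\sigma/2}g\|_{L^2}\lesssim\|J^\sigma g\|_{L^2}+\|\langle x\rangle^\sigma g\|_{L^2}$.

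\emph{The pseudo-conformal route.} This one is viable, since dilation gives exactly $\||x|^\sigma e^{it\Delta}f\|_{L^2}=|2t|^\sigma\|D^\sigma(e^{i|x|^2/4t}f)\|_{L^2}$. But it only moves the problem to a fractional derivative of a chirp-modulated function. Proposition \ref{interpolation} does not control that quantity, because it only trades powers of $\langle x\rangle$ against powers of $J$.

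The missing ingredient, used in \cite{Ponce2009}, is the pointwise bound
\[
\mathcal{D}^\sigma\big(e^{-it|\xi|^2}\big)(\xi)\lesssim|t|^{\sigma/2}+|t|^{\sigma}|\xi|^{\sigma},\qquad 0<\sigma<1.
\]
To prove it, split the integral defining $\mathcal{D}^\sigma$ at $|\xi-\eta|=\rho$ and use $|e^{-it|\xi|^2}-e^{-it|\eta|^2}|\le\min\{2,|t||\xi-\eta|(|\xi|+|\eta|)\}$. Take $\rho=|t|^{-1/2}$ when $|\xi|\le|t|^{-1/2}$, and $\rho=(|t||\xi|)^{-1}$ otherwise.

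With this bound the fractional case takes two lines on the Fourier side. By Plancherel and Theorem \ref{stein}, $\|\langle x\rangle^\sigma e^{it\Delta}g\|_{L^2}\sim\|\widehat g\|_{L^2}+\|\mathcal{D}^\sigma(e^{-it|\xi|^2}\widehat g)\|_{L^2}$. Then \eqref{Leibh}, applied with the unimodular factor, gives
\[
\|\mathcal{D}^\sigma(e^{-it|\xi|^2}\widehat g)\|_{L^2}\lesssim\|\mathcal{D}^\sigma(e^{-it|\xi|^2})\,\widehat g\|_{L^2}+\|\mathcal{D}^\sigma\widehat g\|_{L^2}\lesssim\langle t\rangle^\sigma\|J^\sigma g\|_{L^2}+\|\langle x\rangle^\sigma g\|_{L^2}.
\]

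For $n=m+\sigma$ you can then skip the $(x\pm2it\nabla)$ reduction. Apply $\mathcal{D}^\sigma$ to each term $e^{-it|\xi|^2}t^{k-l}P_{k-2l}(\xi)\,\partial^{\beta-\gamma}\widehat f$ of your Leibniz expansion with $|\beta|=m$, and use the same pointwise bound. Up to lower-order commutators, the resulting mixed terms are $\|J^{m-k}(\langle\xi\rangle^{k+\sigma}\widehat f)\|_{L^2}$ and $\|J^{\sigma+m-k}(\langle\xi\rangle^{k}\widehat f)\|_{L^2}$. Close them with Proposition \ref{interpolation}, exactly as in your integer argument.
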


\section{Local well-posedness in weighted space \texorpdfstring{$\mathcal X$}{X}}
\label{S:3}

\subsection{Linear estimates}

In this part, we obtain key linear and nonlinear estimates for our local well-posedness result, which we then use to prove Theorem \ref{theorem1}. 
\begin{lemma}\label{pure bi-laplacian}
Let $N\in\{1,2,3\}$, $a=0$. Consider $\psi \in \X_{N}$, where the space $\X_{1}$ is defined in \eqref{spaceX1-1D} and for $N=2,3$, the space $\X_N$\ is given by \eqref{spaceX1 2D}. Then, for all $t\in \R$ 
\begin{equation}
    \Vert e^{it\Delta^2}\psi\Vert_{\X_N}\lesssim\langle t\rangle^{n+1}\Vert\psi\Vert_{\X_N}
\end{equation}
and
\begin{equation}\label{diffsol1}
    \Vert \langle x\rangle^n(e^{it\Delta^2}\psi-\psi)\Vert_{L^{\infty}}\lesssim |t|\langle t\rangle^{n}\Vert\psi\Vert_{\X_N},
\end{equation}
where the implicit constants above are independent of $t$.
\end{lemma}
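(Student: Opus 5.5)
We estimate separately the three kinds of pieces of the $\X_N$ norm: the pure $L^2$ derivative pieces, the weighted $L^2$ derivative pieces, and the weighted $L^\infty$ pieces. The first two are handled directly by unitarity and Lemma \ref{linearweightedspaces}; the last is reduced to them by the fundamental theorem of calculus in time together with a weighted Sobolev embedding.

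\emph{High derivatives.} For $r+1\le|\beta|\le M$, $e^{it\Delta^2}$ commutes with $\partial^\beta$ and is unitary on $L^2$. Hence $\|\partial^\beta e^{it\Delta^2}\psi\|_{L^2}=\|\partial^\beta\psi\|_{L^2}$, with no loss in $t$.

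\emph{Weighted $L^2$ pieces.} Take $1\le|\beta|\le r$ in dimension one, or $2\le|\beta|\le r$ for $N=2,3$. Apply Lemma \ref{linearweightedspaces} to $f=\partial^\beta\psi$:
\[
\|\langle x\rangle^n\partial^\beta e^{it\Delta^2}\psi\|_{L^2}\lesssim\langle t\rangle^n\big(\|J^{3n}\partial^\beta\psi\|_{L^2}+\|\langle x\rangle^n\partial^\beta\psi\|_{L^2}\big).
\]
Since $|\beta|\ge1$, $\|J^{3n}\partial^\beta\psi\|_{L^2}$ is controlled by $\sum_{|\beta|\le|\gamma|\le|\beta|+3n}\|\partial^\gamma\psi\|_{L^2}$, which involves no zeroth-order term. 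Because $|\beta|+3n\le r+3n=M$, every such term is bounded either by a weighted $L^2$ piece (when $|\gamma|\le r$, using $\langle x\rangle^n\ge1$) or by a pure $L^2$ piece of $\|\psi\|_{\X_N}$.

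\emph{Weighted $L^\infty$ pieces via \eqref{diffsol1}.} Write
\[
e^{it\Delta^2}\psi-\psi=-i\int_0^t e^{is\Delta^2}\Delta^2\psi\,ds.
\]
Then
\[
\|\langle x\rangle^n(e^{it\Delta^2}\psi-\psi)\|_{L^\infty}\le|t|\sup_{|s|\le|t|}\|\langle x\rangle^n e^{is\Delta^2}\Delta^2\psi\|_{L^\infty}.
\]
Next use a weighted Sobolev embedding: for $N\le3$, $\|\langle x\rangle^n g\|_{L^\infty}\lesssim\sum_{|\gamma|\le2}\|\partial^\gamma(\langle x\rangle^n g)\|_{L^2}$, and the derivatives of the weight are bounded by $\langle x\rangle^n$. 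This reduces the estimate to $\|\langle x\rangle^n\partial^\gamma e^{is\Delta^2}\Delta^2\psi\|_{L^2}$ for $|\gamma|\le2$, which involves derivatives of $\psi$ of orders $4$ to $6$. These orders are at least $1$ (respectively $2$) and at most $7\le r$, and $r\ge7$ holds in all dimensions, even if $r=5$ is in fact too small when $N=1$; this needs a check. Applying the previous step with $M\ge 6+3n$ gives the bound $\langle t\rangle^n\|\psi\|_{\X_N}$, which is \eqref{diffsol1}.

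\emph{Completing the first estimate.} From \eqref{diffsol1},
\[
\|\langle x\rangle^n e^{it\Delta^2}\psi\|_{L^\infty}\le\|\langle x\rangle^n\psi\|_{L^\infty}+|t|\langle t\rangle^n\|\psi\|_{\X_N}\lesssim\langle t\rangle^{n+1}\|\psi\|_{\X_N},
\]
which is where the extra power $\langle t\rangle^{n+1}$ comes from. For $N=2,3$ the first-derivative piece $\langle x\rangle^n\partial_j e^{it\Delta^2}\psi$ in $L^\infty$ is handled the same way, applying the argument to $\partial_j\psi$. This uses derivatives up to order $7$, consistent with $r=\max\{7,3n+4\}$.

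\emph{Main obstacle.} The delicate point is the one-dimensional bookkeeping. With $r=\max\{5,3n+2\}$, the embedding $H^1(\R)\subset L^\infty$ needs only $|\gamma|\le1$, so the orders involved are $4$ and $5$; these must lie within the weighted range $1\le\beta\le r$. Second, one must check that $J^{3n}$ applied to them stays within $M=3n+r$, which holds since $5+3n\le M$. I expect the remaining work to be verifying these index inequalities carefully in each dimension.
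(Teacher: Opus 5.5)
Your proposal is correct and is essentially the paper's argument, which the paper writes out only for $a\neq0$. Both use the fundamental theorem of calculus in time, then the embedding $H^{m_N}\hookrightarrow L^\infty$ with $m_1=1$, $m_2=m_3=2$ after bounding derivatives of the weight by the weight, then Lemma \ref{linearweightedspaces} applied to $\partial^\sigma\psi$ with $4\le|\sigma|\le r$ and $|\sigma|+3n\le M$, and finally unitarity for $r+1\le|\beta|\le M$.

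One thing to clean up: your intermediate remark that $r\ge7$ in every dimension is false for $N=1$ (with $n=1$ one has $r=5$). Your final paragraph handles this correctly by using $H^1(\R)\hookrightarrow L^\infty$, which keeps the orders at $4,5\le r$.
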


\begin{lemma}\label{linear est l-laplacian}
Let $N\in\{1,2,3\}$, $a\in \mathbb{R} \setminus \{0\}$. Consider $\psi \in \X_{N}$, where the space $\X_{1}$ is defined as in \eqref{spaceX1-1D} and for $N=2,3$, the space $\X_N$\ is given by \eqref{spaceX1 2D}. Then, for all $t\in \R$ 
\begin{equation}
    \Vert e^{it(\Delta^2+a\Delta)}\psi\Vert_{\X_N}\lesssim\langle t\rangle^{2n+1}\Vert\psi\Vert_{\X_N}
\end{equation}
and
\begin{equation}\label{diffsol2}
    \Vert \langle x\rangle^n(e^{it(\Delta^2+a\Delta)}\psi-\psi)\Vert_{L^{\infty}}\lesssim |t|\langle t\rangle^{2n}\Vert\psi\Vert_{\X_N},
\end{equation}
where the implicit constants above are independent of $t$.
\end{lemma}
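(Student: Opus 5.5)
The plan is to reduce Lemma \ref{linear est l-laplacian} to the two separate groups. Since $\Delta^2$ and $a\Delta$ are Fourier multipliers, they commute, so we factor
\[
e^{it(\Delta^2+a\Delta)}=e^{it\Delta^2}\,e^{ita\Delta}.
\]
We then estimate each piece of the $\X_N$ norm by composing Lemma \ref{linearweightedspaces} (the $\Delta^2$ part) with Lemma \ref{Lineareqweight} (the $\Delta$ part, at time $at$). The unweighted pieces $\|\partial^\beta e^{it(\Delta^2+a\Delta)}\psi\|_{L^2}$ with $r+1\le|\beta|\le M$ are trivial by unitarity. For the weighted $L^2$ pieces with $1\le|\beta|\le r$ (respectively $2\le|\beta|\le r$), we commute $\partial^\beta$ with the group and apply Lemma \ref{linearweightedspaces} to $g=e^{ita\Delta}\partial^\beta\psi$. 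This gives
\[
\langle t\rangle^n\big(\|J^{3n}\partial^\beta\psi\|_{L^2}+\|\langle x\rangle^n e^{ita\Delta}\partial^\beta\psi\|_{L^2}\big).
\]
Lemma \ref{Lineareqweight} then bounds the last term by $\langle t\rangle^n(\|J^n\partial^\beta\psi\|_{L^2}+\|\langle x\rangle^n\partial^\beta\psi\|_{L^2})$, for a total factor $\langle t\rangle^{2n}$. The Sobolev terms $J^{3n}\partial^\beta\psi$ involve at most $|\beta|+3n\le r+3n=M$ derivatives, and all of them with order $\ge 1$. So they are controlled by the $L^2$ parts of $\|\psi\|_{\X_N}$, the weight $\langle x\rangle^n\ge1$ only helping. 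This is exactly why $M=3n+r$ was chosen.

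The $L^\infty$ components are the delicate part. These are $\langle x\rangle^n u$ in 1D, and $\langle x\rangle^n\partial^\beta u$ with $|\beta|\le1$ in $N=2,3$. We will not estimate $\langle x\rangle^n e^{it(\Delta^2+a\Delta)}\psi$ directly. Instead we write
\[
e^{it(\Delta^2+a\Delta)}\psi=\psi+\big(e^{it(\Delta^2+a\Delta)}\psi-\psi\big),
\]
so the first term is bounded by $\|\psi\|_{\X_N}$ and it suffices to prove \eqref{diffsol2}. Its $\partial^\beta$ analogue for $|\beta|=1$ in $N=2,3$ follows by the same argument with $\psi$ replaced by $\partial\psi$, using one more derivative, which is available since $r\ge 3n+4$.

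By Duhamel,
\[
e^{it L}\psi-\psi=i\int_0^t e^{isL}L\psi\,ds, \qquad L=\Delta^2+a\Delta,
\]
with $L$ taken with the sign matching the group. We then need a weighted $L^\infty$ bound for $e^{isL}L\psi$. In 1D we use the Sobolev-type inequality $\|\langle x\rangle^n g\|_{L^\infty}\lesssim\|\langle x\rangle^n g\|_{L^2}+\|\partial(\langle x\rangle^n g)\|_{L^2}\lesssim \|\langle x\rangle^n g\|_{L^2}+\|\langle x\rangle^n\partial g\|_{L^2}$, using $|\partial\langle x\rangle^n|\lesssim\langle x\rangle^n$. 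In $N=2,3$ we use the same with derivatives up to order $2$, via $H^2\hookrightarrow L^\infty$. We then apply the weighted $L^2$ bound from the previous paragraph to $g=\partial^\gamma L\psi$ with $|\gamma|\le 2$ (plus one more in the gradient case). These terms contain between $2$ and $7$ derivatives of $\psi$, the minimum being $2$ from $a\Delta$, so they lie inside the weighted range $|\beta|\le r$ since $r\ge 5$ (resp.\ $r\ge7$). Adding $3n$ more derivatives keeps us at order $\le M$. Integrating $\langle s\rangle^{2n}$ over $[0,t]$ gives $|t|\langle t\rangle^{2n}\|\psi\|_{\X_N}$, which is \eqref{diffsol2}. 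Combining it with the bound on $\psi$ gives the $L^\infty$ part of the norm with $\langle t\rangle^{2n+1}$, matching the first claimed estimate.

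The main obstacle is the derivative bookkeeping. One must check that every term produced, after adding up to $3n$ derivatives from Lemma \ref{linearweightedspaces} and $4$ from $L$ plus the Sobolev derivatives, stays within the orders $1$ to $M$ controlled by $\X_N$. One must also check that any term needing a weight carries at most $r$ derivatives. In 1D the weighted term $\|\langle x\rangle^n\partial^{\gamma}L\psi\|_{L^2}$ has order up to $5\le r$, fine. The $J^{3n}$ term never needs weights. The zero-derivative weighted $L^2$ norm never appears, because $L$ kills constants; this is crucial, since $\X_1$ does not control $\langle x\rangle^n\psi$ in $L^2$. The other point to verify is the 1D Sobolev step, where we must ensure the $L^2$ norm involves $\partial^\gamma L\psi$ with $\gamma$ of order $0$ or $1$ only. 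For $a=0$ the same scheme gives Lemma \ref{pure bi-laplacian} with one factor $\langle t\rangle^n$ fewer.
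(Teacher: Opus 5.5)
Your proposal is correct and follows essentially the same route as the paper: you factor $e^{it(\Delta^2+a\Delta)}=e^{it\Delta^2}e^{ita\Delta}$ and chain Lemma \ref{linearweightedspaces} with Lemma \ref{Lineareqweight} for the weighted $L^2$ pieces, use unitarity for the orders $r+1\le|\beta|\le M$, and treat the weighted $L^\infty$ pieces via the fundamental theorem of calculus together with $H^1\hookrightarrow L^\infty$ (resp.\ $H^2\hookrightarrow L^\infty$) and $|\partial^{\alpha}\langle x\rangle^n|\lesssim\langle x\rangle^n$, which yields \eqref{diffsol2} at the same time. Your derivative bookkeeping also matches the paper's constraints: orders from $2$ up to $5$ in 1D, or up to $7$ for the gradient in $N=2,3$, plus $3n$ more from $J^{3n}$, give the conditions $r\ge 5$, $r\ge 7$ and $M=3n+r$.
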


We only prove Lemma \ref{linear est l-laplacian}, since Lemma \ref{pure bi-laplacian} follows similar ideas, 
see also remarks in the proof for that case.

\begin{proof}[Proof of Lemma \ref{linear est l-laplacian}]
To simplify notation, we assume $t\geq 0$. The case $t<0$ follows the same arguments. 
We apply the Fundamental Theorem of Calculus to the linear flow of \eqref{biNLS} to get

\begin{equation}\label{FTC}
 e^{it(\Delta^2+2a\Delta)}\psi=\psi + i\int_0^{t}e^{is(\Delta^2+2a\Delta)} (\Delta^2+2a\Delta)\psi(s) \, ds. 
\end{equation}
Applying a derivative of order $\beta$ on both sides of \eqref{FTC} yields
\begin{equation}
\partial^{\beta}e^{it(\Delta^2+2a\Delta)}\psi=\partial^{\beta}\psi+i\int_0^{t}e^{is(\Delta^2+2a\Delta)} (\Delta^2+2a\Delta)\partial^{\beta}\psi(s) \,ds.   
\end{equation}

We next compute each norm in the definition of the space $\X_{N}$ for $N=1,2,3$ in \eqref{spaceX1-1D} and \eqref{spaceX1 2D}.
\smallskip

$\bullet$ \underline{\bf $L^{\infty}$-norm}.
For $\beta=0$, we have     \begin{align}
\Vert \langle x\rangle^ne^{it(\Delta^2+2a\Delta)}\psi\Vert_{L^{\infty}}&\lesssim\Vert \langle x\rangle^n\psi\Vert_{L^{\infty}}+\int_0^{t}\Vert \langle x\rangle^n e^{is(\Delta^2+2a\Delta)} (\Delta^2+2a\Delta)\psi (s)\Vert_{L^{\infty}} \, ds  \notag \\
&\lesssim \Vert \psi\Vert_{\X_{N}}+\int_0^{t}\Vert \langle x\rangle^n e^{is(\Delta^2+2a\Delta)}(\Delta^2+2a\Delta)\psi (s) \Vert_{L^{\infty}}\, ds. \label{linear-inf}   
\end{align}
Define $m_N$ by setting $m_1=1$, $m_2=m_3=2$.
To control the previous inequality, we apply Sobolev embedding $H^{m_N}(\mathbb{R}^{N})\hookrightarrow L^{\infty}(\mathbb{R}^N)$, whenever $N=1,2,3$. We also use a trivial estimate  $|\partial^{\alpha}(\langle x\rangle^{n})|\lesssim \langle x\rangle^{n}$ for any multi-index $\alpha$, and we bound the integrand in \eqref{linear-inf} as
\begin{equation*}
  \begin{split} 
      \Vert \langle x\rangle^n e^{is(\Delta^2+2a\Delta)}(\Delta^2+2a\Delta)\psi \Vert_{L^{\infty}}&\lesssim\sum_{|\sigma|=0}^{m_N}\Vert \partial^{\sigma}\Big(\langle x\rangle^n e^{is(\Delta^2+2a\Delta)}(\Delta^2+2a\Delta)\psi\Big)\Vert_{L^{2}}\\
      \lesssim \sum_{|\sigma|=4}^{m_N+4}\Vert \langle x\rangle^n &e^{is(\Delta^2+2a\Delta)}\partial^{\sigma}\psi \Vert_{L^2} + 2|a| \sum_{|\sigma|=2}^{m_N+2}\Vert \langle x\rangle^n e^{is(\Delta^2+2a\Delta)}\partial^{\sigma}\psi\Vert_{L^2}.
      \end{split}
  \end{equation*}  
Since $a \neq 0$, we rewrite $e^{is(\Delta^2+2a\Delta)}=e^{is \Delta^2}\cdot e^{2ias\Delta}$, $0<s\leq t$, and apply Lemma \ref{linearweightedspaces} for the term with the double Laplacian and  Lemma \ref{Lineareqweight} for the term with the single Laplacian to deduce
\begin{align*}
\Vert \langle x\rangle^n e^{is(\Delta^2+2a\Delta)}(\Delta^2+2a\Delta)\psi \Vert_{L^{\infty}}&\lesssim\langle s\rangle^n\sum_{|\sigma|=2}^{m_N+4}(\Vert J^{3n}(e^{2ias\Delta}\partial^{\sigma}\psi)\Vert_{L^2}+\Vert \langle x\rangle^n (e^{2ias\Delta}\partial^{\sigma}\psi)\Vert_{L^2})\\
&{ \lesssim\langle s\rangle^n\sum_{|\sigma|=2}^{m_N+4}\Big(\Vert J^{3n}\partial^{\sigma}\psi\Vert_{L^2}+\langle s \rangle^n(\Vert J^{n} \partial^{\sigma}\psi\Vert_{L^2}+\Vert \langle x\rangle^n \partial^{\sigma}\psi\Vert_{L^2})\Big)}\\
&{ \lesssim \langle s\rangle^{2n}\Big( \sum_{|\sigma|=2}^{3n+m_N+4}\Vert \partial^{\sigma}\psi\Vert_{L^2}+\sum_{|\sigma|=2}^{m_N+4}\Vert \langle x\rangle^n \partial^{\sigma}\psi\Vert_{L^2}\Big)}\\
&{ \lesssim \langle s\rangle^{2n}\Big( \sum_{|\sigma|=2}^{r}\Vert \langle x \rangle^n\partial^{\sigma}\psi\Vert_{L^2}+\sum_{|\sigma|=r}^{M}\Vert \partial^{\sigma}\psi\Vert_{L^2}\Big)}\\
    &\lesssim\langle s\rangle^{2n}\Vert \psi\Vert_{\X_{N}},
\end{align*}
{  where we have also used that $\|\partial^{\sigma}\psi\|_{L^2}\lesssim \|\langle x\rangle^n\partial^{\sigma}\psi\|_{L^2}$, whenever $2 \leq |\sigma|\leq r$}. Note that since $a \neq 0$, we had to apply Lemmas \ref{linearweightedspaces} and \ref{Lineareqweight}, which double the power of $\langle s \rangle$, hence, the power is $2n$ in the estimates of Lemma \ref{linear est l-laplacian} and only $n$ power in those of Lemma \ref{pure bi-laplacian}.  

Collecting the previous estimates, we obtain
\begin{equation}
\Vert \langle x\rangle^ne^{it(\Delta^2+2a\Delta)}\psi\Vert_{L^{\infty}} \lesssim \Vert\psi\Vert_{\X_{N}}+\langle t\rangle^{2n}\int_0^{t}\Vert  \psi\Vert_{\X_{N}} ds\lesssim\langle t\rangle^{2n+1}\Vert \psi\Vert_{\X_{N}}.
\end{equation}
In particular, the argument above establishes \eqref{diffsol2}. {  Observe that the previous estimate requires $r\geq m_N+4$}. For $|\beta|=1$ (which happens for dimension $N=2,3$), an analogous argument yields
\begin{equation*}
\Vert \langle x\rangle^n\partial^{\beta}\big( e^{it(\Delta^2+2a\Delta)}\psi\big)\Vert_{L^{\infty}}\lesssim\langle t\rangle^{2n+1}\Vert \psi\Vert_{\X_{N}}.
\end{equation*}
{  Note that, in view of the previous arguments, the above estimates require $r\geq m_N+5=7$, $N=2,3$}.

$\bullet$ \underline{\bf Weighted $L^{2}$-norm}. If $N=1$, consider $1\leq|\beta|\leq r$, and if $N=2,3$, take $2\leq|\beta|\leq r$. Applying Lemmas \ref{linearweightedspaces} and \ref{Lineareqweight}, and recalling that $M=3n+r$, we get
 { \begin{align*}
     \begin{split}
          \Vert\langle x\rangle^n e^{it(\Delta^2+2a\Delta)}\partial^{\beta}\psi\Vert_{L^{2}}&\lesssim  \langle t \rangle^{n}(\Vert J^{3n}\partial^{\beta}\psi\Vert_{L^2}+\Vert \langle x\rangle^n e^{2ia t\Delta}\partial^{\beta}\psi\Vert_{L^2})\\
&\lesssim \langle t \rangle^{n}(\sum_{|\beta|\leq |\sigma|\leq r}\Vert \langle x \rangle^n\partial^{\sigma}\psi\Vert_{L^2}+\sum_{r+1\leq |\sigma|\leq M}\Vert\partial^{\sigma}\psi\Vert_{L^2}+\Vert \langle x\rangle^n e^{2ia t\Delta}\partial^{\beta}\psi\Vert_{L^2})\\         &\lesssim \langle t \rangle^{n} \Big( \Vert \psi \Vert_{\X} +\langle t \rangle^{n}(\Vert J^{n}\partial^{\beta}\psi\Vert_{L^2}+\Vert \langle x\rangle^n \partial^{\beta}\psi\Vert_{L^2}) \Big)\\
& \lesssim \langle t \rangle^{2n}\Vert \psi\Vert_{\X_N}.
 \end{split}
\end{align*}
}
\smallskip

$\bullet$ \underline{\bf Unweighted $L^2$-norm}. Let $r+1\leq |\beta|\leq M$. Using the fact that $\{e^{it(\Delta^2+2a\Delta)}\}_{t\in \mathbb{R}}$ is a group of isometries in $H^s$-spaces, we get 
\begin{equation*}
\Vert \partial^{\beta}\big( e^{it\Delta^2+2a\Delta)}\psi\big)\Vert_{L^2}=\Vert  \partial^{\beta} \psi\Vert_{L^2}\lesssim \Vert \psi\Vert_{\X_{N}}.
\end{equation*}
Collecting all above estimates that compose the $\mathcal X_N$ norm yields the desired result. 
\end{proof}

\subsection{Nonlinear estimate}

In this part, we deduce nonlinear estimates for $|u|^{\alpha }u$ and $|u|^{\alpha }u-|v|^{\alpha }v$ in the space $\X_N$, $N=1,2,3$, whenever $u,v \in \X_N$ and satisfy a non-vanishing condition \eqref{inf cond u0}. The main result is the following:

\begin{lemma} \label{nonlinear est}
Let $\alpha > 0$ and $a \in \mathbb{R}$. For dimension $N = 1$, let $n, M, r$ be defined by \eqref{E:n-1d} and \eqref{SpaceDim1gen}, and for dimensions $N = 2, 3$, by \eqref{E:n-2D-3D} and \eqref{spaceX1 gen 2D}. Consequently, define the space $\X_N$ by \eqref{spaceX1-1D} for $N=1$, and by \eqref{spaceX1 2D} for $N=2,3$. Suppose that $u \in \X_N$ satisfies 
\begin{equation}\label{inf cond sol}
       \inf_{x \in \mathbb{R}^N}\left(\langle x \rangle^{n}|u(x)|\right) \geq \frac{1}{\eta} \quad \mbox{for ~~some~~} \eta>0.
\end{equation} 
Then $|u|^{\alpha}u\in \X_N$. Moreover, 
   \begin{equation}\label{nonlinear}
   \Vert |u|^{\alpha}u \Vert_{\X_N} \lesssim (1+\eta\Vert u\Vert_{\X_N})^{2M}\Vert u\Vert ^{\alpha + 1}_{\X_N}.
   \end{equation}  
Additionally, if $u,v\in \X_N$ and satisfy \eqref{inf cond sol}, then
\begin{equation}\label{nonlinear diffrerence}
\begin{aligned}  
       \Vert|u|^{\alpha}u-|v|^{\alpha}v\Vert_{\X_N} &\lesssim \mathcal{G}(\eta,\|u\|_{\X_N},\|v\|_{\X_N}) \Vert u-v\Vert_{\X_N},   
\end{aligned}
\end{equation}
where    
    \begin{equation}\label{nonlinearGfunc}
    \begin{aligned}  
    \mathcal{G}(\eta,\|u\|_{\X_N},\|v\|_{\X_N})=&\sum_{p=0}^M\Big(\big(\eta+\Vert u \Vert_{\X_N}+\Vert v\Vert_{\X_N}\big)^{|\alpha-2p-1|+2p+1}+\big(\eta+\Vert u \Vert_{\X_N}+\Vert v\Vert_{\X_N}\big)^{4p+\alpha}\Big).  
    \end{aligned}
    \end{equation}
\end{lemma}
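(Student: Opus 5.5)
The plan is to estimate each piece of the $\X_N$ norm of $F(u)=|u|^{\alpha}u$ separately, writing $F(u)=u^{\frac{\alpha+2}{2}}\bar u^{\frac{\alpha}{2}}$. Away from the origin $F$ is smooth, and the lower bound \eqref{inf cond sol} gives $|u(x)|\ge \eta^{-1}\langle x\rangle^{-n}>0$ pointwise. The key pointwise tool is the Fa\`a di Bruno formula for derivatives of $F(u)$:
\begin{equation*}
\partial^{\beta}F(u)=\sum_{p=1}^{|\beta|}\ \sum_{\beta_1+\dots+\beta_p=\beta} c\,\partial_z^{j}\partial_{\bar z}^{p-j}F(u)\prod_{i=1}^{p}\partial^{\beta_i}u^{(\pm)},
\end{equation*}
with $|\partial_z^{j}\partial_{\bar z}^{p-j}F(u)|\lesssim |u|^{\alpha+1-p}$. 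For $p\le\alpha+1$ this factor is bounded by $\|u\|_{L^\infty}^{\alpha+1-p}$. For $p>\alpha+1$ we use $|u|^{-1}\le \eta\langle x\rangle^{n}$, so $|u|^{\alpha+1-p}\le |u|^{\alpha}(\eta\langle x\rangle^n)^{p-1}$. This is where the powers of $\eta$, and hence the factor $(1+\eta\|u\|_{\X_N})^{2M}$, come from. The zeroth-order term is immediate: $\langle x\rangle^n|u|^{\alpha+1}\le \|u\|_{L^\infty}^{\alpha}\|\langle x\rangle^n u\|_{L^\infty}$.

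For the weighted $L^2$ pieces ($1\le|\beta|\le r$, resp.\ $2\le|\beta|\le r$) and the unweighted pieces ($r+1\le|\beta|\le M$), I place the factor with the highest derivative in $L^2$ with its weight. The lower-order factors go into weighted $L^\infty$. I obtain these from the Sobolev embedding $H^{m_N}\hookrightarrow L^\infty$ exactly as in the proof of Lemma \ref{linear est l-laplacian}, $\|\langle x\rangle^n\partial^{\gamma}u\|_{L^\infty}\lesssim\|u\|_{\X_N}$, valid when $|\gamma|+m_N\le r$.

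Each extra $\langle x\rangle^n$ produced by $|u|^{-1}$ is absorbed by a factor $\langle x\rangle^n\partial^{\beta_i}u$ or $\langle x\rangle^{n}u$ in $L^\infty$. Since at most $|\beta|$ factors appear, the total count of weights stays balanced and no net growth in $x$ remains. The factor $|u|^{\alpha}$ left over is bounded, and where decay is needed it is bounded by $\langle x\rangle^{-n\alpha}\|u\|^\alpha_{\X_N}$.

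The main obstacle, and the reason for splitting into cases, is the situation where all factors are low order but there are many of them, together with high derivatives ($|\beta|>r$) where the weighted $L^\infty$ bounds are not available. In the many-factors regime, for $N=2,3$, the term cannot simply be put in $L^2$ through an $L^2$ factor. The remedy is to use $\langle x\rangle^{-n\alpha}\in L^2$, which is Case 1 and is exactly where $n>N/(2\alpha)$ enters. In dimension one the weighted $L^2$ control of $\partial u$ itself is available in the norm, so one factor $\langle x\rangle^n\partial^{\beta_i}u$ can always be taken in $L^2$ and no condition on $n$ is needed.

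For $|\beta|>r$, I use that $M=3n+r$ with $r\ge 3n+m_N+2$. This guarantees that in any splitting $\beta=\sum\beta_i$ with $|\beta|\le M$, at most one index exceeds $r-m_N$. The remaining factors have order at most $M-(r-m_N)\le r-m_N$ and so lie in weighted $L^\infty$, and the single high one goes in unweighted $L^2$. The excess weights from $|u|^{-1}$ are compensated by these weighted $L^\infty$ factors.

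For the difference estimate \eqref{nonlinear diffrerence}, I write
\begin{equation*}
\partial^\beta(F(u)-F(v))=\int_0^1\frac{d}{d\theta}\partial^\beta F(w_\theta)\,d\theta,\qquad w_\theta=v+\theta(u-v).
\end{equation*}
I then repeat the Fa\`a di Bruno analysis, with one factor replaced by $\partial^{\gamma}(u-v)$ or by the difference of the coefficient functions $\partial^{p}F(u)-\partial^{p}F(v)$. The latter is bounded through $|u|^{\alpha-p}$-type powers.

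A subtlety is that $w_\theta$ need not satisfy \eqref{inf cond sol}. I would instead bound the coefficients directly by $|u|^{\alpha-p}+|v|^{\alpha-p}$ when $\alpha-p\ge0$. When $\alpha-p<0$, I use the mean value inequality for $|z|^{\alpha-p}$ along the segment, with the inverse powers controlled by $\eta\langle x\rangle^n$ via $|u|,|v|\ge\eta^{-1}\langle x\rangle^{-n}$. This produces the exponents $|\alpha-2p-1|+2p+1$ and $4p+\alpha$ in $\mathcal G$ after applying Young's inequality, and the rest of the counting is identical to the first part.
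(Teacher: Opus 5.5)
Your argument is structurally the paper's proof. Your Fa\`a di Bruno coefficients $\partial_z^j\partial_{\bar z}^{p-j}F(u)$ play the role of the factor $|u|^{\alpha-2p}$ in \eqref{B}. Negative powers become weights through $|u|^{-1}\le \eta\langle x\rangle^n$, as in \eqref{neg power}. Putting the top-order factor in $L^2$ and the rest in weighted $L^\infty$, with $\langle x\rangle^{-n\alpha}\in L^2$ when every factor is first order and $N=2,3$, is the paper's Cases 1--3. The estimate \eqref{nonlinear} goes through as you describe. There is one harmless slip: $r\ge 3n+m_N+2$ is false for $N=1$, where $r=\max\{5,3n+2\}$ (take $n=1$, so $r=5<6$). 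The two facts you actually use are that at most one factor has order $>r-m_N$ and that the others have order $\le r-m_N$. These need only $2(r-m_N+1)>M$ and $M-(r-m_N+1)\le r-m_N$, i.e.\ $r\ge 3n+2m_N-1$, which holds in all three dimensions.

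The step that fails as written is the coefficient-difference bound when $\alpha-p<0$. A mean value inequality along the segment from $v(x)$ to $u(x)$ in $\C$ needs a lower bound on $|z|$ along that segment. That segment is exactly $\{w_\theta(x)\}$, which you had just noted may reach the origin, and the endpoint bounds $|u|,|v|\ge\eta^{-1}\langle x\rangle^{-n}$ say nothing about it. For the same reason, drop the displayed $\theta$-integral in favour of the telescoping you go on to describe. The repair is the paper's device. Write each coefficient as $c\,|u|^{\mu}$ times a monomial in $u,\bar u$, and telescope the monomial. Then apply the mean value theorem only to the real function $t\mapsto t^{\mu}$ on the interval between $|u(x)|$ and $|v(x)|$, which does lie above $\eta^{-1}\langle x\rangle^{-n}$. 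This is \eqref{MVT}, and it is what leads to the paper's splitting into $\mathscrsfs{A}_1,\mathscrsfs{B}_1,\mathscrsfs{B}_2,\mathscrsfs{B}_3$. Alternatively, for $G$ homogeneous of degree $s<1$ you can prove $|G(u)-G(v)|\lesssim\min(|u|,|v|)^{s-1}|u-v|$ directly. Use the mean value inequality only when $|u-v|<\frac12\min(|u|,|v|)$, since the segment then stays away from $0$, and bound $|G(u)|+|G(v)|$ directly otherwise. With this fixed, every term you obtain is a power $X^e$ of $X=\eta+\|u\|_{\X_N}+\|v\|_{\X_N}$ with $\alpha\le e\le 4M+\alpha$. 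Each is therefore dominated by $X^{\alpha}+X^{4M+\alpha}\le\mathcal{G}$, and no Young's inequality is needed.
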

\begin{proof}
We first estimate $\partial^{\beta}(|u|^{\alpha}u)$, $1\leq |\beta |\leq M$. Applying Leibniz's rule, we get
\begin{align}\label{deriv}
\partial^{\beta}(|u|^{\alpha}u)
   & =|u|^{\alpha}\partial^{\beta} u + \sum^{|\beta|}_{|\gamma|=1}C_{\gamma}\partial^{\gamma} (|u|^{\alpha})\partial^{\beta-\gamma}u =:\mathscrsfs{A}+\mathscrsfs{B}.
   \end{align}
Writing $|u|^{\alpha}=(u\bar{u})^{\alpha/2}$, we obtain
\begin{equation}\label{B}
\mathscrsfs{B}=  
\sum^{|\beta|}_{|\gamma|=1}\sum^{|\gamma|}_{p=1}\sum_{\substack{\gamma_1+\dots+\gamma_p=\gamma,\\|\gamma_j|\geq1}}\sum_{\substack{\gamma_{1,j}+\gamma_{2,j}=\gamma_j,\\1\leq j\leq p}} c_{\beta,\gamma_{1,1},\dots,\gamma_{2,p}}|u|^{\alpha-2p}\,\,\partial^{\beta-\gamma} u \,\,\partial^{\gamma_{1,1}} u\,\,\partial^{\gamma_{2,1}}\bar{u}\dots\partial^{\gamma_{1,p}} u\,\,\partial^{\gamma_{2,p}} \bar{u},
    \end{equation}
for some constants $c_{\beta,\gamma_{1,1},\dots,\gamma_{2,p}}$. 
We are now ready to estimate each of the three parts of the $\X_N$-space norm: weighted and unweighted $L^{\infty}$-norm, weighted $L^2$-norm, and $H^M$-norm.
\smallskip

$\bullet$ \textbf{\underline{Weighted $L^{\infty}$-norm}}. 
Let a multi-index $\beta$ be such that $0\leq|\beta|\leq 1$ when the dimensions $N=2$ and $3$, and take $\beta=0$ in the dimension $N=1$ (i.e., we consider the function itself in dimensions $N=1,2,3$ and its first derivative in dimensions $N=2,3$). Then 
\begin{equation}
\Vert \langle x\rangle^{n}|u|^{\alpha}u\Vert_{L^{\infty}}=\Vert \langle x\rangle^{n\alpha} |u|^{\alpha}\langle x\rangle^{-n\alpha}\langle x\rangle^{n}  u \Vert_{L^{\infty}}=\Vert \langle x\rangle^{-n\alpha}\Vert_{L^{\infty}}\Vert\langle x\rangle^{n} u \Vert^{\alpha}_{L^{\infty}}\Vert \langle x\rangle^{n}  u \Vert_{L^{\infty}}
\leq  \Vert u\Vert_{\X_N}^{\alpha+1},
\end{equation}
and for $j=1,..,N$, we also bound one derivative $\partial_{x_j}$ with a weight as
\begin{align*} 
\Vert \langle x\rangle^{n}\partial_{x_j}(|u|^{\alpha}u)\Vert_{L^{\infty}}&=\Vert\langle x\rangle^{n}(\partial_{x_j}(|u|^{\alpha})u
+|u|^{\alpha}\partial_{x_j} u)\Vert_{L^{\infty}}
\\
&   \lesssim \Vert \langle x\rangle^{n}|u|^{\alpha}\partial_{x_j} u\Vert_{L^{\infty}}
\leq \Vert \langle x\rangle^{-n\alpha}\Vert_{L^{\infty}}\Vert\langle x\rangle^{n} u \Vert^{\alpha}_{L^{\infty}}\Vert \langle x\rangle^{n}  \partial_{x_j} u \Vert_{L^{\infty}}
\leq \Vert u\Vert_{\X_N}^{\alpha+1}.
\end{align*}

$\bullet$ \textbf{\underline{Weighted $L^{2}$-norm}}. Consider $1\leq |\beta|\leq r$ for $N=1$, and  $2\leq |\beta|\leq r$, when $N=2,3$. By an application of H\"older's inequality, we obtain
\begin{align*}
\Vert \langle x\rangle^n \mathscrsfs{A} \Vert_{L^{2}}
=\Vert \langle x\rangle^{n\alpha} |u|^{\alpha}\langle x\rangle^{-n\alpha}\langle x\rangle^{n} \partial^{\beta} u \Vert_{L^{2}}
\leq \Vert \langle x\rangle^n u \Vert_{L^{\infty}} ^{\alpha}\Vert \langle x\rangle^{n} \partial^{\beta} u \Vert_{L^{2}}
\leq \Vert u\Vert_{\X_N}^{\alpha+1}.
\end{align*}
Before we estimate the weighted term with $\mathscrsfs{B}$, we recall the infimum condition \eqref{inf cond sol} and apply it in the following manner
\begin{equation}\label{neg power}
|u|^{\alpha-2p}=|u|^{\alpha}|u|^{-2p}\leq\eta^{2p}\langle x\rangle^{2pn}|u|^{\alpha}.
\end{equation}
Then 
\begin{align}
\Vert &\langle x\rangle^{n}\mathscrsfs{B}\Vert_{L^2} \notag \\
&\lesssim\sum^{|\beta|}_{|\gamma|=1}\sum^{|\gamma|}_{p=1}\sum_{\substack{\gamma_1+\dots+\gamma_p=\gamma,\\|\gamma_j|\geq1}}\sum_{\substack{\gamma_{1,j}+\gamma_{2,j}=\gamma_j,\\1\leq j\leq p}}  \Vert \langle x\rangle^n|u|^{\alpha-2p}\,\,\partial^{\beta-\gamma} u \,\,\partial^{\gamma_{1,1}} u\,\,\partial^{\gamma_{2,1}}\bar{u}\dots\partial^{\gamma_{1,p}} u\,\,\partial^{\gamma_{2,p}} \bar{u}\Vert_{L^{2}} \notag \\
&\lesssim\sum^{|\beta|}_{|\gamma|=1}\sum^{|\gamma|}_{p=1}\sum_{\substack{\gamma_1+\dots+\gamma_p=\gamma,\\|\gamma_j|\geq1}}\sum_{\substack{\gamma_{1,j}+\gamma_{2,j}=\gamma_j,\\1\leq j\leq p}}\Vert \langle x\rangle^{n\alpha} |u|^{\alpha}\Vert_{L^{\infty}}\Vert \eta ^{2p} \langle x\rangle^{(-\alpha+2p+1)n}\partial^{\beta-\gamma} u \,\,\partial^{\gamma_{1,1}} u\dots\partial^{\gamma_{2,p}} \bar{u}\Vert_{L^2} \notag \\ &\lesssim\sum^{|\beta|}_{|\gamma|=1}\sum^{|\gamma|}_{p=1}\sum_{\substack{\gamma_1+\dots+\gamma_p=\gamma,\\|\gamma_j|\geq1}}\sum_{\substack{\gamma_{1,j}+\gamma_{2,j}=\gamma_j,\\1\leq j\leq p}} \eta ^{2p}\,\,\Vert u\Vert^{\alpha}_{\X_N}\,
\underbrace{\Vert\langle x\rangle^{-n\alpha}\langle x\rangle^{2pn+n}\,\,\partial^{\beta-\gamma} u \,\,\partial^{\gamma_{1,1}} u\dots\partial^{\gamma_{2,p}} \bar{u}\Vert_{L^2} }_{I_{\mathscrsfs{B}}}, \label{B L2 weight}
\end{align}
and thus, we need to obtain bounds for $I_{\mathscrsfs{B}}$.
For that we consider three cases.
\smallskip

\underline{Case 1}. We assume that all derivatives $\partial^{\beta-\gamma} u$, $\partial^{\gamma_{1,1}} u,\dots,\partial^{\gamma_{2,p}} \bar{u}$ are of order $\leq 1$. Thus, taking the $L^{\infty}$-norms of all terms with derivatives (and distributing the weights) except one term, which we have to put into $L^2$, we find
\begin{align*}
I_{\mathscrsfs{B}}
&\lesssim  \Vert \langle x\rangle^{-n\alpha}\Vert_{L^2}\Vert\langle x\rangle^{2pn+n}\,\,\partial^{\beta-\gamma} u \,\,\partial^{\gamma_{1,1}} u\dots\partial^{\gamma_{2,p}} \bar{u}\Vert_{L^{\infty}}
\\ &
\lesssim \Vert u\Vert^{2p+1}_{\X_N},    
\end{align*}
which requires $\langle x\rangle^{-n\alpha}\in L^2(\R^N)$, i.e., $n>\frac{N}{2\alpha}$, for $N=1,2,3$. This is the condition that we require in dimensions $N=2$ and $3$ in the definition \eqref{E:n-2D-3D} of the space $\mathcal X_N$. 

In the case of $N=1$, 
we do not have to put (all) first derivatives into $L^\infty$, per definition of $\mathcal X_1$, and thus, we bound $I_{\mathscrsfs{B}}$ slightly differently, avoiding the restriction on $n$. Note that since $|\gamma| \geq 1$, there is at least one term with the first derivative in this case, call it $\partial_{x_{j^*}}$, then 

\begin{align*}
I_{\mathscrsfs{B}}
&\lesssim  \Vert \langle x\rangle^{-n\alpha}\Vert_{L^\infty}
\Vert\langle x\rangle^{2pn}\,\underbrace{\partial^{\beta-\gamma} u \,\,\partial^{\gamma_{1,1}} u\dots\partial^{\gamma_{2,p}} \bar{u}}_{\mbox{except}~~\partial_{x_{j^*}}}\Vert_{L^{\infty}} \|\langle x \rangle^n \partial_{x_{j^*}} u\|_{L^2}
\\ &
\lesssim \Vert u\Vert^{2p+1}_{\X_N}.    
\end{align*}

\underline{Case 2}. Let us assume that $|\beta|\leq r-1$ if $N=1$, and $|\beta|\leq r-2$, for $N=2,3$, and there exists a term among $\partial^{\beta-\gamma} u$, $\partial^{\gamma_{1,1}} u$, $\dots$, $\partial^{\gamma_{2,p}} \bar{u}$ whose derivative is of order $>1$. Before continuing with the estimate, we observe that if $N=1$,  Sobolev embedding $H^{1}(\mathbb{R})\hookrightarrow L^{\infty}(\mathbb{R})$ establishes that for all integers $0\leq \widetilde{\gamma} \leq r-1$,
\begin{equation}\label{Sobembeddingineq1}
\begin{aligned}
    \Vert \langle x\rangle^{n} \partial^{\widetilde{\gamma}} u\Vert_{L^{\infty}} \lesssim & \Vert\langle x \rangle^{n}u\Vert_{L^{\infty}}+ \sum_{1 \leq \beta \leq r-1} \Vert\langle x \rangle^{n}\partial^{\beta} u\Vert_{L^{\infty}}\\
    \lesssim & \Vert\langle x \rangle^{n}u\Vert_{L^{\infty}}+ \sum_{1 \leq \beta \leq r} \Vert\langle x \rangle^{n}\partial^{\beta} u\Vert_{L^{2}}\\
    \lesssim & \|u\|_{\X_1}.
\end{aligned}
\end{equation}
When $N=2,3$, taking an arbitrary multi-index $|\widetilde{\gamma}|\leq r-2$, we can apply the embedding $H^{2}(\mathbb{R}^N)\hookrightarrow L^{\infty}(\mathbb{R}^N)$ to get 
\begin{equation}\label{Sobembeddingineq2}
\begin{aligned}
    \Vert \langle x\rangle^{n} \partial^{\widetilde{\gamma}}u\Vert_{L^{\infty}} \lesssim & \sum_{ |\beta|\leq 1}\Vert\langle x \rangle^{n}\partial^{\beta} u\Vert_{L^{\infty}} + \sum_{2 \leq |\beta| \leq r-2} \Vert\langle x \rangle^{n}\partial^{\beta}u\Vert_{L^{\infty}}\\
    \lesssim & \sum_{ |\beta| \leq 1}\Vert\langle x \rangle^{n}\partial^{\beta} u\Vert_{L^{\infty}} + \sum_{2 \leq |\beta| \leq r} \Vert\langle x \rangle^{n}\partial^{\beta}u\Vert_{L^{2}}\\
    \lesssim & \|u\|_{\X_N}.
\end{aligned}
\end{equation}
Returning to our estimate, for the sake of simplicity in the presentation, we assume that $|\gamma_{1,1}|$ is one of the derivatives of the highest order. Hence, an application of H\"older's inequality, \eqref{Sobembeddingineq1} and \eqref{Sobembeddingineq2}  establish
\begin{equation}\label{eqCase2}
    \begin{aligned}
     \Vert\langle x\rangle^{-n\alpha} &\langle x\rangle^{2pn+n}\,\,\partial^{\beta-\gamma} u \,\,\partial^{\gamma_{1,1}} u\dots\partial^{\gamma_{2,p}} \bar{u}\Vert_{L^2}\\
     &\lesssim \Vert \langle x\rangle^{-n\alpha} \Vert_{L^{\infty}}\Vert \langle x\rangle^{2pn+n} \underbrace{\partial^{\gamma_{1,1}} u}_{\text{ highest order}}\partial^{\beta-\gamma} u\,\,\partial^{\gamma_{1,2}} u\dots\partial^{\gamma_{2,p}} \bar{u}\Vert_{L^2} \\
     &\lesssim \Vert \langle x\rangle^n \partial^{\gamma_{1,1}} u\Vert_{L^2}\,\,\underbrace{ 
    \Vert \langle x\rangle^{n}\,\,\partial^{\beta-\gamma} u \Vert_{L^{\infty}}\,\,\Vert\langle x\rangle^{n}\,\,\partial^{\gamma_{1,2}} u\Vert_{L^{\infty}}\cdot \cdot \cdot  \Vert \langle x\rangle^{n}\,\, \partial^{\gamma_{2,p}} \bar{u}\Vert_{L^{\infty}}}_{2p \text{ times}}\\
    &\lesssim \Vert u \Vert_{\X_N}^{2p+1}.
    \end{aligned}
\end{equation}

\underline{Case 3}. We assume that $|\beta|>r-1$ if $N=1$, and $|\beta|> r-2$, when $N=2,3$, and there exists a term among $\partial^{\beta-\gamma} u$, $\partial^{\gamma_{1,1}} u$, $\dots$, $\partial^{\gamma_{2,p}} \bar{u}$ whose derivative is of order $>1$. For notational convenience, we assume again that $\gamma_{1,1}$ corresponds to the derivative with the highest order. 

If $|\gamma_{1,1}|\leq r-1$ when $N=1$, or $|\gamma_{1,1}|\leq r-2$ when $N=2,3$, then we fall back into Case 2 above. On the other hand, suppose $N=1$ and $|\gamma_{1,1}|> r-1$. By the definition of space $\X_1$, it must hold that $|\beta-\gamma|, |\gamma_{l_1,l_2}|\leq r-1$ for each $l_1=1,2$, $l_2=1,\dots, p$ with $(l_1,l_2)\neq (1,1)$. Otherwise, we would deduce $2r-2<r$, i.e., $r<2$, which is a contradiction to the definition of $r$. 

Similarly, for dimensions $N=2,3$, it must follow that $|\beta-\gamma|, |\gamma_{l_1,l_2}|\leq r-2$, $(l_1,l_2)\neq (1,1)$.  Consequently, we can repeat the argument in \eqref{eqCase2}, obtaining the same estimate.

In summary, the results in Cases 1, 2, and 3 give us
\begin{equation}\label{B weight}
  \Vert \langle x\rangle^{n}\mathscrsfs{B}\Vert_{L^2}\lesssim \sum^{|\beta|}_{p=1}(\eta \,\,\Vert u\Vert_{\X})^{2p} \,\,\Vert u\Vert_{\X_N}^{\alpha+1}.
\end{equation}

$\bullet$ \textbf{ Unweighted $L^2$-norm}. Given $r+1\leq |\beta|\leq M$, we proceed with the estimates of $\mathscrsfs{A}$ and $\mathscrsfs{B}$ in $L^2(\mathbb{R}^N)$. H\"{o}lder's inequality gives us
\begin{equation}\label{A L2}
    \Vert \mathscrsfs{A}\Vert_{L^2}=\Vert |u|^{\alpha}\partial^{\beta}u\Vert_{L^2}\leq \Vert u\Vert_{L^{\infty}}^{\alpha}\,\,\Vert \partial^{\beta} u\Vert_{L^2}\lesssim \Vert u\Vert_{\X_N}^{\alpha+1}.
\end{equation}
On the other hand, following \eqref{B L2 weight}, we obtain
\begin{align*}
\begin{split}
\Vert \mathscrsfs{B}\Vert_{L^2} \lesssim\sum^{|\beta|}_{|\gamma|=1}\sum^{|\gamma|}_{p=1}\sum_{\substack{\gamma_1+\dots+\gamma_p=\gamma,\\|\gamma_j|\geq1}}\sum_{\substack{\gamma_{1,j}+\gamma_{2,j}=\gamma_j,\\1\leq j\leq p}} \eta ^{2p}\,\,\Vert u\Vert^{\alpha}_{\X_N}\,\,\Vert\langle x\rangle^{-n\alpha}\langle x\rangle^{2pn}\,\,\partial^{\beta-\gamma} u \,\,\partial^{\gamma_{1,1}} u\dots\partial^{\gamma_{2,p}} \bar{u}\Vert_{L^2},
     \end{split}
 \end{align*}
and thus, we need to control $\Vert\langle x\rangle^{-n\alpha}\langle x\rangle^{2pn}\,\,\partial^{\beta-\gamma} u \,\,\partial^{\gamma_{1,1}} u\dots\partial^{\gamma_{2,p}} \bar{u}\Vert_{L^2}$. Once again, for simplicity, assume that $\gamma_{1,1}$ is a derivative of the highest order among $\partial^{\beta -\gamma} u$, $\partial^{\gamma_{1,1}} u,\dots,\partial^{\gamma_{2,p}} \bar{u}$. {  We first observe that if all the derivatives are of order less of equal $r-1$ in one dimension and of order less or equal $r-2$ in two and three dimensions, then the proof proceeds exactly as in previous cases. In particular, for dimensions $N=2,3$, we require that $n>\frac{N}{2\alpha}$.} Thus, it is only possible for $\gamma_{1,1}$ to have order $>r-1$ in dimension $N=1$, otherwise, we would have $2r-2<M$, that is, $r<3n+2$, which contradicts our choice of  $r$. Similarly, in dimensions $N=2,3$, only $\gamma_{1,1}$ can have order $>r-2$. This implies that if  $N=1$, all other derivatives among $\partial^{\beta-\gamma} u$, $\partial^{\gamma_{1,2}} u,\dots,\partial^{\gamma_{2,p}} \bar{u}$ must be of order $\leq r-1$, and when $N=2,3$, of order $\leq r-2$. As a consequence, by keeping the higher-order derivative within the $L^2$-norm and applying estimates \eqref{Sobembeddingineq1} and \eqref{Sobembeddingineq2}, we get
\begin{equation*}
\begin{aligned}
    \begin{aligned}
     \Vert\langle x\rangle^{-n\alpha} &\langle x\rangle^{2pn}\,\,\partial^{\beta-\gamma} u \,\,\partial^{\gamma_{1,1}} u\dots\partial^{\gamma_{2,p}} \bar{u}\Vert_{L^2}\\
     &\lesssim \Vert  \partial^{\gamma_{1,1}} u\Vert_{L^2}\,\, 
    \Vert \langle x\rangle^{n}\,\,\partial^{\beta-\gamma} u \Vert_{L^{\infty}}\,\,\Vert\langle x\rangle^{n}\,\,\partial^{\gamma_{1,2}} u\Vert_{L^{\infty}}\cdot \cdot \cdot  \Vert \langle x\rangle^{n}\,\, \partial^{\gamma_{2,p}} \bar{u}\Vert_{L^{\infty}}\\
    &\lesssim \Vert u \Vert_{\X_N}^{2p+1}.
    \end{aligned}  
\end{aligned}    
\end{equation*}
We conclude
\begin{equation*}
  \Vert \mathscrsfs{B}\Vert_{L^2}\lesssim \sum^{|\beta|}_{p=1}(\eta \,\,\Vert u\Vert_{\X})^{2p} \,\,\Vert u\Vert_{\X_N}^{\alpha+1},
\end{equation*}
which completes the study of the $H^M$-norm.

Gathering all the previous estimates, we arrive at
\begin{equation*}
    \Vert |u|^{\alpha}u\Vert_{\X}\lesssim\Vert u \Vert_{\X_N}^{\alpha+1}+\sum^{M}_{p=1}(\eta\Vert u\Vert_{\X_N})^{2p}\Vert u \Vert_{\X_N}^{\alpha+1}\lesssim (1+\eta \Vert u\Vert_{\X_N})^{2M}\Vert u \Vert_{\X_N}^{\alpha+1},
\end{equation*}
which shows \eqref{nonlinear}. 
\smallskip

{\bf Difference estimates.}

Next, we deduce \eqref{nonlinear diffrerence}. Let $u,v\in \X_N$ be such that both of them satisfy \eqref{inf cond sol} for the same $\eta >0$. Given $\theta\in \R$, using a mean value theorem and \eqref{inf cond sol}, we can bound the difference of powers of absolute values as follows
\begin{equation*}
||u|^{\theta}-|v|^{\theta}|\leq 
     \left\{
 \begin{aligned}
 &|\theta|\eta^{-(\theta-1)}\langle x\rangle^{-n(\theta-1)}|u-v|, \hspace{1.75in} \mbox{if} \quad\theta-1<0,\\
 &|\theta|(\max\{\Vert \langle x\rangle^{n}u\Vert_{L^{\infty}}^{\theta-1},\Vert \langle x\rangle^{n}v\Vert_{L^{\infty}}^{\theta-1}\})\langle x\rangle^{-n(\theta-1)}|u-v|,\quad\mbox{if}\quad\theta-1\geq 0. 
 \end{aligned}
 \right.
\end{equation*}
Re-writing it in a more uniform way, we have
\begin{equation}\label{MVT}
||u|^{\theta}-|v|^{\theta}| 
\leq |\theta|(\eta+\Vert u\Vert_{\X_N}+\Vert v\Vert_{\X_N})^{|\theta-1|}\langle x\rangle^{-n(\theta-1)}|u-v|.
\end{equation}

For any multi-index $\beta$, we write
\begin{equation}\label{difference}
    \partial^{\beta}(|u|^{\alpha}u-|v|^{\alpha}v)=:\mathscrsfs{A}_{1}+\mathscrsfs{B}_{1}+\mathscrsfs{B}_{2}+\mathscrsfs{B}_{3},
\end{equation}
where 
\begin{equation*}
 \begin{aligned}
     \mathscrsfs{A}_{1}&:=(|u|^{\alpha}-|v|^{\alpha})\partial^{\beta}u+|v|^{\alpha}\partial^{\beta}(u-v),\\
\mathscrsfs{B}_{1}&:=\sum^{|\beta|}_{|\gamma|=1}\sum^{|\gamma|}_{p=1}\sum_{\substack{\gamma_1+\dots+\gamma_p=\gamma,\\|\gamma_j|\geq1}}\sum_{\substack{\gamma_{1,j}+\gamma_{2,j}=\gamma_j,\\1\leq j\leq p}}\begin{pmatrix}
\beta\\
\gamma
\end{pmatrix}c_{\beta,\gamma_{1,1},\dots,\gamma_{2,p}}\\
&\hspace{2in}\times (|u|^{\alpha-2p}-|v|^{\alpha-2p})\,\,\partial^{\beta-\gamma} u \,\,\partial^{\gamma_{1,1}} u\partial^{\gamma_{2,1}}\overline{u}\dots \partial^{\gamma_{1,p}}u\,\partial^{\gamma_{2,p}} \bar{u}\\
\mathscrsfs{B}_{2}&:=\sum^{|\beta|}_{|\gamma|=1}\sum^{|\gamma|}_{p=1}\sum_{\substack{\gamma_1+\dots+\gamma_p=\gamma,\\|\gamma_j|\geq1}}\sum_{\substack{\gamma_{1,j}+\gamma_{2,j}=\gamma_j,\\1\leq j\leq p}}\begin{pmatrix}
\beta\\
\gamma
\end{pmatrix}c_{\beta,\gamma_{1,1},\dots,\gamma_{2,p}}\\
&\hspace{2in}\times |v|^{\alpha-2p}\,\,\partial^{\beta-\gamma}( u -v)\,\,\partial^{\gamma_{1,1}} u \partial^{\gamma_{2,1}} \overline{u}\dots \partial^{\gamma_{1,p}} u\, \partial^{\gamma_{2,p}} \bar{u}\\
\mathscrsfs{B}_{3}&:=\sum^{|\beta|}_{|\gamma|=1}\sum^{|\gamma|}_{p=1}\sum_{\substack{\gamma_1+\dots+\gamma_p=\gamma,\\|\gamma_j|\geq1}}\sum_{\substack{\gamma_{1,j}+\gamma_{2,j}=\gamma_j,\\1\leq j\leq p}}\sum_{l=1}^{p}\begin{pmatrix}
\beta\\
\gamma
\end{pmatrix}c_{\beta,\gamma_{1,1},\dots,\gamma_{2,p}}\\
&\hspace{2in}\times|v|^{\alpha-2p}\Big(\,\,\partial^{\beta-\gamma} v \,\,\partial^{\gamma_{1,1}} v\partial^{\gamma_{2,1}} \overline{v}\dots\partial^{\gamma_{1,l}}(u-v)\partial^{\gamma_{2,l}}\overline{u}\dots \partial^{\gamma_{1,p}} u\partial^{\gamma_{2,p}} \bar{u}\\
&\hspace{2.5in}+\partial^{\beta-\gamma} v \,\,\partial^{\gamma_{1,1}} v\partial^{\gamma_{2,1}}\overline{v}\dots \partial^{\gamma_{1,l}}v\partial^{\gamma_{2,l}}(\overline{u-v})\dots \partial^{\gamma_{1,p}}u\partial^{\gamma_{2,p}} \bar{u}\Big).
 \end{aligned}   
\end{equation*}
We now proceed with the study of $|u|^{\alpha}u-|v|^{\alpha}v$ in each of the norms in the definition of $\X_N$. 
\smallskip

$\bullet$ \underline{\bf Weighted \bf $L^{\infty}$-norm}. Consider $\beta=0$ if $N=1$, and let $0\leq|\beta|\leq 1$ for $N=2,3$. Applying \eqref{MVT} and definition of the $\X_N$ space, we get
\begin{equation*}\label{A1 inf}
    \begin{aligned}
        \Vert \langle x\rangle^n\mathscrsfs{A}_{1}\Vert_{L^{\infty}}
        &\lesssim \Vert \langle x\rangle^n(|u|^{\alpha}-|v|^{\alpha})\partial^{\beta}u\Vert_{L^{\infty}}
        +\Vert \langle x\rangle^n|v|^{\alpha}\partial^{\beta}(u-v)\Vert_{L^{\infty}}\\
        &\lesssim \big(\eta+\Vert u \Vert_{\X_N}+\Vert v\Vert_{\X_N}\big)^{|\alpha-1|}  \Vert\langle x\rangle^{-n(\alpha-1)}|u-v|\Vert_{L^{\infty}}\Vert \langle x\rangle^n\partial^{\beta}u\Vert_{L^{\infty}}
        +\Vert v\Vert_{\X_N}^{\alpha}\Vert u-v\Vert_{\X_N}\\
       & \lesssim \big(\eta+\Vert u \Vert_{\X_N}+\Vert v\Vert_{\X_N}\big)^{|\alpha-1|}\Vert u-v\Vert_{\X_N}\Vert u\Vert_{\X_N}
     +\Vert v\Vert_{\X_N}^{\alpha}\Vert u-v\Vert_{\X_N}.
         \end{aligned}
\end{equation*}
By \eqref{MVT}, it follows
\begin{equation*}
    \begin{aligned}
      \Vert \langle x\rangle^n\mathscrsfs{B}_{1}\Vert_{L^{\infty}}
        &\lesssim \sum^{|\beta|}_{|\gamma|=1}\sum^{|\gamma|}_{p=1}\sum_{\substack{\gamma_1+\dots+\gamma_p=\gamma,\\|\gamma_j|\geq1}}\sum_{\substack{\gamma_{1,j}+\gamma_{2,j}=\gamma_j,\\1\leq j\leq p}}
\Vert \langle x\rangle^n  (|u|^{\alpha-2p}-|v|^{\alpha-2p})\,\,\partial^{\beta-\gamma} u \,\,\partial^{\gamma_{1,1}} u\dots\partial^{\gamma_{2,p}} \bar{u}\Vert_{L^{\infty}}\\
&\lesssim\sum^{|\beta|}_{|\gamma|=1}\sum^{|\gamma|}_{p=1}\sum_{\substack{\gamma_1+\dots+\gamma_p=\gamma,\\|\gamma_j|\geq1}}\sum_{\substack{\gamma_{1,j}+\gamma_{2,j}=\gamma_j,\\1\leq j\leq p}}\big(\eta+\Vert u\Vert_{\X_N}+\Vert v\Vert_{\X_N}\big)^{|\alpha-2p-1|}\\
&\qquad\qquad\qquad\qquad\qquad\qquad\qquad\times\Vert \langle x\rangle^{-\alpha n+2pn+2n}\,\, |u-v|\,\,\partial^{\beta-\gamma} u \,\,\partial^{\gamma_{1,1}} u\dots\partial^{\gamma_{2,p}} \bar{u}\Vert_{L^{\infty}}\\
&\lesssim \sum^{|\beta|}_{p=1}\big(\eta+\Vert u\Vert_{\X_N}+\Vert v\Vert_{\X_N}\big)^{|\alpha-2p-1|}\Vert u\Vert_{\X_N}^{2p+1}\Vert u-v\Vert_{\X_N}.
    \end{aligned}
\end{equation*}
Similarly, using \eqref{neg power} we get
\begin{equation*}
\begin{aligned}
\Vert \langle x\rangle^{n} \mathscrsfs{B}_{2}\Vert_{L^{\infty}}
&\lesssim \sum^{|\beta|}_{p=1}\eta^{2p}\Vert v\Vert_{\X_N}^{\alpha}\Vert u-v\Vert_{\X_N}\Vert u\Vert_{\X_N}^{2p}\\
&\lesssim \sum^{|\beta|}_{p=1}\eta^{2p}(\Vert u\Vert_{\X_N}+\Vert v\Vert_{\X_N})^{2p}\Vert v\Vert_{\X_N}^{\alpha}\Vert u-v\Vert_{\X_N},
    \end{aligned}
\end{equation*}
and we deduce
\begin{equation*}
  \Vert \langle x\rangle^{n} \mathscrsfs{B}_{3}\Vert_{L^{\infty}} \lesssim  \sum^{|\beta|}_{p=1} \eta^{2p}(\Vert u\Vert_{\X_N}+\Vert v\Vert_{\X_N})^{2p}\Vert v\Vert_{\X_N}^{\alpha}\Vert u-v\Vert_{\X_N}.
\end{equation*}

$\bullet$ \underline{\bf Weighted $L^{2}$-norm}. Here, we take $1\leq |\beta|\leq r$ if $N=1$, and $2\leq |\beta|\leq r$ for $N=2,3$.  By \eqref{MVT}, H\"{o}lder's inequality and definition of the $\X_N$ space, we get
\begin{equation*}
    \begin{aligned}
        \Vert \langle x\rangle^n\mathscrsfs{A}_{1}\Vert_{L^{2}}
        &\lesssim \Vert \langle x\rangle^n(|u|^{\alpha}-|v|^{\alpha})\partial^{\beta}u\Vert_{L^{2}}
        +\Vert \langle x\rangle^n|v|^{\alpha}\partial^{\beta}(u-v)\Vert_{L^{2}}\\
        &\lesssim \big(\eta+\Vert u \Vert_{\X_N}+\Vert v\Vert_{\X_N}\big)^{|\alpha-1|}\Vert  \langle x\rangle^{-n(\alpha-1)}|u-v|\Vert_{L^{\infty}}\Vert \langle x\rangle^n\partial^{\beta}u\Vert_{L^{2}}
        +\Vert v\Vert_{\X_N}^{\alpha}\Vert u-v\Vert_{\X_N}\\
       & \lesssim \big(\eta+\Vert u \Vert_{\X_N}+\Vert v\Vert_{\X_N}\big)^{|\alpha-1|}\Vert u-v\Vert_{\X_N}\Vert u\Vert_{\X_N}
     +\Vert v\Vert_{\X_N}^{\alpha}\Vert u-v\Vert_{\X_N}.
         \end{aligned}
\end{equation*}
Now, using \eqref{MVT}, and the same arguments that lead to \eqref{B weight} (i.e., applying the same ideas as in the Cases 1, 2, and 3 above), we deduce
\begin{equation*}
\begin{aligned}
\Vert \langle x\rangle^n\mathscrsfs{B}_{1}\Vert_{L^{2}}
&\lesssim \sum^{|\beta|}_{|\gamma|=1}\sum^{|\gamma|}_{p=1}\sum_{\substack{\gamma_1+\dots+\gamma_p=\gamma,\\|\gamma_j|\geq1}}\sum_{\substack{\gamma_{1,j}+\gamma_{2,j}=\gamma_j,\\1\leq j\leq p}}
\Vert \langle x\rangle^n  (|u|^{\alpha-2p}-|v|^{\alpha-2p})\,\,\partial^{\beta-\gamma} u \,\,\partial^{\gamma_{1,1}} u\dots\partial^{\gamma_{2,p}} \bar{u}\Vert_{L^{2}}\\
&\lesssim\sum^{|\beta|}_{|\gamma|=1}\sum^{|\gamma|}_{p=1}\sum_{\substack{\gamma_1+\dots+\gamma_p=\gamma,\\|\gamma_j|\geq1}}\sum_{\substack{\gamma_{1,j}+\gamma_{2,j}=\gamma_j,\\1\leq j\leq p}}
\big(\eta+\Vert u\Vert_{\X_N}+\Vert v\Vert_{\X_N}\big)^{|\alpha-2p|}\\
&\hspace{2in}\times\Vert u-v\Vert_{\X_N}\Vert \langle x\rangle^{-\alpha n+2pn+n}\,\,\partial^{\beta-\gamma} u \,\,\partial^{\gamma_{1,1}} u\dots\partial^{\gamma_{2,p}} \bar{u}\Vert_{L^{2}}\\
&\lesssim \sum^{|\beta|}_{p=1}\big(\eta+\Vert u\Vert_{\X_N}+\Vert v\Vert_{\X_N}\big)^{|\alpha-2p-1|}\Vert u\Vert_{\X_N}^{2p+1}\Vert u-v\Vert_{\X_N}.
    \end{aligned}
\end{equation*}
Analogously, applying the ideas in the deduction of \eqref{B weight}, we find
\begin{equation*}
\begin{aligned}
\Vert \langle x\rangle^{n} \mathscrsfs{B}_{2}\Vert_{L^2}+ \Vert \langle x\rangle^{n} \mathscrsfs{B}_{3}\Vert_{L^{2}} \lesssim \sum^{|\beta|}_{p=1}\eta^{2p}(\Vert u\Vert_{\X_N}+\Vert v\Vert_{\X_N})^{2p}\Vert v\Vert_{\X_N}^{\alpha}\Vert u-v\Vert_{\X_N}.
    \end{aligned}
\end{equation*}


$\bullet$ \underline{\bf  Unweighted $L^2$-norm}. Taking $r \leq |\beta| \leq M$, we need to control the $L^2$-norm of $\mathscrsfs{A}_1$ and $\mathscrsfs{B}_j$, $j = 1, 2, 3$. These estimates follow from \eqref{MVT}, and the definitions of $r$ and $M$, which imply that only one derivative can have order greater than $r-1$ in dimension $N = 1$, and order less than $r-2$ in dimensions $N = 2, 3$. In summary, we have

 \begin{equation}\label{dif L2}
 \begin{aligned}
     \Vert \partial^{\beta}(|u|^{\alpha}u-|v|^{\alpha}v)\Vert_{L^{2}}
  & \lesssim \big(\eta+\Vert u \Vert_{\X_N}+\Vert v\Vert_{\X_N}\big)^{|\alpha-1|}\Vert u-v\Vert_{\X_N}\Vert u\Vert_{\X_N}
     +\Vert v\Vert_{\X_N}^{\alpha}\Vert u-v\Vert_{\X_N}\\
     &+\sum^{|\beta|}_{p=1}\Big(\big(\eta+\Vert u\Vert_{\X_N}+\Vert v\Vert_{\X_N}\big)^{|\alpha-2p-1|}\Vert u\Vert_{\X_N}^{2p+1}\Vert u-v\Vert_{\X_N}\\
     &\qquad\qquad\qquad+\eta^{2p}(\Vert u\Vert_{\X_N}+\Vert v\Vert_{\X_N})^{2p}\Vert v\Vert_{\X_N}^{\alpha}\Vert u-v\Vert_{\X_N}\Big).
\end{aligned}
\end{equation}
Using that $\eta\leq (\eta+\|u\|_{\X_N}+\|v\|_{\X_N})$ and $(\Vert u\Vert_{\X_N}+\Vert v\Vert_{\X_N})\leq (\eta+\Vert u\Vert_{\X_N}+\Vert v\Vert_{\X_N})$, we gather the previous estimates for $\mathscrsfs{A}_1$, $\mathscrsfs{B}_1$, $\mathscrsfs{B}_2$, and $\mathscrsfs{B}_3$ to complete the deduction of \eqref{nonlinear diffrerence}.  
\end{proof}


\subsection{Proof of Theorem \ref{theorem1}}
\label{S:proof}

In this part, we combine our linear and nonlinear estimates to deduce well-posedness results in the $\X_N$ space.

\begin{proof}[Proof of Theorem \ref{theorem1}] To deduce the existence of solutions, we apply the contraction mapping principle to the integral formulation of \eqref{biNLS}. Since the initial condition $u_0\in \mathcal{X}_N$ satisfies \eqref{inf cond u0}, we take $\eta>0$ such that
\begin{equation*}
  \inf_{x\in \mathbb{R}^N}\langle x\rangle^n |u_0(x)|\geq \frac{2}{\eta}.  
\end{equation*}
Let $K>0$ and $T>0$ be chosen later, and define space $\mathfrak{E}_{K,T}$ by
\begin{equation*}
    \mathfrak{E}_{K,T}=\left\{f\in C([-T,T];\X_N);\sup_{t\in[-T,T]}\Vert f(t)\Vert_{\X_N}\leq K, \quad \inf_{(x,t)\in \R^N \times [-T,T]}|\langle x\rangle^{n}f(x,t)|\geq\frac{1}{\eta}\right\}.
\end{equation*}
It is not hard to check that $ \mathfrak{E}_{K,T}$ equipped with distance function $d(u,v)=\sup_{t\in[-T,T]}\Vert u-v\Vert_{\X_N}$ is a complete metric space. Inspired by the integral formulation of \eqref{biNLS}, we consider the operator
\begin{equation*}
    \Phi(u(t))=e^{-it(\Delta^{2}+2a\Delta)}u_0+i\lambda\int_0^t e^{-i(t-\tau)(\Delta^2+2a\Delta)}|u|^{\alpha}u \,\,d\tau,
\end{equation*}
with $u\in \X_N$. Applying Lemma \ref{pure bi-laplacian} when $a=0$ or Lemma \ref{linear est l-laplacian} when $a\neq 0$, together with \eqref{nonlinear}, we bound for $|t| <T$
\begin{align}
\begin{split}\label{5.3}
\Vert \Phi (u(t))\Vert_{\X_N} &\leq c \langle t\rangle^{2n+1}\Vert u_0\Vert_{\X_N}+c|\lambda|\Big|\int_0^{t} \langle t-\tau\rangle^{2n+1}\Vert |u|^{\alpha}u (\tau)\Vert_{\X_N}\,\,d\tau\Big|\\
&\leq c \langle t\rangle^{2n+1}\Vert u_0\Vert_{\X_N}+c |\lambda| \sup_{t\in[-T,T]}\Big(|t|\langle t\rangle^{2n+1}(1+\eta\Vert u(t)\Vert_{\X_N})^{2M}\Vert u(t)\Vert_{\X_N}^{\alpha+1}\Big)\\
&\leq c \langle T\rangle^{2n+1}\Vert u_0\Vert_{\X_N}+c|\lambda|T\langle T\rangle^{2n+1}(1+\eta K)^{2M}\,K^{\alpha+1},
\end{split}
\end{align}
for some constant $c>0$. Setting $K=c\,2^{2n+2}\Vert u_0\Vert_{\X_N}$, and choosing  $0<T<1$ small such that
\begin{equation*}
   c|\lambda|T\langle T\rangle^{2n+1}(1+\eta K)^{2M}\,K^{\alpha+1}\leq c2^{2n+1}\Vert u_0\Vert_{\X_N}, 
\end{equation*}
we have $\sup\limits_{t\in[-T,T]}\Vert\Phi(u(t))\Vert_{\X_N}\leq K$, implying that  $\Phi(u)\in C([-T,T];\X_N)$. 

On the other hand, by Lemmas \ref{pure bi-laplacian}, \ref{linear est l-laplacian} and \ref{nonlinear est}, we obtain
\begin{equation*}
    \begin{aligned}
     \Vert \Phi (u(t))-\Phi(v(t))\Vert_{\X_N}\leq &  c|\lambda|\Big|\int_0^{t}\langle t-\tau \rangle^{2n+1}\Vert (|u|^{\alpha}u-|v|^{\alpha}v)\Vert_{\X_N}\,\,d\tau\Big|\\ 
     \leq & c|\lambda|T \langle T\rangle^{2n+1}\mathcal{G}(\eta,K,K)\sup_{t\in[-T,T]}\Vert u(t)-v(t)\Vert_{\X_N},    
    \end{aligned}
\end{equation*}
where $\mathcal{G}(\eta,\cdot,\cdot)$ is given by \eqref{nonlinearGfunc}. Thus, taking $T>0$ small such that 
\begin{equation*}
   c|\lambda|T\langle T\rangle^{2n+1}\mathcal{G}(\eta,K,K)\leq\frac{1}{2},
\end{equation*}
it follows 
\begin{equation}\label{5.7}
    \sup_{t\in[-T,T]}\Vert \Phi (u(t))-\Phi(v(t))\Vert_{\X_N}\leq\frac{1}{2}\sup_{t\in[-T,T]}\Vert u(t)-v(t)\Vert_{\X_N}.
\end{equation}
Next, we check the non-vanishing condition for $\Phi(u)$. From \eqref{diffsol1}, \eqref{diffsol2}, and \eqref{5.3}, there exists a constant $c>0$ such that
\begin{align*}
   \begin{split}
       |\langle x \rangle^{n}\Phi(u(t))(x)|\geq &|\langle x \rangle^{n}u_0|-\Vert \langle x \rangle^{n}\big(e^{-it(\Delta^{2}+2a\Delta)}u_0-u_0\big)\Vert_{L^{\infty}}
       -\Big|\int_0^{t}\Vert e^{-i(t-\tau)(\Delta^{2}+2a\Delta)} |u|^{\alpha}u\Vert_{\X_N}\,\,d\tau \Big|\\
       \geq & \frac{2}{\eta}-c|T|\langle T\rangle^{2n+1}K-c|\lambda|T\langle T \rangle^{2n+1}(1+\eta K)^{2M}K^{\alpha+1}.
   \end{split}
\end{align*}
Thus, taking $T>0$ small such that 
\begin{equation*}
 c|T|\langle T\rangle^{2n+1}K+c|\lambda|T\langle T \rangle^{2n+1}(1+\eta K)^{2M}K^{\alpha+1}\leq \frac{1}{\eta},
\end{equation*}
it follows that $|\langle x \rangle^{n}\Phi(u(t))(x)|\geq \frac{1}{\eta}$ for all $x\in \mathbb{R}^N$, and $t\in [-T,T]$.

Consequently,  the previous estimates show that there exists a time $0<T<1$ such that $\Phi:\mathfrak{E}_{K,T}\longrightarrow\mathfrak{E}_{K,T}$ define a contraction map on the complete metric space $\mathfrak{E}_{K,T}$. Accordingly, Banach fixed-point theorem assures the existence of a solution to the integral formulation of \eqref{biNLS}. This completes the existence part.
\smallskip

We next deduce the uniqueness of solutions. Suppose that there exist two solutions $u, v \in C([-T,T];\\ \X_N)$ of \eqref{biNLS} with the same initial condition $u_0\in \X_N$, where $u_0$ satisfies the infimum condition \eqref{inf cond u0}, and $u$ and $v$ satisfy \eqref{Xinf}. Fix $K_1>0$ such that 
$$
\sup_{t\in[-T,T]}(\Vert u(t)\Vert_{\X_N}+\Vert v(t)\Vert_{\X_N})\leq K_1,
$$
and let $\eta>0$ be such that
\begin{equation*}
   \inf_{(x,t)\in \R^N \times [-T,T]}|\langle x\rangle^{n}u(x,t)|\geq \frac{1}{\eta}\qquad \text{and} \qquad \inf_{(x,t)\in \R^N \times [-T,T]}|\langle x\rangle^{n}v(x,t)|\geq \frac{1}{\eta}.
\end{equation*}
Note that our regularity assumptions on space $\X_N$ assure that $u,v$ satisfy \eqref{biNLS} pointwise. Thus, it follows that $w=u-v$ solves the equation
\begin{equation}\label{w}
   i\partial_{t} {w} -\Delta^{2} {w}-2a\Delta w+\lambda(|u|^{\alpha}-|v|^{\alpha})u+\lambda|v|^{\alpha}(u-v)=0.
\end{equation}
Multiplying \eqref{w} by $\bar{w}$, integrating the resulting equation on space, and taking the imaginary part, gives us
\begin{equation}\label{5.10}
    \begin{aligned}
    \frac{1}{2}\frac{d}{dt}\int |w|^2\, dx&-\underbrace{\Im\int \Delta^{2} {w}\bar{w}\, dx}_{\mathscrsfs{I}_1}-\underbrace{\Im\int 2a \Delta {w}\bar{w}\, dx}_{\mathscrsfs{I}_2}\\
     &+\underbrace{\Im\int\lambda(|u|^{\alpha}-|v|^{\alpha})u\bar{w}\,dx}_{\mathscrsfs{I}_3}+\underbrace{\Im\int\lambda|v|^{\alpha}|w|^2\,dx}_{\mathscrsfs{I}_4}=0.
    \end{aligned}
\end{equation}
Integration by parts shows that $\mathscrsfs{I}_1$ and $\mathscrsfs{I}_2$ have null contribution to the identity above. Now, using \eqref{MVT}, there exists a constant $c>0$ such that 
\begin{equation*}
\begin{aligned}
    |\mathscrsfs{I}_3|&\leq c|\lambda||\alpha| \big(\eta+2K_1\big)^{|\alpha-1|} K_1\int |w|^2\,dx,
\end{aligned}
\end{equation*}
and an estimate for $\mathscrsfs{I}_4$ follows immediately. Summarizing, we obtain that for some positive constant $c>0$, 
\begin{equation*}
    \frac{1}{2} \frac{d}{dt}\int |w|^2\, dx\leq c|\lambda|\big(|\alpha| \big(\eta+2K_1\big)^{|\alpha-1|} K_1+K_1^{\alpha}\big)\int|w|^2\, dx.
\end{equation*}
Hence, Gronwall's inequality, and the fact that $w(0)=u(0)-v(0)=0$, gives us the uniqueness result. Finally, the continuity of the mapping data-to-solution of \eqref{biNLS} follows from the fact that the existence time of solution is a continuous function of the initial condition, and applying the same argument as in the deduction of \eqref{5.7} above. Thus, the proof of Theorem \ref{theorem1} is now complete.
\end{proof}

\section{Local well-posedness in \texorpdfstring{$H^s$}{Hs} spaces}

In this part, we focus on the deduction of unconditional well-posedness for the Cauchy problem associated to \eqref{biNLS}. We first present some preliminaries on fractional derivative estimates, and then describe how to obtain linear and nonlinear estimates that yield the proof of Theorem \ref{LWPresults}.

\subsection{Preliminaries on fractional derivatives}\label{S:Hs}

We will use the following fractional Leibniz rule estimate, the proof of which can be found in  \cite{GrafakosOh2014} and references therein.
\begin{lemma} \label{Grafakos-Oh}
Let $s>0$, $1<p<\infty$ and $p_1,p_2,q_1,q_2\in (1,\infty]$ be such that $\frac{1}{p}=\frac{1}{p_1} + \frac{1}{q_1} =\frac{1}{p_2} + \frac{1}{q_2} $. Then 
\begin{equation*}
   \|D^s(fg)\|_{L^{p}(\mathbb{R}^N)} \lesssim \|D^s f\|_{L^{p_1}(\mathbb{R}^N)} \|g\|_{L^{q_1}(\mathbb{R}^N)} + \|f\|_{L^{p_2}(\mathbb{R}^N)}\|D^s g\|_{L^{q_2}(\mathbb{R}^N)}.
\end{equation*}
\end{lemma}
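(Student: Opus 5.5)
The plan is the standard Littlewood--Paley paraproduct approach to the homogeneous Kato--Ponce inequality. Fix a smooth dyadic partition of unity $\sum_{j\in\mathbb{Z}}\varphi(2^{-j}\xi)=1$ on $\xi\neq 0$. Let $\Delta_j$ be the corresponding frequency projections and $S_j=\sum_{k\le j-3}\Delta_k$. For $f,g$ Schwartz (the general case then follows by density and Fatou), write Bony's decomposition
\begin{equation*}
fg=\sum_j S_j f\,\Delta_j g+\sum_j \Delta_j f\, S_j g+\sum_{|j-k|\le 2}\Delta_j f\,\Delta_k g=:\Pi_1+\Pi_2+\Pi_3 .
\end{equation*}
Then estimate $D^s\Pi_1$, $D^s\Pi_2$, $D^s\Pi_3$ separately in $L^p$. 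The pairing is that $D^s$ falls on the high-frequency factor: $g$ in $\Pi_1$, giving the term $\|f\|_{L^{p_2}}\|D^s g\|_{L^{q_2}}$, and $f$ in $\Pi_2$, giving $\|D^sf\|_{L^{p_1}}\|g\|_{L^{q_1}}$.

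For $\Pi_1$, the Fourier support of $S_jf\,\Delta_j g$ lies in an annulus $|\xi|\sim 2^j$. Hence $D^s(S_jf\,\Delta_jg)=2^{js}\widetilde\Delta_j(S_jf\,\Delta_j g)$, with $\widetilde\Delta_j$ a slightly enlarged projection. Writing $\Delta_j g=2^{-js}\Delta_j^{(s)}D^sg$, where $\Delta^{(s)}_j$ has symbol $|2^{-j}\xi|^{-s}\varphi(2^{-j}\xi)$ (smooth and compactly supported away from $0$), we get
\begin{equation*}
D^s\Pi_1=\sum_j\widetilde\Delta_j\big(S_jf\,\Delta_j^{(s)}D^sg\big).
\end{equation*}
This is a Coifman--Meyer bilinear multiplier applied to $(f,D^sg)$, whose symbol is smooth of the required homogeneity. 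The Coifman--Meyer theorem, or directly the square-function argument combining $\sup_j|S_jf|\lesssim Mf$, the Fefferman--Stein vector-valued maximal inequality, and Littlewood--Paley theory, gives $\|D^s\Pi_1\|_{L^p}\lesssim\|f\|_{L^{p_2}}\|D^sg\|_{L^{q_2}}$ for $1<p<\infty$ and $p_2,q_2\in(1,\infty]$. One must take care when $p_2=\infty$ or $q_2=\infty$. When $q_2=\infty$, the $L^\infty$ factor is the one carrying $\Delta_j^{(s)}D^sg$, so one cannot use a square function in $L^\infty$. Instead, use the duality/$BMO$ version, or run the estimate with the roles arranged so that the $L^\infty$ function appears only through maximal-type controls. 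The term $\Pi_2$ is symmetric.

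The main obstacle is $\Pi_3$, the high--high interaction. There the output frequency $|\xi|\lesssim 2^j$ can be much smaller than $2^j$, so $D^s$ no longer localizes. I would write
\begin{equation*}
D^s\Pi_3=\sum_{\ell}\Delta_\ell D^s\sum_{j\ge \ell-4}\Delta_jf\,\widetilde\Delta_j g
\end{equation*}
and put $D^s$ onto $f$, say, as $\Delta_j f=2^{-js}\Delta_j^{(s)}D^sf$. This leaves the factor $2^{(\ell-j)s}$, which is summable over $j\ge\ell-4$ precisely because $s>0$. That summability is where the hypothesis $s>0$ enters. After that, bound the $L^p$ norm by the Littlewood--Paley square function in $\ell$ and apply Minkowski in the geometric sum. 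Then use $\sup_j|\widetilde\Delta_jg|\lesssim Mg$ together with $(\sum_j|\Delta^{(s)}_jD^sf|^2)^{1/2}\in L^{p_1}$ and H\"older, giving $\|D^sf\|_{L^{p_1}}\|g\|_{L^{q_1}}$.

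The endpoint $q_1=\infty$ again needs only maximal-function control of $g$, which is harmless. The genuinely delicate endpoint is when the $L^\infty$ norm sits on the $D^s$ factor. I expect that to be the hardest step, and there I would follow Grafakos--Oh: either reduce via duality with $H^1$--$BMO$, or decompose the symbol into a rapidly convergent series of tensor products, to avoid square functions in $L^\infty$.
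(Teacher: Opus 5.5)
Your proposal is correct and is essentially the argument behind the paper's citation: the paper gives no proof of its own and simply quotes Grafakos--Oh, whose argument is this Bony paraproduct decomposition combined with the Coifman--Meyer theorem. One small remark is that the $\Pi_3$ endpoint with $D^s f\in L^\infty$ is not delicate. There you bound $\sup_j|\Delta_j^{(s)}D^sf|\lesssim\|D^sf\|_{L^\infty}$ pointwise and put the square function on $g$ instead. So the only genuine $L^\infty$ difficulty is the BMO-type paraproduct in $\Pi_1,\Pi_2$, which the Coifman--Meyer theorem you invoke already covers.
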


We also need some fractional derivative results. We denote by $L^{p}_{b}(\mathbb{R}^d)$ the fractional Sobolev space defined as $L^{p}_{b}(\mathbb{R}^N):=(1-\Delta)^{-\frac{b}{2}}L^{p}(\R^N)$. The proof of the following result can be found in \cite{Stein1961}.
\begin{theorem}\label{stein}
Let $b\in (0,1)$ and $\frac{2N}{N+2b}<p<\infty.$ Then $f\in L^{p}_{b}(\R^{N})$ if and only if
\begin{itemize}
		\item [a)] $f\in L^{p}(\R^{N}),$ 
		\item [b)]
		$\mathcal{D}^{b}f(x):={\displaystyle \left (
			\int_{\R^{N}}\frac{|f(x)-f(y)|^{2}}{|x-y|^{N+2b}}dy\right)^{\frac{1}{2}}} \in
		L^{p}(\R^{N}),$ with
		\begin{equation*}
		\|J^{b}f\|_{L^p}\sim \big(\|f\|_{L^p}+\|D^{b}f\|_{L^p}\big)\sim \|f\|_{L^p}+\|\mathcal{D}^{b}f\|_{L^p}.
\end{equation*}
In particular, when $p=2$, $\|D^b f\|_{L^2}\sim \|\mathcal{D}^bf\|_{L^2}$.
\end{itemize}
\end{theorem}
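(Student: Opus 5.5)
The plan is to treat the case $p=2$ separately by Plancherel and then handle general $p$ with vector-valued singular integral and Littlewood--Paley methods. For $p=2$, Fubini and Plancherel in the variable $x$ give, for each fixed $h$, $\int|f(x+h)-f(x)|^2dx=(2\pi)^{-N}\int|e^{ih\cdot\xi}-1|^2|\widehat f(\xi)|^2d\xi$. Integrating against $|h|^{-N-2b}dh$, we need $\int_{\mathbb R^N}|e^{ih\cdot\xi}-1|^2|h|^{-N-2b}dh=c_{N,b}|\xi|^{2b}$. This follows from rotation invariance and homogeneity after the substitution $h\mapsto h/|\xi|$. The constant is finite precisely because $0<b<1$: near $h=0$ one has $|e^{ih\cdot\xi}-1|^2\lesssim|h|^2$, and at infinity the integrand is $\lesssim|h|^{-N-2b}$. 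Hence $\|\mathcal D^bf\|_{L^2}^2=c\|D^bf\|_{L^2}^2$. Combined with $\langle\xi\rangle^{b}\sim 1+|\xi|^b$, this proves both equivalences when $p=2$. The equivalence $\|J^bf\|_{L^p}\sim\|f\|_{L^p}+\|D^bf\|_{L^p}$ for general $p$ follows from the Mikhlin multiplier theorem applied to $\langle\xi\rangle^b/(1+|\xi|^b)$ and its reciprocal. It therefore remains to compare $\|D^bf\|_{L^p}$ (or $\|J^bf\|_{L^p}$) with $\|f\|_{L^p}+\|\mathcal D^bf\|_{L^p}$.

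For the bound $\|\mathcal D^bf\|_{L^p}\lesssim\|J^bf\|_{L^p}$, I would write $f=G_b*g$ with $g=J^bf$, where $G_b$ is the Bessel kernel. I would then view $g\mapsto\{f(\cdot+h)-f(\cdot)\}_{h}$ as an operator from $L^p$ into $L^p(\mathbb R^N;\mathcal H)$, with $\mathcal H=L^2(|h|^{-N-2b}dh)$. Its kernel is $K(x)=\{G_b(x+h)-G_b(x)\}_h$. The $p=2$ computation gives boundedness on $L^2$. It then suffices to check the vector-valued Calder\'on--Zygmund conditions $\|K(x)\|_{\mathcal H}\lesssim|x|^{-N}$ and $\|\nabla K(x)\|_{\mathcal H}\lesssim|x|^{-N-1}$. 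These come from the bounds $|\nabla^jG_b(x)|\lesssim|x|^{b-N-j}$ near the origin and the exponential decay of $G_b$ at infinity, by splitting the $h$-integral into $|h|<|x|/2$ and $|h|\ge|x|/2$. Benedek--Calder\'on--Panzone then gives boundedness for all $1<p<\infty$, which covers the stated range.

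The converse, $\|D^bf\|_{L^p}\lesssim\|f\|_{L^p}+\|\mathcal D^bf\|_{L^p}$, is where I expect the main obstacle, and it is also where the lower restriction $p>\frac{2N}{N+2b}$ enters. I would use the Littlewood--Paley characterization $\|F\|_{L^p}\sim\|g_1(F)\|_{L^p}$ with $F=D^bf$, where $g_1(F)(x)=\big(\int_0^\infty t|\partial_tP_t^{(k)}F(x)|^2dt\big)^{1/2}$ for a suitable (high-order) Poisson-type kernel. Writing $t^{b}\partial_t^{m}P_tD^bf=t^{-1}\int\Psi_t(y)\,(f(x-y)-f(x))\,dy$, with $\Psi_t=t^{-N}\Psi(\cdot/t)$ and $\Psi$ of mean zero (which is what allows subtracting $f(x)$), Cauchy--Schwarz does not bound $g_1(F)$ by $\mathcal D^bf(x)$ pointwise. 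Instead it bounds it by an area-type (Stein's $g_\lambda^*$-type) average of the differences $|f(x-y-z)-f(x-z)|$ over a cone, with weight $(1+|z|/t)^{-N\lambda}$ and $\lambda=1+2b/N$. The remaining step is then to show that this $g^*_\lambda$-type expression is bounded on $L^p$ by $\|\mathcal D^bf\|_{L^p}$. This uses the standard fact that $g_\lambda^*$-type operators are controlled through the Hardy--Littlewood maximal function of $(\mathcal D^bf)^2$, or by duality for $p\le2$, exactly when $p>2/\lambda=\frac{2N}{N+2b}$. I would first try to push the maximal-function argument through for $p\ge2$. For $\frac{2N}{N+2b}<p<2$ I would rely on Stein's weighted-$L^1$ duality estimate for $g_\lambda^*$, tracking the exponent carefully, since that is where the threshold appears.
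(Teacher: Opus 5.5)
The paper does not prove this statement; it quotes it from Stein's 1961 note, so your proposal has to be judged on its own. The $p=2$ computation and the Mikhlin argument for $\|J^bf\|_{L^p}\sim\|f\|_{L^p}+\|D^bf\|_{L^p}$ are fine.

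The step $\|\mathcal{D}^bf\|_{L^p}\lesssim\|J^bf\|_{L^p}$ via Benedek--Calder\'on--Panzone, however, breaks down, and its conclusion (boundedness for all $1<p<\infty$) is false. First, $K(x)=\{G_b(x+h)-G_b(x)\}_h$ is not an element of $\mathcal H$. Since $G_b(z)\sim c\,|z|^{b-N}$ as $z\to 0$, near $h=-x$ the integrand $|G_b(x+h)-G_b(x)|^2|h|^{-N-2b}$ behaves like $|x|^{-N-2b}|x+h|^{2b-2N}$. This is not integrable when $b\le N/2$, i.e.\ for every $N\ge 2$ and for $N=1$, $b\le\frac12$. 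Your split into $|h|<|x|/2$ and $|h|\ge|x|/2$ hides this, because the singular point $h=-x$ lies in the second region.

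Second, no repair can work below the threshold. Take $0\ne f\in C_c^\infty$ with $\operatorname{supp} f\subset B_R$. For $|x|>2R$,
\[
(\mathcal{D}^bf(x))^2=\int_{B_R}|f(y)|^2|x-y|^{-N-2b}\,dy\ge (2|x|)^{-N-2b}\|f\|_{L^2}^2 .
\]
Hence $\mathcal{D}^bf\notin L^p$ for $p\le\frac{2N}{N+2b}$, even though $J^bf\in L^p$. So the restriction $p>\frac{2N}{N+2b}$ is forced precisely in this direction, not in the converse as you assert. The Calder\'on--Zygmund route only goes through when $b>N/2$ (so $N=1$, $b>\frac12$), which is exactly when $\frac{2N}{N+2b}<1$ and the restriction is empty.

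The tool you mention, a $g^*_\lambda$-function with $\lambda=1+\frac{2b}{N}$ bounded on $L^p$ for $p>2/\lambda$, is the right one, but it belongs in this direction. In Stein's argument $\mathcal{D}^bf$ is dominated pointwise by a $g^*_\lambda$-function of $D^bf$ built from its Poisson extension, and the $L^p$ bound for $g^*_\lambda$ is what produces the threshold.

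The direction you expected to be hard is elementary and holds for all $1<p<\infty$. Take $\widehat{\Phi}$ smooth and supported in an annulus, and write $\Phi_t=t^{-N}\Phi(\cdot/t)$. Then $\Phi_t*D^bf=t^{-b}\Psi_t*f$ with $\widehat{\Psi}=|\xi|^b\widehat{\Phi}$, so $\Psi$ is Schwartz with mean zero. Apply Cauchy--Schwarz against the measure $|\Psi_t(y)|\,dy$, not against $|y|^{-N-2b}dy$:
\[
\int_0^\infty|\Phi_t*D^bf(x)|^2\,\frac{dt}{t}\le\|\Psi\|_{L^1}\int_{\R^N}|f(x-y)-f(x)|^2\Big(\int_0^\infty t^{-2b}|\Psi_t(y)|\,\frac{dt}{t}\Big)dy\lesssim\big(\mathcal{D}^bf(x)\big)^2 .
\]
By scaling, the inner integral is $|y|^{-N-2b}$ times a bounded function of $y/|y|$. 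The Littlewood--Paley lower bound then gives $\|D^bf\|_{L^p}\lesssim\|\mathcal{D}^bf\|_{L^p}$.

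So your claim that no pointwise bound of this type is available is also incorrect. It holds even for your Poisson $g_1$: there the corresponding $\Psi$ decays like $|y|^{-N-1-b}$, and $1+b>2b$.
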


As a consequence of the previous result, we can apply Fubini's theorem to obtain 
\begin{equation}\label{Leib}
\|\mathcal{D}^{b}(fg)\|_{L^2(\R^N)} \leq \|f\mathcal{D}^{b}g\|_{L^2(\R^N)} + \|g\mathcal{D}^{b}f\|_{L^2(\R^N)}.
\end{equation}
The following estimates are valid for the appropriate functions $h$ and $f$,
\begin{equation}\label{Lei}
\mathcal{D}^b h(x)\lesssim \|h\|_{L^{\infty}(\R^N)}+\|\nabla h\|_{L^\infty(\R^N)}
\end{equation}
and
\begin{equation}\label{Leibh}
\mathcal{D}^b(fh)(x)\lesssim |f(x)|\mathcal{D}^b(h)(x)+\|h\|_{L^{\infty}}\mathcal{D}^b(f)(x).
\end{equation}

We next derive the estimates of the fractional derivative of the nonlinear term $|u|^{\alpha}u$.
\begin{lemma}\label{lemmafracderNON}
Let $\alpha>0$ and $0<\theta<1$. It follows
 \begin{equation}\label{fracderivalpha}
  \mathcal{D}^{\theta}\big(|u|^{\alpha} u\big)(x)\lesssim \|u\|_{L^{\infty}}^{\alpha}\mathcal{D}^{\theta}(u)(x).
 \end{equation}   
Moreover, 
 \begin{equation}\label{fracderivalphaD}
 \begin{aligned}
   \mathcal{D}^{\theta}\big(|u|^{\alpha} u-|v|^{\alpha} v\big)(x) \lesssim &(\|u\|_{L^{\infty}}^{\alpha}+\|v\|_{L^{\infty}}^{\alpha}\big)\mathcal{D}^{\theta}(u-v)(x)\\
&+\left\{ \begin{aligned}
&\ F_{\theta}(x)\|u-v\|_{L^{\infty}}^{\alpha},  \hspace{1.37in} \text{if $0<\alpha <1$}, \\
&\big(\|u\|_{L^{\infty}}^{\alpha-1}+\|v\|_{L^{\infty}}^{\alpha-1}\big) F_{\theta}(x)\|u-v\|_{L^{\infty}}, \hspace{0.25in} \text{if } \alpha \geq 1,   
\end{aligned}\right.       
 \end{aligned}
 \end{equation} 
where
\begin{equation*}
  F_{\theta}(x)=\mathcal{D}^{\theta}(u)(x)+\mathcal{D}^{\theta}(v)(x).
\end{equation*}
\end{lemma}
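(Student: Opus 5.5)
The plan is to work pointwise inside the integral defining $\mathcal{D}^{\theta}$ and reduce everything to elementary inequalities for the map $G(z)=|z|^{\alpha}z$ on $\C$. For \eqref{fracderivalpha}, the key fact is the standard bound
\begin{equation*}
\big||z_1|^{\alpha}z_1-|z_2|^{\alpha}z_2\big|\lesssim \big(|z_1|^{\alpha}+|z_2|^{\alpha}\big)|z_1-z_2|, \qquad z_1,z_2\in\C .
\end{equation*}
It follows from the mean value theorem applied along the segment joining $z_1$ and $z_2$, since $|\nabla G(z)|\lesssim |z|^{\alpha}$ and $\max_{s\in[0,1]}|sz_1+(1-s)z_2|^{\alpha}\lesssim |z_1|^{\alpha}+|z_2|^{\alpha}$. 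Taking $z_1=u(x)$ and $z_2=u(y)$ gives $|G(u(x))-G(u(y))|\le 2\|u\|_{L^\infty}^{\alpha}|u(x)-u(y)|$. Inserting this into the definition of $\mathcal{D}^{\theta}$ yields \eqref{fracderivalpha} immediately.

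For the difference estimate, set $w=u-v$ and write
\begin{equation*}
G(u(x))-G(v(x))=\int_0^1 \Big(\partial_z G\big(v+s w\big)\,w+\partial_{\bar z}G\big(v+sw\big)\,\bar w\Big)(x)\,ds .
\end{equation*}
Denote $H_s=\partial_zG(v+sw)$ and $\widetilde H_s=\partial_{\bar z}G(v+sw)$. These are homogeneous of degree $\alpha$, satisfy $|H_s|,|\widetilde H_s|\lesssim \|u\|_{L^\infty}^{\alpha}+\|v\|_{L^\infty}^{\alpha}$, and $\partial G$ is Hölder continuous of order $\min\{\alpha,1\}$ away from scaling issues. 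The difference at the points $x$ and $y$ is then split as
\begin{equation*}
H_s(x)w(x)-H_s(y)w(y)=H_s(x)\big(w(x)-w(y)\big)+\big(H_s(x)-H_s(y)\big)w(y),
\end{equation*}
and similarly for the $\widetilde H_s$ term. The first piece contributes $(\|u\|_{L^\infty}^{\alpha}+\|v\|_{L^\infty}^{\alpha})\mathcal{D}^{\theta}(u-v)(x)$ after Minkowski's inequality in $s$. The second piece is bounded by $\|w\|_{L^\infty}\,|H_s(x)-H_s(y)|$.

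The remaining task is to estimate $|H_s(x)-H_s(y)|$ pointwise by $|u(x)-u(y)|+|v(x)-v(y)|$ times the correct power. Since $v+sw=(1-s)v+su$, we have $|(v+sw)(x)-(v+sw)(y)|\le |u(x)-u(y)|+|v(x)-v(y)|$. I expect this to be the main obstacle, and the argument splits according to the size of $\alpha$.

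In the case $\alpha\ge1$, the derivative $\partial G$ is locally Lipschitz with $|\nabla\partial G(z)|\lesssim|z|^{\alpha-1}$. This gives $|H_s(x)-H_s(y)|\lesssim(\|u\|_{L^\infty}^{\alpha-1}+\|v\|_{L^\infty}^{\alpha-1})(|u(x)-u(y)|+|v(x)-v(y)|)$. Integrating against $|x-y|^{-N-2\theta}$ then produces $F_\theta(x)\|u-v\|_{L^\infty}$ with the stated prefactor.

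In the case $0<\alpha<1$, $\partial G$ is only $\alpha$-Hölder, so I would not go through $H_s$ at all. Instead I would return to $G(u)-G(v)$ directly and use two elementary bounds:
\begin{equation*}
|G(z_1)-G(z_2)-G(z_3)+G(z_4)|\lesssim \min\Big\{\big(|z_1|^{\alpha}+\dots+|z_4|^{\alpha}\big)\big(|z_1-z_3|+|z_2-z_4|\big),\ |z_1-z_2|^{\alpha}\big(|z_1-z_3|+|z_2-z_4|\big)+\dots\Big\},
\end{equation*}
applied with $z_1=u(x)$, $z_2=v(x)$, $z_3=u(y)$, $z_4=v(y)$. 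Concretely, the argument then splits into two regions. On the region where $|w(x)|,|w(y)|$ are small compared to $|u(x)-u(y)|+|v(x)-v(y)|$, I would use the Hölder continuity of $\partial G$, which gives the bound $|w|\,|u-v|^{\alpha-1}\cdots$. In the complementary region, I would bound the four-point difference by $|u(x)-u(y)|+|v(x)-v(y)|$ times $\|w\|_{L^\infty}^{\alpha}$ using the concavity of $t\mapsto t^{\alpha}$. In both regions the integrand is controlled by $\|u-v\|_{L^\infty}^{\alpha}(|u(x)-u(y)|+|v(x)-v(y)|)$, possibly plus the already-handled term $(\|u\|_{L^\infty}^{\alpha}+\|v\|_{L^\infty}^{\alpha})|w(x)-w(y)|$. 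Taking the $L^2(dy,|x-y|^{-N-2\theta})$ norm and applying the triangle inequality then gives $F_\theta(x)\|u-v\|_{L^\infty}^{\alpha}$, which completes \eqref{fracderivalphaD}.
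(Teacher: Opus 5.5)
Your proof of \eqref{fracderivalpha}, and of \eqref{fracderivalphaD} for $\alpha\ge 1$, is correct. In the $\alpha\ge1$ case you interpolate between $v(x)$ and $u(x)$, whereas the paper interpolates between $u(y)$ and $u(x)$ (and between $v(y)$ and $v(x)$). For $\alpha\ge1$ either route works, because $\partial G$ is Lipschitz with constant $\lesssim \|u\|_{L^\infty}^{\alpha-1}+\|v\|_{L^\infty}^{\alpha-1}$ on the relevant ball. The genuine gap is the case $0<\alpha<1$ of \eqref{fracderivalphaD}.

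Set $w=u-v$ and $\delta=|u(x)-u(y)|+|v(x)-v(y)|$. Your first region is fine: if $|w(y)|\le\delta$, then $|w(y)|\,|H_s(x)-H_s(y)|\lesssim |w(y)|\,\delta^{\alpha}\le |w(y)|^{\alpha}\delta$. Where $|w(y)|>\delta$, however, this bound is strictly worse than the target $\|w\|_{L^\infty}^{\alpha}\delta$. The only other bound you have there, $|G(u(x))-G(u(y))|+|G(v(x))-G(v(y))|\lesssim(\|u\|_{L^\infty}^{\alpha}+\|v\|_{L^\infty}^{\alpha})\delta$, has the wrong prefactor. To reach $\|w\|_{L^\infty}^{\alpha}\delta$ you must exhibit a cancellation between $G(u(x))-G(u(y))$ and $G(v(x))-G(v(y))$. ``Concavity of $t\mapsto t^{\alpha}$'' does not by itself supply one. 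Moreover, the second entry of your displayed $\min$ (with its unspecified ``$+\dots$'') is exactly the pointwise inequality to be proved; you state it rather than derive it.

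The missing idea is the paper's decomposition. With $u_t=u(y)+t(u(x)-u(y))$ and $v_t=v(y)+t(v(x)-v(y))$, write $G(u(x))-G(u(y))-G(v(x))+G(v(y))=\int_0^1 G'(u_t)\cdot\big(w(x)-w(y),\overline{w(x)-w(y)}\big)\,dt+\int_0^1\big(G'(u_t)-G'(v_t)\big)\cdot\big(v(x)-v(y),\overline{v(x)-v(y)}\big)\,dt$. The first term is $\lesssim\|u\|_{L^\infty}^{\alpha}|w(x)-w(y)|$. In the second term, $|u_t-v_t|=|(1-t)w(y)+tw(x)|\le\|w\|_{L^\infty}$. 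Since $G'$ is homogeneous of degree $\alpha$, it is globally $\alpha$-H\"older; this is where subadditivity of $t^{\alpha}$ is used. Hence the second term is $\lesssim\|w\|_{L^\infty}^{\alpha}|v(x)-v(y)|$ for every $x,y$, with no region splitting.

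Your own interpolation can also be rescued, but only by a finer argument. One uses $|\partial G(p)-\partial G(q)|\lesssim\max\{|p|,|q|\}^{\alpha-1}|p-q|$ and integrates the singularity in $s$: $\int_0^1|v(y)+sw(y)|^{\alpha-1}ds\lesssim|w(y)|^{\alpha-1}$, which yields $|w(y)|^{\alpha}\delta$. Nothing of this kind appears in your proposal.
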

Before deducing Lemma \ref{lemmafracderNON}, we establish an auxiliary result that will be useful for estimating fractional derivatives of $\partial^{\beta}(|u|^{\alpha}u)$ for $|\beta|\geq 1$. But first, we introduce some notation. Let $\nu>0$ and $p\in \mathbb{Z}^{+}\cup\{0\}$. Noting that $|z|^{\nu} = |z|^{\nu-p}|z|^p$, we denote a generalization of this by
\begin{equation*}
G_{\nu,p}(z,\overline{z}) :=(z\overline{z})^{\frac{\nu}{2}-\frac{p}{2}}\prod_{j=1}^p w_j,    
\end{equation*}
where each $w_j$ is a factor equal to (the first power of ) either $z$ or $\overline{z}$, and we assume that the empty product is $1$, i.e., $\prod_{j=1}^{0}(\cdots)=1$. We also denote by $G_{\nu,p}'(z,\overline{z})=(\partial_z G_{\nu,p}(z,\overline{z}),\partial_{\overline{z}} G_{\nu,p}(z,\overline{z}))$. To simplify notation, we denote by $G_{\nu,p}(z)=G_{\nu,p}(z,\overline{z})$ and $G_{\nu,p}'(z)=G_{\nu,p}'(z,\overline{z})$. Note that in particular, if $u$ is a complex-valued function, then $|G_{\nu,p}(u)|=|u|^{\nu}$.
\begin{lemma}\label{GlemmafracderNON}
Let $\nu>0$ and $0<\theta<1$. It follows that
 \begin{equation}\label{fracderiv}
  \mathcal{D}^{\theta}\big(G_{\nu,p}(u)\big)(x)\lesssim \left\{ \begin{aligned}
 &\|u\|_{L^{\infty}}^{\nu-1}\mathcal{D}^{\theta}(u)(x), \hspace{1.04in} \text{for }\nu\geq 1, \\
 &\left(\int_{\mathbb{R}^N}\frac{|u(x)-u(y)|^{2\nu}}{|x-y|^{2+2\theta}}\, dy\right)^{\frac{1}{2}}, \hspace{0.3in} \text{for } 0<\nu<1.
 \end{aligned}\right.
 \end{equation}   
Moreover, if $\nu\geq 1$,
\begin{equation}\label{fracderivdiff}
\begin{aligned}
\mathcal{D}^{\theta}\Big(G_{\nu,p}(u)-G_{\nu,p}(v)\Big)(x)\lesssim &(\|u\|_{L^{\infty}}^{\nu-1}+\|v\|_{L^{\infty}}^{\nu-1}\big)\mathcal{D}^{\theta}(u-v)(x)\\
&+\left\{ \begin{aligned}
&\big( F_{\theta}(x)\|u-v\|_{L^{\infty}}^{\nu-1},  \hspace{1.3in} \text{if $1\leq \nu <2$}, \\
&\big(\|u\|_{L^{\infty}}^{\nu-2}+\|v\|_{L^{\infty}}^{\nu-2}\big) F_{\theta}(x)\|u-v\|_{L^{\infty}}, \hspace{0.25in} \text{if } \nu \geq 2,   
\end{aligned}\right.    
\end{aligned}
\end{equation}
where
\begin{equation*}
  F_{\theta}(x)=\mathcal{D}^{\theta}(u)(x)+\mathcal{D}^{\theta}(v)(x).
\end{equation*}
\end{lemma}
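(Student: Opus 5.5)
The plan is to reduce everything to pointwise bounds on the difference quotient $|G_{\nu,p}(u(x))-G_{\nu,p}(u(y))|$ and then integrate against $|x-y|^{-N-2\theta}$. (The exponent $2+2\theta$ in the statement for $0<\nu<1$ is read as $N+2\theta$, consistent with the definition of $\mathcal{D}^\theta$.) The basic tool is an elementary inequality for the function $G_{\nu,p}(z)=(z\bar z)^{\frac{\nu}{2}-\frac p2}\prod_{j=1}^p w_j$ on $\mathbb{C}$. This function is continuous and positively homogeneous of degree $\nu$, i.e. $G_{\nu,p}(\lambda z)=\lambda^{\nu}G_{\nu,p}(z)$ for $\lambda>0$. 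It is $C^1$ away from the origin, with $|G'_{\nu,p}(z)|\lesssim |z|^{\nu-1}$ (constants depending on $\nu,p$).

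\textbf{Case $\nu\ge 1$.} Integrating the derivative along the segment from $z_2$ to $z_1$ gives
\begin{equation*}
|G_{\nu,p}(z_1)-G_{\nu,p}(z_2)|\lesssim \max(|z_1|,|z_2|)^{\nu-1}|z_1-z_2|.
\end{equation*}
This is valid even if the segment passes through $0$, since $|z|^{\nu-1}$ is bounded there when $\nu\ge1$. Plugging $z_1=u(x)$ and $z_2=u(y)$ bounds the integrand by $\|u\|_{L^\infty}^{2(\nu-1)}|u(x)-u(y)|^2|x-y|^{-N-2\theta}$, which yields the first line of \eqref{fracderiv}.

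\textbf{Case $0<\nu<1$.} Here I would prove the Hölder-type bound $|G_{\nu,p}(z_1)-G_{\nu,p}(z_2)|\lesssim |z_1-z_2|^{\nu}$.
\begin{itemize}
\item If $|z_1-z_2|\ge \frac12\max(|z_1|,|z_2|)$, the bound is trivial, since each term is at most $\max(|z_1|,|z_2|)^{\nu}$.
\item Otherwise the segment between $z_1$ and $z_2$ stays at distance $\gtrsim \max(|z_1|,|z_2|)$ from the origin. The mean value bound then gives $\lesssim \max^{\nu-1}|z_1-z_2|\le |z_1-z_2|^{\nu}$, using $|z_1-z_2|\lesssim \max$ and $\nu-1<0$.
\end{itemize}
Inserting this into $\mathcal{D}^\theta$ gives the second line of \eqref{fracderiv} directly.

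\textbf{Difference estimate \eqref{fracderivdiff}, $\nu\ge1$.} Set $H(x)=G_{\nu,p}(u(x))-G_{\nu,p}(v(x))$ and write
\begin{equation*}
H(x)=\int_0^1 G'_{\nu,p}\big(v+s(u-v)\big)(x)\,ds\cdot (u-v)(x).
\end{equation*}
The difference $H(x)-H(y)$ then splits into two pieces.
\begin{itemize}
\item The first piece, $\int_0^1 G'_{\nu,p}(\dots)(x)\,ds\,[(u-v)(x)-(u-v)(y)]$, is bounded by $(\|u\|_\infty^{\nu-1}+\|v\|_\infty^{\nu-1})|(u-v)(x)-(u-v)(y)|$. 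This produces the term with $\mathcal{D}^\theta(u-v)$.
\item The second piece is $(u-v)(y)\int_0^1[G'_{\nu,p}(w_s(x))-G'_{\nu,p}(w_s(y))]\,ds$, where $w_s=v+s(u-v)$. Now $G'_{\nu,p}$ is a finite combination of functions of type $G_{\nu-1,p'}$ (homogeneous of degree $\nu-1$), so the pointwise bounds from the first part apply with $\nu$ replaced by $\nu-1$.
\end{itemize}
For the second piece I distinguish two ranges.
\begin{itemize}
\item If $\nu\ge2$, the Lipschitz-type bound gives $\lesssim(\|u\|_\infty^{\nu-2}+\|v\|_\infty^{\nu-2})\,|w_s(x)-w_s(y)|$. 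Since $|w_s(x)-w_s(y)|\le |u(x)-u(y)|+|v(x)-v(y)|$, integrating gives $F_\theta(x)\|u-v\|_\infty$ times the stated factor.
\item If $1\le\nu<2$, the Hölder bound gives $|w_s(x)-w_s(y)|^{\nu-1}$. Because $|w_s(x)-w_s(y)|^{\nu-1}\le 2\max(\|u\|_\infty,\|v\|_\infty)^{\nu-1}$, this does not directly give $F_\theta$ to the first power.
\end{itemize}

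\textbf{Main obstacle.} The main obstacle is this case $1\le\nu<2$ in \eqref{fracderivdiff}, where the exponents must be matched to the stated form $F_\theta(x)\|u-v\|_\infty^{\nu-1}$. I would first interpolate differently: bound the whole second piece by
\begin{equation*}
\min\Big\{\,|u-v|(y)\,|w_s(x)-w_s(y)|^{\nu-1},\ \big|G_{\nu,p}(u)-G_{\nu,p}(v)\big|(x)+\big|G_{\nu,p}(u)-G_{\nu,p}(v)\big|(y)\Big\}.
\end{equation*}
Then I would use the elementary inequality $|G_{\nu,p}(a)-G_{\nu,p}(b)-G_{\nu,p}(c)+G_{\nu,p}(d)|\lesssim |a-b-c+d|\max^{\nu-1}+|a-c|\,|a-b|^{\nu-1}+\dots$, which is standard for $C^{1,\nu-1}$ functions. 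The aim is to distribute the powers so that exactly one factor $|u(x)-u(y)|$ or $|v(x)-v(y)|$ remains, which produces $F_\theta$, and $\|u-v\|_\infty^{\nu-1}$ absorbs the rest. If that fails, I would accept an additional harmless factor of the $L^\infty$ norms, which suffices for the later applications.
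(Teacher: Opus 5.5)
\textbf{There is a genuine gap: the case $1\le\nu<2$ of \eqref{fracderivdiff} is not proved.} Your proofs of \eqref{fracderiv} for both ranges of $\nu$, and of \eqref{fracderivdiff} for $\nu\ge 2$, are correct. For \eqref{fracderiv}, your homogeneity/segment argument is somewhat cleaner than the paper's split into the regions $|u(y)|\le|u(x)|$ and $|u(x)|\le|u(y)|$.

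The problem is the direction of integration. You integrate in the $v\to u$ direction, so your second piece is $\lesssim \|u-v\|_{L^\infty}\,|w_s(x)-w_s(y)|^{\nu-1}$. The exponent $\nu-1<1$ thus falls on the $x$--$y$ oscillation. After dividing by $|x-y|^{N+2\theta}$ and integrating in $y$, this cannot be bounded by $F_\theta(x)$. For smooth $u$ with $\nabla u(x)\neq0$, $\int |u(x)-u(y)|^{2(\nu-1)}|x-y|^{-N-2\theta}\,dy$ diverges near $y=x$ whenever $\nu-1\le\theta$, while the prefactor $|(u-v)(y)|$ does not vanish.

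Neither of your repairs fixes this. Extra $L^\infty$ factors do not change the power of the oscillation. The minimum with $|H(x)|+|H(y)|$ has no decay as $y\to x$. The inequality you quote, with the term $|a-c|\,|a-b|^{\nu-1}$ (where $a=u(x)$, $b=v(x)$, $c=u(y)$, $d=v(y)$), is exactly what is needed. However, you assert it as ``standard'' without proof, and it does not follow from the decomposition you actually carried out.

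\textbf{The missing idea, which is the paper's argument, is to integrate in the $x\to y$ direction instead.} Write $G_{\nu,p}(u(x))-G_{\nu,p}(u(y))$ and $G_{\nu,p}(v(x))-G_{\nu,p}(v(y))$ by the fundamental theorem of calculus along the segments from $u(y)$ to $u(x)$ and from $v(y)$ to $v(x)$, and subtract.

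The first resulting term is $\int_0^1 G'_{\nu,p}\big(u(y)+\tau(u(x)-u(y))\big)\,[(u-v)(x)-(u-v)(y)]\,d\tau$. It gives the $\mathcal{D}^\theta(u-v)$ contribution. The second term is
\[
\int_0^1\Big(G'_{\nu,p}\big(u(y)+\tau(u(x)-u(y))\big)-G'_{\nu,p}\big(v(y)+\tau(v(x)-v(y))\big)\Big)\big(v(x)-v(y)\big)\,d\tau .
\]
Here the two arguments of $G'_{\nu,p}$ differ by $(1-\tau)(u-v)(y)+\tau(u-v)(x)$, which has size at most $\|u-v\|_{L^\infty}$. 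By the $(\nu-1)$-H\"older continuity of $G'_{\nu,p}$, this term is bounded by $\|u-v\|_{L^\infty}^{\nu-1}|v(x)-v(y)|$. That H\"older continuity follows from your own argument applied to the pieces $G_{\nu-1,p'}$; for $\nu=1$ it is just boundedness.

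Now the first power sits on the $x$--$y$ oscillation. Integrating gives $\|u-v\|_{L^\infty}^{\nu-1}\mathcal{D}^\theta(v)(x)\le \|u-v\|_{L^\infty}^{\nu-1}F_\theta(x)$. For $\nu\ge2$ either direction works, because $G'_{\nu,p}$ is then Lipschitz on the relevant ball with constant $\lesssim \|u\|_{L^\infty}^{\nu-2}+\|v\|_{L^\infty}^{\nu-2}$. That is why your argument succeeds there and fails only for $1\le\nu<2$.
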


Given that the proofs of Lemmas \ref{lemmafracderNON} and \ref{GlemmafracderNON} proceed along similar arguments, we provide a more detailed explanation of the second one.

\begin{proof}[Proof of Lemma \ref{GlemmafracderNON}]
We first deduce \eqref{fracderiv}. Taking square power, we divide our estimates as follows
\begin{equation*}
\begin{aligned}
 \Big( \mathcal{D}^{\theta}\big(G_{\nu,p}(u)\big)(x)\Big)^2=&\int_{\mathbb{R}^N}\frac{|G_{\nu,p}(u(x))-G_{\nu,p}(u(y))|^2}{|x-y|^{N+2\theta}}\, dy\\
  =&\int_{|u(y)|\leq |u(x)|}(\dots)\, dy+\int_{|u(x)|\leq |u(y)|}(\dots)\, dy\\
  =:&\mathcal{I}_1+\mathcal{I}_2.
\end{aligned}
\end{equation*}    
We first deal with $\mathcal{I}_1$, i.e., the region $|u(y)|\leq |u(x)|$. We may further assume that $u(y)\neq 0$. Otherwise, 
\begin{equation*}
|G_{\nu,p}(u(x))-G_{\nu,p}(u(y))|=|G_{\nu,p}(u(x))|=|u(x)|^{\nu}=|u(x)-u(y)|^{\nu},    
\end{equation*}
which yields the desired estimates. Thus, assuming that $0<|u(y)|\leq |u(x)|$, we write
\begin{equation*}
\begin{aligned}
G_{\nu,p}(u(x))-G_{\nu,p}(u(y))=&\big(|u(x)|^{\nu-p}-|u(y)|^{\nu-p}\big)\prod_{j=1}^{p}v_j(y)+|u(x)|^{\nu-p}\Big(\prod_{j=1}^{p}v_j(x)-\prod_{j=1}^{p}v_j(y)\Big) \\
 =&\big(|u(x)|^{\nu-p}-|u(y)|^{\nu-p}\big)\prod_{j=1}^{p}v_j(y)\\
 &+|u(x)|^{\nu-p}\sum_{l=1}^p\Big(v_1(y)\dots v_{l-1}(y)(v_l(x)-v_l(y))v_{l+1}(x)\dots v_{p}(x)\Big)\\
 =:&\mathcal{I}_{1,1}+\mathcal{I}_{1,2},
\end{aligned}    
\end{equation*}
where when $l=1$, we understand that the first term of the sum above is $ (v_1(x)-v_1(y))v_2(x)\dots\dots v_{p}(x)$ and when $l=p$, this factor is $v_1(y)\dots v_{p-1}(y)(v_{p}(x)-v_{p}(y))$. To estimate $\mathcal{I}_{1,1}$ and $\mathcal{I}_{1,1}$, we consider two cases: $\nu \geq 1$ and $0<\nu<1$.

\underline{\bf Case $\nu\geq 1$}. Since $|u(y)|\leq |u(x)|$, and $|v_j(y)|=|u(y)|$, the mean value inequality yields
\begin{equation*}
\begin{aligned}
  |\mathcal{I}_{1,1}|\lesssim 
\left\{\begin{aligned}
&|u(x)|^{\nu-1}|u(x)-u(y)|, \quad \text{ when }\quad \nu-p-1\geq 0,\\
&|u(y)|^{\nu-1}|u(x)-u(y)|, \quad \text{ when }\quad \nu-p-1< 0.
\end{aligned}  \right.
\end{aligned}    
\end{equation*}
It follows
\begin{equation*}
 |\mathcal{I}_{1,1}|\lesssim \|u\|_{L^{\infty}}^{\nu-1}|u(x)-u(y)|.   
\end{equation*}
The same inequality above holds for $\mathcal{I}_{1,2}$, given that $|u(y)|^{l-1}\lesssim |u(x)|^{l-1}$ for all $l=1,\dots,p$ implies 
\begin{equation*}
 \begin{aligned}
 |\mathcal{I}_{1,1}|\lesssim & \sum_{l=1}^p|u(x)|^{\nu-p}|u(y)|^{l-1}|u(x)|^{p-l}|u(x)-u(y)|  \\
 \lesssim & |u(x)|^{\nu-1}|u(x)-u(y)|\\
 \lesssim &\|u\|_{L^{\infty}}^{\nu-1}|u(x)-u(y)|.
 \end{aligned}   
\end{equation*}
\underline{\bf Case $0<\nu<1$}. We write
\begin{equation*}
\begin{aligned}
\mathcal{I}_{1,1}=&\big(|u(x)|^{\nu}-|u(y)|^{\nu}\big)|u(x)|^{-p}\prod_{j=1}^{p}v_j(y)+|u(y)|^{\nu}\big(|u(x)|^{-p}-|u(y)|^{-p}\big)\prod_{j=1}^{p}v_j(y)\\
 =&\big(|u(x)|^{\nu}-|u(y)|^{\nu}\big)|u(x)|^{-p}\prod_{j=1}^{p}v_j(y)+|u(y)|^{\nu}\Big(\frac{|u(y)|^p-|u(x)|^p}{|u(x)|^{p}|u(y)|^p}\Big)\prod_{j=1}^{p}v_j(y).  
\end{aligned}
 \end{equation*}
Using the mean value inequality and that $|u(x)-u(y)|\lesssim |u(x)|^{1-\nu}|u(x)-u(y)|^{\nu}$ (provided that $|u(y)|\leq |u(x)|$), we get
\begin{equation*}
    \begin{aligned}
     \Big|\frac{|u(y)|^p-|u(x)|^p}{|u(x)|^{p}|u(y)|^p}\Big| \lesssim & |u(x)|^{-1}|u(y)|^{p}|u(x)-u(y)|\\
     \lesssim &|u(x)|^{-\nu}|u(y)|^{-p}|u(x)-u(y)|^{\nu}.
    \end{aligned}
\end{equation*}
We conclude from the above estimate and H\"older continuity of the function $|x|^{\nu}$, $0<\nu<1$, that
\begin{equation*}
    |\mathcal{I}_{1,1}|\lesssim |u(x)-u(y)|^{\nu}.
\end{equation*}
Once again using that in the present case $|u(x)-u(y)|\lesssim |u(x)|^{1-\nu}|u(x)-u(y)|^{\nu}$, we deduce
\begin{equation*}
    |\mathcal{I}_{1,2}|\lesssim |u(x)-u(y)|^{\nu}.
\end{equation*}
This concludes the case $0<\nu<1$.

Back to the integral $\mathcal{I}_1$, the previous estimates give us the following. 
As we have estimated the difference $|G_{\nu,p}(u(x))-G_{\nu,p}(u(y))|$, we divide it by $|x-y|^{N+2\theta}$ and integrate over the support of $\mathcal{I}_1$. For example, when $\nu\geq 1$, we get 
\begin{equation*}
\begin{aligned}
\mathcal{I}_1=\int_{|u(y)|\leq |u(x)|}\frac{|G_{\nu,p}(u(x))-G_{\nu,p}(u(y))|^2}{|x-y|^{N+2\theta}}\, dy\lesssim & \|u\|_{L^{\infty}}^{2\nu-2} \int_{|u(y)|\leq |u(x)|}\frac{|u(x)-u(y)|^2}{|x-y|^{N+2\theta}}\, dy \\
\lesssim &   \|u\|_{L^{\infty}}^{2\nu-2}\big(\mathcal{D}^{\theta}(u)(x)\big)^2 .    
\end{aligned}
\end{equation*}
Thus, in both cases, we have
\begin{equation*}
 \mathcal{I}_1\lesssim 
\left\{ \begin{aligned}
 &\|u\|_{L^{\infty}}^{2\nu-2}\big(\mathcal{D}^{\theta}(u)(x)\big)^2, \hspace{0.5in} \text{for }\nu\geq 1, \\
 &\int_{\mathbb{R}^N}\frac{|u(x)-u(y)|^{2\nu}}{|x-y|^{2+2\theta}}\, dy, \hspace{0.33in} \text{for } 0<\nu<1.
 \end{aligned}\right.
\end{equation*}
By symmetry, one can change the order of the variables $x$ and $y$ in the previous argument to show that $\mathcal{I}_2$ also satisfies the last inequality above. The proof of \eqref{fracderiv} is now complete.

We now  establish \eqref{fracderivdiff}. We write
\begin{equation*}
\begin{aligned}
G_{\nu,p}(u(x))&-G_{\nu,p}(v(x))-\Big(G_{\nu,p}(u(y))-G_{\nu,p}(v(y))\Big)\\
=&\int_{0}^1 G_{\nu,p}'\big(u(y)+\theta(u(x)-u(y))\big)\bigg(\begin{array}{cc}
     u(x)-u(y)  \\
     \overline{u}(x)-\overline{u}(y)
\end{array}\bigg)\, d\theta\\
&-\int_{0}^1 G_{\nu,p}'\big(v(y)+\theta(v(x)-v(y))\big)\bigg(\begin{array}{cc}
     v(x)-v(y)  \\
     \overline{v}(x)-\overline{v}(y)
\end{array}\bigg)\, d\theta\\
=&\int_{0}^1 G_{\nu,p}'\big(u(y)+\theta(u(x)-u(y))\big)\bigg(\begin{array}{cc}
     u(x)-u(y)-(v(x)-v(y))  \\
     \overline{u(x)}-\overline{u(y)}-(\overline{v}(x)-\overline{v}(y))
\end{array}\bigg)\, d\theta\\
&+\int_0^1 \Big(G_{\nu,p}'\big(u(y)+\theta(u(x)-u(y))\big)-G_{\nu,p}'\big(v(y)+\theta(v(x)-v(y))\big)\Big)\bigg(\begin{array}{cc}
     v(x)-v(y)  \\
     \overline{v}(x)-\overline{v}(y)
\end{array}\bigg)\, d\theta\\
=:&\mathcal{G}_1+\mathcal{G}_2.
\end{aligned}    
\end{equation*}
Since $|G_{\nu,p}'(z)|\lesssim |z|^{\nu-1}$, we deduce
\begin{equation*}
\begin{aligned}
 |\mathcal{G}_1|\lesssim & \big(|u(x)|^{\nu-1}+|u(y)|^{\nu-1}\big)|u(x)-v(x)-\big(u(y)-v(y)\big)|\\
 \lesssim& \big(\|u\|_{L^{\infty}}^{\nu-1}+\|v\|^{\nu-1}_{L^{\infty}}\big)|u(x)-v(x)-\big(u(y)-v(y)\big)|.
\end{aligned}    
\end{equation*}
Now, arguing as in the  estimate for $\mathcal{I}_{1,1}$ and $\mathcal{I}_{1,2}$ above, it follows
\begin{equation*}
|\mathcal{G}_2|\lesssim \left\{\begin{aligned}
&\big(|u(y)-v(y)|^{\nu-1}+|u(x)-v(x)-(u(y)-v(y))|^{\nu-1}\big)|v(x)-v(y)|, \hspace{0.2in} \text{if $0\leq \nu-1<1$}, \\ &  \big(|u(y)|^{\nu -2}+|v(y)|^{\nu-2}+|u(x)-u(y)|^{\nu -2}+|v(x)-v(y)|^{\nu-2} \big) \\
& \hspace{0.2in}\times \Big(|u(y)-v(y)|+|u(x)-v(x)-(u(y)-v(y))|\Big)|v(x)-v(y)|, \hspace{0.23in} \text{if $\nu-1\geq 1$}.
\end{aligned}\right.   
\end{equation*}
Taking $L^{\infty}$-norm, we use the previous inequality to get
\begin{equation*}
|\mathcal{G}_2|\lesssim \left\{\begin{aligned}
&\|u-v\|_{L^{\infty}}^{\nu-1}|v(x)-v(y)|, \hspace{1.36in} \text{if $0\leq \nu-1<1$}, \\ &  \big(\|u\|_{L^{\infty}}^{\nu -2}+\|v\|_{L^{\infty}}^{\nu-2} \big) \|u-v\|_{L^{\infty}}|v(x)-v(y)|, \hspace{0.23in} \text{if $\nu-1\geq 1$}.
\end{aligned}\right.   
\end{equation*}
Similar to the above (an estimate of $\mathcal I_1$), we deduce
\begin{equation*}
\begin{aligned}
\mathcal{D}^{\theta}\Big(G_{\nu,p}(u)-G_{\nu,p}(v)\Big)(x)\lesssim & (\|u\|_{L^{\infty}}^{\nu-1}+\|v\|_{L^{\infty}}^{\nu-1}\big)\mathcal{D}^{\theta}(u-v)(x)\\
&+\left\{ \begin{aligned}
&\big(\|u-v\|_{L^{\infty}}^{\nu-1} \mathcal{D}^{\theta}(v)(x),  \hspace{1.3in} \text{if $0\leq \nu-1<1$}, \\
&\big(\|u\|_{L^{\infty}}^{\nu-2}+\|v\|_{L^{\infty}}^{\nu-2}\big)\|u-v\|_{L^{\infty}} \mathcal{D}^{\theta}(v)(x), \hspace{0.25in} \text{if } \nu-1\geq 1,   
\end{aligned}\right. 
\end{aligned} 
\end{equation*}
which finished the proof. 
\end{proof}

\begin{proof}[Proof of Lemma \ref{fracderivalpha}]
The inequality \eqref{fracderivalpha} follows the same steps as in the deduction of \eqref{fracderiv} when $\nu\geq 1$. For the difference estimate \eqref{fracderivalphaD}, setting $G_{\alpha}(z,\overline{z})=(z\overline{z})^{\frac{\alpha}{2}}z$, the proof is similar to the deduction of \eqref{fracderivdiff}.    
\end{proof}

\subsection{Proof of Theorem \ref{LWPresults}}

We present the key nonlinear estimates for our local well-posedness result in $H^s$-spaces.
\begin{lemma}\label{nonlinearHS} Let $s>\frac{N}{2}$ and $\alpha\geq \lceil s\rceil$. Assume that $u, v\in H^s(\mathbb{R}^N)$. Then
\begin{equation*}
  \||u|^{\alpha} u\|_{H^s}\lesssim \|u\|_{H^s}^{\alpha+1}  
\end{equation*}
and
\begin{equation*}
\begin{aligned}
 \||u|^{\alpha} u-|v|^{\alpha}v\|_{H^s}\lesssim \big(1+\|u\|_{H^s}+\|v\|_{H^s}\big)^{\alpha}\|u-v\|_{H^s}   
\end{aligned}    
\end{equation*}
\end{lemma}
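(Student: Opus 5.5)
Write $s=k+\theta$ with $k=\lfloor s\rfloor$ and $\theta\in[0,1)$. Since $\|f\|_{H^s}\sim \|f\|_{L^2}+\sum_{|\beta|=k}\|J^{\theta}\partial^\beta f\|_{L^2}$, Theorem \ref{stein} (with $p=2$) reduces the bound to
\[
\|f\|_{L^2},\qquad \|\partial^\beta f\|_{L^2},\qquad \|\mathcal{D}^{\theta}\partial^\beta f\|_{L^2}\ \ (\text{the last only if }\theta>0),
\]
for $f=|u|^\alpha u$ and $|\beta|\le k$. The $L^2$ term is immediate: $\||u|^\alpha u\|_{L^2}\le \|u\|_{L^\infty}^\alpha\|u\|_{L^2}$, and $H^s\hookrightarrow L^\infty$ because $s>N/2$.

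For the derivatives, expand $\partial^\beta(|u|^\alpha u)$ as in \eqref{deriv}--\eqref{B}. Each term is a sum of products
\[
G_{\alpha+1-m,\,p}(u)\,\partial^{\beta_1}u_{\pm}\cdots\partial^{\beta_m}u_{\pm},\qquad |\beta_1|+\dots+|\beta_m|=|\beta|,\ m\le|\beta|\le k,
\]
where $u_\pm$ denotes $u$ or $\bar u$. The key point is that $\alpha\ge\lceil s\rceil\ge k+1$ when $\theta>0$, and $\alpha\ge k$ when $\theta=0$. Hence the exponent satisfies $\nu=\alpha+1-m\ge 1$, so no negative powers of $|u|$ appear and no nonvanishing condition is needed. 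This is exactly the role of the hypothesis on $\alpha$.

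For integer-order terms, bound $|G_{\nu,p}(u)|\le\|u\|_{L^\infty}^{\nu}$. The product of derivatives is then estimated in $L^2$ by H\"older with exponents $p_j=2|\beta|/|\beta_j|$ and the Gagliardo--Nirenberg inequality
\[
\|\partial^{\beta_j}u\|_{L^{p_j}}\lesssim\|u\|_{L^\infty}^{1-|\beta_j|/|\beta|}\|D^{|\beta|}u\|_{L^2}^{|\beta_j|/|\beta|}.
\]
This gives $\lesssim\|u\|_{L^\infty}^{\alpha}\|u\|_{H^s}\lesssim \|u\|_{H^s}^{\alpha+1}$.

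For the fractional part, apply \eqref{Leib} (or Lemma \ref{Grafakos-Oh}) to distribute $\mathcal{D}^\theta$ over the product. When $\mathcal{D}^\theta$ falls on $G_{\nu,p}(u)$, use \eqref{fracderiv} with $\nu\ge1$ to get $\|u\|_{L^\infty}^{\nu-1}\mathcal{D}^\theta u$. When it falls on a factor $\partial^{\beta_j}u$, use the fractional Leibniz rule of Lemma \ref{Grafakos-Oh}. Every resulting product is then placed in $L^2$ via H\"older and fractional Gagliardo--Nirenberg interpolation between $L^\infty$ and $\dot H^{s}$: a factor of total order $\sigma_j$ goes in $L^{2s/\sigma_j}$, with orders summing to $s$. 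This yields $\|u\|_{L^\infty}^{\alpha}\|u\|_{H^s}$ again.

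\textbf{Main obstacle.} I expect the hardest step to be the mixed term where $\mathcal{D}^\theta$ hits $G_{\nu,p}(u)$ while other derivative factors are present. Here the pointwise bound $\mathcal{D}^\theta u(x)$ must be multiplied by derivative factors and put in $L^2$. I would try to use Theorem \ref{stein} in $L^{q}$ with $q>2$, which is allowed since $q>2N/(N+2\theta)$, to control $\|\mathcal{D}^\theta u\|_{L^{q}}\lesssim\|J^\theta u\|_{L^q}$, and then interpolate.

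\textbf{Difference estimate.} Write the same expansion for $u$ and $v$ and telescope, putting $u-v$ in exactly one factor. The integer-order terms are handled as above, using the mean value bound $|G_{\nu,p}(u)-G_{\nu,p}(v)|\lesssim(|u|^{\nu-1}+|v|^{\nu-1})|u-v|$, valid for $\nu\ge1$. For the fractional part, use \eqref{fracderivdiff}, together with \eqref{fracderivalphaD} for the top term ($m=0$). The cases $1\le\nu<2$ produce $\|u-v\|_{L^\infty}^{\nu-1}$; since $\nu-1<1$ this is only H\"older, so a full factor $\|u-v\|$ would fail. These terms arise only for the largest $m$, and I would avoid them by applying \eqref{fracderivdiff} with $\nu\ge2$ whenever possible. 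Otherwise I would reorganize the telescoping so that the difference falls on a derivative factor instead of on $G_{\nu,p}$. Collecting everything gives the factor $(1+\|u\|_{H^s}+\|v\|_{H^s})^{\alpha}$.
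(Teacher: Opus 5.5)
Your proposal is correct, and it reaches the estimate by a different route from the paper.

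\textbf{The paper's route.} It writes each term of $\partial^\beta(|u|^\alpha u)$ as $|u|^{\alpha-2p}$ times $2p+1$ derivative factors. A counting argument then finds $r$ zero-order factors to absorb, so that \eqref{fracderiv} applies with exponent $\alpha+r-2p\ge 1$. Every product is closed by H\"older and Sobolev/Hardy--Littlewood--Sobolev embeddings $H^{\kappa_l}\hookrightarrow L^{q_l}$. The exponents $\kappa_l$ are tuned through Claim \ref{positvclaim}, and the regime $\tfrac N2\le\theta<1$ (which forces $N=1$, with $\theta=\tfrac12$ treated apart) is handled separately.

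\textbf{Your route.} You track the coefficient as $G_{\alpha+1-m,p}(u)$ from the start, which makes the role of $\alpha\ge\lceil s\rceil$ immediate. You close all products with Gagliardo--Nirenberg interpolation between $L^\infty$ and $\dot H^s$, placing a factor of order $\sigma$ in $L^{2s/\sigma}$. This buys three things:
\begin{itemize}
\item it treats all dimensions and all $\theta\in(0,1)$ uniformly;
\item it avoids the exponent bookkeeping;
\item it yields the stronger tame bound $\||u|^\alpha u\|_{H^s}\lesssim \|u\|_{L^\infty}^{\alpha}\|u\|_{H^s}$.
\end{itemize}
The cost is an external ingredient: the fractional inequality $\|D^\sigma u\|_{L^{2s/\sigma}}\lesssim \|u\|_{L^\infty}^{1-\sigma/s}\|D^s u\|_{L^2}^{\sigma/s}$ for $0<\sigma<s$. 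It is standard (via Littlewood--Paley square functions) but should be cited, whereas the paper uses only embeddings.

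The term you flagged as the main obstacle works as you suggest. With $q=2s/\theta>2$, Theorem \ref{stein} gives $\|\mathcal{D}^\theta u\|_{L^q}\lesssim \|u\|_{L^q}+\|D^\theta u\|_{L^q}$. Both pieces interpolate between $L^\infty$ and $H^s$ with exactly the exponents needed.

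\textbf{One correction in the difference estimate.} Your worry about the H\"older regime $1\le\nu<2$ of \eqref{fracderivdiff} is unfounded.
\begin{itemize}
\item A fractional derivative appears only when $\theta>0$. Then $\alpha\ge\lceil s\rceil=k+1$ and $m\le k$ give $\nu=\alpha+1-m\ge 2$.
\item When $\theta=0$, only the pointwise mean value bound is used, and it is valid for $\nu\ge1$.
\end{itemize}
So \eqref{fracderivdiff} is always applied in its Lipschitz form, and the fallback of reorganizing the telescoping is never needed. With this observation, your difference argument is more explicit than the paper's, which only sketches this step.
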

\begin{proof}
We write $s=s_1+\theta$, where $s_1\in \mathbb{Z}^{+}$ and $\theta\in [0,1)$. We restrict to the case $\theta \in (0,1)$ as the case $\theta=0$ follows by controlling only local derivatives and with simple modifications to our argument below. Thus, our goal is to address the fractional case, which is more involved.  We consider two main cases $\frac12<s<1$ and $s>\max\{1,\frac{N}{2}\}$.

\underline{\bf Regularity $\frac{1}{2}< s <1$}. Since the condition $s>\frac{N}{2}$, it must follow that the dimension $N=1$. Using Theorem \ref{stein}, Lemma \ref{lemmafracderNON}, and Sobolev embedding $H^s(\mathbb{R})\hookrightarrow L^{\infty}(\mathbb{R})$, we get 
\begin{equation}\label{Nonlest1}
\begin{aligned}
 \||u|^{\alpha}u \|_{H^s}\lesssim  &\||u|^{\alpha} u\|_{L^2}+\|\mathcal{D}^s\big(|u|^{\alpha} u\big)\|_{L^2} \\
 \lesssim & \|u\|_{H^s}^{\alpha+1}+\|u\|_{L^{\infty}}\|\mathcal{D}^s(u)\|_{L^2}\\
 \lesssim & \|u\|_{H^s}^{\alpha+1}.
\end{aligned}    
\end{equation}
Using that $\alpha\geq 1$, Theorem \ref{stein}, Lemma \ref{lemmafracderNON} and Sobolev embedding, we get 
\begin{equation}\label{Nonlest2}
\begin{aligned}
 \||u|^{\alpha}u-|v|^{\alpha}v \|_{H^s}\lesssim  &\||u|^{\alpha} u-|v|^{\alpha}v\|_{L^2}+\|\mathcal{D}^s\big(|u|^{\alpha} u-|v|^{\alpha} v\big)\|_{L^2} \\
 \lesssim & \big(\|u\|_{L^{\infty}}^{\alpha}+\|v\|_{L^{\infty}}^{\alpha}\big)\|u-v\|_{L^2}\\
 &+\big(\|u\|_{L^{\infty}}^{\alpha-1}+\|v\|_{L^{\infty}}^{\alpha-1}\big)\big(\|\mathcal{D}^{\theta}(u)\|_{L^2}+\|\mathcal{D}^{\theta}(v)\|_{L^2}\big)\|u-v\|_{L^{\infty}}\\
 \lesssim & \big(\|u\|_{H^s}^{\alpha}+\|v\|_{H^s}^{\alpha}\big)\|u-v\|_{H^s}.
\end{aligned}    
\end{equation}
This completes the estimates in this case.
\smallskip

\underline{\bf Regularity $s>\max\Big\{1,\frac{N}{2}\Big\}$}. Recall that $s=s_1+\theta$, where $s_1\in \mathbb{Z}^{+}$ and $\theta\in (0,1)$. To estimate the $H^s$-norm of $|u|^{\alpha}u$, it suffices to show that $\partial^{\beta}(|u|^{\alpha}u)\in H^{\theta}(\mathbb{R}^N)$ for any $|\beta|=s_1$. Thus, we decompose $\partial^{\beta}(|u|^{\alpha}u)$ into $\mathscrsfs{A}$ and $\mathscrsfs{B}$ as in \eqref{deriv} and split our reasoning accordingly.
\smallskip

\underline{Estimates for $\mathscrsfs{A}$}. After using the equivalence between the Riesz potential $D^{\theta}$ and $\mathcal{D}^{\theta}$, we apply the fractional type Leibniz rule \eqref{Leib} and  \eqref{fracderiv} with $\alpha\geq 1$ to get 
\begin{equation*}
\begin{aligned}
 \||u|^{\alpha}\partial^{\beta} u\|_{H^{\theta}}\lesssim &  \||u|^{\alpha}\partial^{\beta} u\|_{L^2}+ \|\mathcal{D}^{\theta}\big(|u|^{\alpha}\partial^{\beta} u\big)\|_{L^2}\\
 \lesssim &\|u\|_{L^{\infty}}^{\alpha}\|\partial^{\beta} u\|_{L^2}+\||u|^{\alpha} \mathcal{D}^{\theta}\big(\partial^{\beta} u\big)\|_{L^2}+ \|\mathcal{D}^{\theta}\big(|u|^{\alpha}\big)\partial^{\beta} u\|_{L^2}\\
 \lesssim & \|u\|_{L^{\infty}}^{\alpha}\Big(\|\partial^{\beta} u\|_{L^2}+\| \mathcal{D}^{\theta}\big(\partial^{\beta} u\big)\|_{L^2}\Big)+ \|u\|_{L^{\infty}}^{\alpha-1}\|\mathcal{D}^{\theta}(u)\partial^{\beta} u\|_{L^2}.
\end{aligned}    
\end{equation*}
Let us estimate the second term on the right-hand side of the previous inequality. By H\"older's inequality, we write
\begin{equation}\label{Restrict0}
\begin{aligned}
\|\mathcal{D}^{\theta}(u)\partial^{\beta} u\|_{L^2}\lesssim & \|\mathcal{D}^{\theta}(u)\|_{L^{q_1}}\|\partial^{\beta} u\|_{L^{q_2}},
\end{aligned}    
\end{equation}
where
\begin{equation}\label{Restrict1}
\left\{  \begin{aligned}
   &\frac{1}{q_j}=\frac{1}{2}-\frac{\kappa_j}{N}, \quad 0<\kappa_j< \frac{N}{2}, \quad j=1,2, \\
   &\kappa_1+\kappa_2=\frac{N}2{}.
  \end{aligned} \right. 
\end{equation}
Assuming first that $s-|\beta|<\frac{N}{2}$ and that $0<\theta<\min\{1,\frac{N}{2}\}$, we take $\kappa_2=s-|\beta|$, and $\kappa_1=\frac{N}{2}-s+|\beta|$ in \eqref{Restrict1}. We note that $\kappa_1 \leq \frac{N}{2}-\theta$, i.e., $\kappa_1+\theta\leq \frac{N}{2}<s$. Using that $\|\mathcal{D}^{\theta}(u)\|_{L^{q_1}}\lesssim \|J^{\theta}(u)\|_{L^{q_1}}$ (by Theorem \ref{stein}), and Hardy-Littlewood-Sobolev inequality, it follows that
\begin{equation*}
 \begin{aligned}
   \|\mathcal{D}^{\theta}(u)\partial^{\beta} u\|_{L^2}\lesssim & \|\mathcal{D}^{\theta}(u)\|_{L^{q_1}}\|\partial^{\beta} u\|_{L^{q_2}}\lesssim \|J^{\kappa_1+\theta} u\|_{L^2}\|J^s u\|_{L^2}\lesssim \|u\|_{H^s}^2.   
 \end{aligned}   
\end{equation*}

Next, we assume that $s-|\beta|<\frac{N}{2}$ and that $\frac{N}{2}\leq \theta <1$, which forces the dimension $N=1$. If $\theta>\frac{1}{2}$, we use Sobolev embedding $H^{\theta}(\mathbb{R})\hookrightarrow L^{\infty}(\mathbb{R})$ to deduce
\begin{equation*}
\begin{aligned}
 \|\mathcal{D}^{\theta}(u)\partial^{\beta} u\|_{L^2}\lesssim \|\partial^{\beta}u\|_{L^{\infty}}\|\mathcal{D}^{\theta}(u)\|_{L^2}\lesssim \|u\|_{H^s}\|u\|_{H^{\theta}}\lesssim \|u\|_{H^{s}}^2.    
\end{aligned}    
\end{equation*}
If $\theta=\frac{1}{2}$, using that $s>1$ and that $s_1+\frac{1}{4}\leq s$, by Sobolev embedding we get
\begin{equation*}
\begin{aligned}
 \|\mathcal{D}^{\frac{1}{2}}(u)\partial^{\beta} u\|_{L^2}\lesssim \|\partial^{\beta}u\|_{L^{4}}\|\mathcal{D}^{\frac{1}{2}}(u)\|_{L^{4}}\lesssim \|u\|_{H^s}\|u\|_{H^{\frac{3}{2}}}\lesssim \|u\|_{H^{s}}^2.    
\end{aligned}    
\end{equation*}
Next, we assume that $s-|\beta|\geq \frac{N}{2}$ (observe that there is no restriction on the size of $0<\theta<1$). We take any fixed $0<\kappa_1<\min\{s-\theta,\frac{N}{2}\}$, and we set $\kappa_2=\frac{N}{2}-\kappa_1$. Using these restrictions in \eqref{Restrict0} and Hardy-Littlewood-Sobolev inequality
\begin{equation*}
 \begin{aligned}
   \|\mathcal{D}^{\theta}(u)\partial^{\beta} u\|_{L^2}\lesssim \|J^{\kappa_1+\theta} u\|_{L^2}\|J^{|\beta|+\kappa_2} u\|_{L^2}\lesssim \|u\|_{H^s}^2,
 \end{aligned}   
\end{equation*}
where we have used that $\kappa_1+\theta\leq \frac{N}{2}+\theta\leq s-(|\beta|-\theta)<s$, provided that $|\beta|=s_1\geq 1$, and that $|\beta|+\kappa_2\leq |\beta|+\frac{N}{2}\leq s$. 

Summarizing, we obtain
\begin{equation*}
\begin{aligned}
 \||u|^{\alpha}\partial^{\beta} u\|_{H^{\theta}}\lesssim \|u\|^{\alpha+1}_{H^{s}}.   
\end{aligned}    
\end{equation*}

\underline{Estimates for $\mathscrsfs{B}$}. In this case, we need to estimate the fractional derivative of the expressions
\begin{equation*}
\begin{aligned}
\Big\|\mathcal{D}^{\theta}\Big(|u|^{\alpha-2p}\,\,\partial^{\beta-\gamma} u \,\,\partial^{\gamma_{1,1}} u\,\,\partial^{\gamma_{2,1}}\bar{u}\dots\partial^{\gamma_{1,p}} u\,\,\partial^{\gamma_{2,p}} \bar{u}\Big)\Big\|_{L^2},
\end{aligned}    
\end{equation*}
where $1\leq |\gamma|\leq |\beta|$, $1\leq p\leq |\gamma|$, $\gamma_1+\dots+\gamma_p=\gamma$, $|\gamma_j|\geq1$, and $\gamma_{1,j}+\gamma_{2,j}=\gamma_j$, $1\leq j\leq p$.

We first claim that there exists an integer $0 \leq r \leq p$ such that
\begin{equation*}
 \alpha + r - 2p \geq 1,
\end{equation*}
and among the factors
\begin{equation*}
\partial^{\beta-\gamma} u \,\,\partial^{\gamma_{1,1}} u\,\,\partial^{\gamma_{2,1}}\bar{u}\dots\partial^{\gamma_{1,p}} u\,\,\partial^{\gamma_{2,p}} \bar{u}    
\end{equation*}
at least $r$ of them involve derivatives of order zero. If such an $r$ does not exist, then necessarily $\alpha < p+1 \leq s_1+1$, where we recall that $s = s_1 + \theta$ with $s_1 \in \mathbb{Z}^+$. But this contradicts our assumption $\lfloor s \rfloor = s_1+1 \leq \alpha$. Consequently, we write
\begin{equation*}
 |u|^{\alpha-2p}\,\,\partial^{\beta-\gamma} u \,\,\partial^{\gamma_{1,1}} u\,\,\partial^{\gamma_{2,1}}\bar{u}\dots\partial^{\gamma_{1,p}} u\,\,\partial^{\gamma_{2,p}} \bar{u}= |u|^{\alpha-2p}\prod_{j=1}^{r}w_j \bigg(\prod_{l=1}^{2p+1-r} \partial^{\widetilde{\gamma}_l}w_l\bigg),  
\end{equation*}
where $w_j$ and $w_l$ are equal to either $u$ or $\overline{u}$, and $\sum_{l=1}^{2p+1-r} |\widetilde{\gamma}_l|=|\beta|$, and we assume that the empty sum is equal to zero. Applying the Leibniz rule \eqref{Lei}, \eqref{fracderiv} in Lemma \ref{GlemmafracderNON}, we get
\begin{equation*}
\begin{aligned}
\Big\|\mathcal{D}^{\theta} &\Big(|u|^{\alpha-2p}\,\,\partial^{\beta-\gamma} u \,\,\partial^{\gamma_{1,1}} u\,\,\partial^{\gamma_{2,1}}\bar{u}\dots\partial^{\gamma_{1,p}} u\,\,\partial^{\gamma_{2,p}} \bar{u}\Big)\Big\|_{L^2}\\
\lesssim & \bigg\|\mathcal{D}^{\theta}\bigg(|u|^{\alpha-2p}\prod_{j=1}^{r}w_j \bigg)\bigg(\prod_{l=1}^{2p+1-r} \partial^{\widetilde{\gamma}_l}u\bigg)\bigg\|_{L^2}+\bigg\||u|^{\alpha+r-2p}\, \mathcal{D}^{\theta}\bigg(\prod_{l=1}^{2p+1-r} \partial^{\widetilde{\gamma}_l}u\bigg)\bigg\|_{L^2}\\
\lesssim & \|u\|_{L^{\infty}}^{\alpha+r-2p-1}\bigg\|\mathcal{D}^{\theta}(u)\bigg(\prod_{l=1}^{2p+1-r} \partial^{\widetilde{\gamma}_l}u\bigg)\bigg\|_{L^2}+\|u\|_{L^{\infty}}^{\alpha+r-2p}\bigg\|D^{\theta}\bigg(\prod_{l=1}^{2p+1-r} \partial^{\widetilde{\gamma}_l}u\bigg)\bigg\|_{L^2}\\
\lesssim&~\mathcal{I}_{1}+\mathcal{I}_{2}.
\end{aligned}    
\end{equation*}
To estimate $\mathcal{I}_1$, we proceed as in $\mathscrsfs{A}$ above. Let us consider some simple cases. If any of the derivatives $\widetilde{\gamma}_l=\beta$, then $\mathcal{I}_1$ reduces to
\begin{equation}\label{casehihgerorderder}
 \begin{aligned}
\mathcal{I}_1\lesssim \|u\|_{L^{\infty}}^{\alpha+r-2p-1}\bigg\|\mathcal{D}^{\theta}(u)\partial^{\beta}u\bigg(\prod_{l=1}^{2p-r}u\bigg)\bigg\|_{L^2}\lesssim \|u\|_{L^{\infty}}^{\alpha-1}\|\mathcal{D}^{\theta}(u)\, \partial^{\beta}u\|_{L^2},
 \end{aligned}   
\end{equation}
whose estimate is the same as obtained for $\mathscrsfs{A}$ above. Hence, we will assume that $|\widetilde{\gamma}_l|<|\beta|$ for all $l=1,\dots, 2p+1-r$.

Next, we consider the cases $\frac{N}{2}\leq \theta <1$, which forces $N=1$. If $\theta>\frac{1}{2}$ in dimension $N=1$, we use Sobolev embedding $H^{\theta}(\mathbb{R})\hookrightarrow L^{\infty}(\mathbb{R})$ to get
\begin{equation*}
\begin{aligned}
\mathcal{I}_1\lesssim   \|u\|_{L^{\infty}}^{\alpha+r-2p-1}\bigg(\prod_{l=1}^{2p+1-r}\|\partial^{\widetilde{\gamma}_{l}}u\|_{L^{\infty}}\bigg)\|\mathcal{D}^{\theta}(u)\|_{L^2}  \lesssim \|u\|^{\alpha+1}_{H^{s}}.   
\end{aligned}  
\end{equation*}
If $\theta=\frac{1}{2}$ in dimension $N=1$, consider $\frac{1}{q}=\frac{1}{2}-\frac{2p+1-r}{2(2p+2-r)}$, and note that $0<\theta_1:=\frac{2p+1-r}{2(2p+2-r)}<\frac{1}{2}$. Using H\"older's inequality and Sobolev embedding $H^{\theta_1}(\mathbb{R})\hookrightarrow L^q(\mathbb{R})$, we deduce 
\begin{equation*}
\begin{aligned}
\mathcal{I}_1\lesssim &  \|u\|_{L^{\infty}}^{\alpha+r-2p-1}\bigg(\prod_{l=1}^{2p+1-r}\|\partial^{\widetilde{\gamma}_{l}}u\|_{L^{q}}\bigg)\|\mathcal{D}^{\frac{1}{2}}(u)\|_{L^q}\\
\lesssim & \|u\|_{H^s}^{\alpha}\|J^{\frac{1}{2}} u\|_{H^{\theta_1}}  \lesssim \|u\|^{\alpha+1}_{H^{s}},   
\end{aligned}  
\end{equation*}
where we have used that $\|\mathcal{D}^{\frac{1}{2}}(u)\|_{L^q}\lesssim \|J^{\frac{1}{2}} u\|_{L^q}$, which is a consequence of Theorem \ref{stein}, and that $\frac{1}{2}+\theta_1\leq s$. 

Consequently, from now on, we assume that $0<\theta<\frac{N}{2}$. By H\"older's inequality, we have
\begin{equation*}
 \begin{aligned}
   \mathcal{I}_1\lesssim \|u\|_{L^{\infty}}^{\alpha+r-2p-1}\|\mathcal{D}^{\theta}(u)\|_{L^{q_0}}\prod_{l=1}^{2p+1-r}\|\partial^{\widetilde{\gamma_l}} u\|_{L^{q_l}},
 \end{aligned}   
\end{equation*}
where
\begin{equation}\label{genrestricitons}
\left\{\begin{aligned}
&\frac{1}{q_l}=\frac{1}{2}-\frac{\kappa_l}{N}, \quad 0<\kappa_l<\frac{N}{2}, \quad l=0,\dots,2p+1-r, \\
&\kappa_0+\sum_{l=1}^{2p+1-r} \kappa_j=\frac{N}{2}(2p+1-r).
 \end{aligned}   \right.
\end{equation}
Note that $\sum\limits_{l=0}^{2p+1-r}\frac{1}{q_l}=\frac{1}{2}$. Motivated by the estimate of the case $\mathscrsfs{A}$ above, we consider two sets
\begin{equation}\label{Asettoparti}
\begin{aligned}
&\mathcal{A}_{1}=\big\{l\in \{1,2,\dots, 2p+1-r\} : s-|\widetilde{\gamma_l}|<\frac{N}{2}\big\}, \\
&\mathcal{A}_{2}=\big\{l\in \{1,2,\dots, 2p+1-r\} : s-|\widetilde{\gamma_l}|\geq \frac{N}{2}\big\},
\end{aligned}   
\end{equation}
and we set
\begin{equation*}
 \begin{aligned}
& \kappa_l=s-|\widetilde{\gamma}_l|, \, \, \text{ if } \, \, l\in \mathcal{A}_1, \\
& \kappa_l=\frac{N}{2}-\epsilon, \, \, \text{ if } \, \, l\in \mathcal{A}_2,
 \end{aligned}   
\end{equation*}
where if $\mathcal{A}_2\neq \emptyset$, $\epsilon>0$ is such that 
\begin{equation}\label{epsiEQ}
 \epsilon |\mathcal{A}_2|\leq \frac{N}{2}-\theta+\sum_{l\in\mathcal{A}_1}\big(s-\frac{N}{2}-|\widetilde{\gamma}_l|\big).  
\end{equation}
Here, $|\mathcal{A}_j|$ denotes the number of elements in the set $\mathcal{A}_j$, $j=1,2$. Thus, it follows that
\begin{equation*}
\begin{aligned}
\kappa_0=\frac{N}{2}(2p+1-r)-\sum_{l\in \mathcal{A}_1}\big(s-|\widetilde{\gamma}_l|\big)-\sum_{l\in \mathcal{A}_2}\big(\frac{N}{2}-\epsilon\big). 
\end{aligned}
\end{equation*}
It is worth recalling that we are assuming $|\widetilde{\gamma}_l|<|\beta|$, since the case $|\widetilde{\gamma}_l|=|\beta|$ for some $l$ was treated in \eqref{casehihgerorderder}. Also recall that since $s>0$ is fractional, $0<\kappa_l<\frac{N}{2}$ for all $l\in \mathcal{A}_1$. We require the following result.
\begin{claim}\label{positvclaim}
Under the assumption that $\sum_{l=1}^{2p+1-r} |\widetilde{\gamma}_l|=|\beta|$  and that $|\widetilde{\gamma}_l|<|\beta|$, for each $l=1,\dots, 2p+1-r$, it follows    
\begin{equation}\label{postineq}
  \frac{N}{2}-\theta+\sum_{l\in\mathcal{A}_1}\big(s-\frac{N}{2}-|\widetilde{\gamma}_l|\big)>0 . 
\end{equation}
Moreover, if $\mathcal{A}_2 \neq \emptyset$, choose $\epsilon>0$ such that \eqref{epsiEQ} holds. Regardless of whether $\mathcal{A}_1$ is empty or not, it follows that $0 < \kappa_0 \leq \tfrac{N}{2} - \theta$. 
\end{claim}

We remark that \eqref{postineq} implies the existence of $\epsilon>0$, for which \eqref{epsiEQ} hold.
\begin{proof}[Proof of Claim \ref{positvclaim}]
We first deduce \eqref{postineq}. If $\mathcal{A}_1=\emptyset$, the inequality above is $\frac{N}{2}-\theta>0$. If $\mathcal{A}_1\neq \emptyset$, we write
\begin{equation*}
\begin{aligned}
\frac{N}{2}-\theta+\sum_{l\in\mathcal{A}_1}\big(s-\frac{N}{2}-|\widetilde{\gamma}_l|\big)=s-\theta-\sum_{l\in \mathcal{A}_1}|\widetilde{\gamma}_i|+\Big(s-\frac{N}{2}\Big)\big(|\mathcal{A}_1|-1\big).    
\end{aligned}    
\end{equation*}
If $|\mathcal{A}_1|>1$, using that $s-\theta-\sum_{l\in \mathcal{A}_i}|\widetilde{\gamma}_i|\geq 0$ and $s>\frac{N}{2}$, the inequality above implies \eqref{postineq}. Now, if $|\mathcal{A}_1|=1$, let $i\in \mathcal{A}_1$. Using that $|\widetilde{\gamma}_i|<\beta$, we deduce
\begin{equation*}
 \begin{aligned}
s-\theta-\sum_{l\in \mathcal{A}_1}|\widetilde{\gamma}_i|+\Big(s-\frac{N}{2}\Big)\big(|\mathcal{A}_1|-1\big)=s-\theta-|\widetilde{\gamma}_i|>s-\theta-|\beta|\geq 0.
 \end{aligned}   
\end{equation*}
The proof of \eqref{postineq} is complete.

On the other hand, let us deduce that $\kappa_0\leq \frac{N}{2}-\theta$. Using that $\frac{N}{2}(2p+1-r)=\sum_{l=1}^{2p+1-r} \frac{N}{2}=\sum_{l\in \mathcal{A}_1}\frac{N}{2}+\sum_{l\in \mathcal{A}_2}\frac{N}{2},$ we have that $\kappa_0\leq \frac{N}{2}-\theta$ is equivalent to
\begin{equation*}
 \begin{aligned}
0\leq &  \frac{N}{2}-\theta-\frac{N}{2}(2p+1-r)+\sum_{l\in \mathcal{A}_1}\big(s-|\widetilde{\gamma}_l|\big)+\sum_{l\in \mathcal{A}_2}\big(\frac{N}{2}-\epsilon\big) \\
=&  \frac{N}{2}-\theta+\sum_{l\in \mathcal{A}_1}\Big(s-\frac{N}{2}-|\widetilde{\gamma}_l|\Big)-\epsilon|\mathcal{A}_2|,
 \end{aligned}   
\end{equation*}
which by \eqref{postineq} is always true when $\mathcal{A}_2=\emptyset$, and when $\mathcal{A}_2\neq \emptyset$, it holds from our choice of $\epsilon>0$. To check that $\kappa_0>0$, we use $s-|\gamma_l|<\frac{N}{2}$, for all $l\in \mathcal{A}_1$ and that $\frac{N}{2}-\epsilon<\frac{N}{2}$ to obtain
\begin{equation*}
\begin{aligned}
\kappa_0=&\frac{N}{2}(2p+1-r)-\sum_{l\in \mathcal{A}_1}\big(s-|\widetilde{\gamma}_l|\big)-\sum_{l\in \mathcal{A}_2}\big(\frac{N}{2}-\epsilon\big)\\
>& \frac{N}{2}(2p+1-r)-\frac{N}{2}\big(|\mathcal{A}_1|+|\mathcal{A}_2|\big)=0,
\end{aligned}    
\end{equation*}
where we used that $|\mathcal{A}_1|+|\mathcal{A}_2|=2p+1-r$. We conclude that $0<\kappa_0<\frac{N}{2}-\theta$.
\end{proof}

Consequently, the previous theorem implies that we can apply Hardy-Littlewood-Sobolev inequality with restrictions \eqref{genrestricitons} to deduce that $H^{\kappa_l}(\mathbb{R}^N)\hookrightarrow L^{q_{l}}(\mathbb{R}^N)$, $l=1,\dots, 2p_1-r$. Moreover, if $l\in \mathcal{A}_1$, $|\widetilde{\gamma}_{l}|+\kappa_l\leq s$, and if $l\in \mathcal{A}_2$, $|\widetilde{\gamma}_{l}|+\kappa_l= |\widetilde{\gamma}_l|+\frac{N}{2}-\epsilon\leq s-\epsilon<s$. We also note that $\theta+\kappa_0\leq \frac{N}{2}$. Thus, using Sobolev embedding $H^s(\mathbb{R}^N)\hookrightarrow L^{\infty}(\mathbb{R}^N)$, we deduce
\begin{equation*}
\begin{aligned}
\mathcal{I}_1\lesssim & \|u\|_{L^{\infty}}^{\alpha+r-2p-1}\|J^{\theta}(u)\|_{L^{q_0}}\prod_{l=1}^{2p+1-r}\|\partial^{\widetilde{\gamma_l}} u\|_{L^{q_l}}\\
\lesssim & \|u\|_{L^{\infty}}^{\alpha+r-2p-1}\|J^{\theta}u\|_{H^{\kappa_0}}\prod_{l=1}^{2p+1-r}\|\partial^{\widetilde{\gamma_l}} u\|_{H^{\kappa_l}}\\
\lesssim & \|u\|_{H^s}^{\alpha}\|J^{\frac{N}{2}}u\|_{L^2}\\
\lesssim & \|u\|_{H^s}^{\alpha+1}.
 \end{aligned}   
\end{equation*}
Gathering the above results, we complete the estimate of $\mathcal{I}_1$. We now focus on $\mathcal{I}_2$. We first reduce our considerations to the case $0<\theta<\frac{N}{2}$. In dimension $N=1$, if $\frac{1}{2}<\theta<1$, we apply  Lemma \ref{Grafakos-Oh} and Sobolev embedding $H^{\theta}(\mathbb{R})\hookrightarrow L^{\infty}(\mathbb{R})$ to deduce
\begin{equation*}
 \begin{aligned}
 \mathcal{I}_2 \lesssim  \|u\|_{L^{\infty}}^{\alpha+r-2p}\sum_{m=1}^{2p+1-r}\|D^{\theta}(\partial^{\widetilde{\gamma}_m} u)\|_{L^{2}}\prod_{\substack{1\leq l\leq 2p+1-r\\ l\neq m}} \|\partial^{\widetilde{\gamma}_l} u\|_{L^{\infty}}\lesssim \|u\|_{H^s}^{\alpha+1}.
 \end{aligned}   
\end{equation*}
If $\theta=\frac{1}{2}$ and $\theta+|\widetilde{\gamma}_m|=s$, we argue exactly as before, i.e., taking the $L^2$-norm of $D^{\theta}(\partial^{\widetilde{\gamma}_m} u)$, and the $L^{\infty}$-norm of the remaining factors.  
Therefore, we assume that $|\widetilde{\gamma}_m|\leq s-1$. Then consider $\frac{1}{q}=\frac{1}{2}-\frac{2p-r}{2(2p+1-r)}$ and observe that $0<\theta_1:=\frac{2p-r}{2(2p+1-r)}<\frac{1}{2}$. Using Lemma \ref{Grafakos-Oh} and Sobolev embedding $H^{\theta_1}(\mathbb{R})\hookrightarrow L^q(\mathbb{R})$, we get 
\begin{equation*}
 \begin{aligned}
 \mathcal{I}_2 \lesssim  \|u\|_{L^{\infty}}^{\alpha+r-2p}\sum_{m=1}^{2p+1-r}\|D^{\frac{1}{2}}(\partial^{\widetilde{\gamma}_m} u)\|_{L^{q}}\prod_{\substack{1\leq l\leq 2p+1-r\\ l\neq m}} \|\partial^{\widetilde{\gamma}_l} u\|_{L^{q}}\lesssim \|u\|_{H^s}^{\alpha+1}.
 \end{aligned}   
\end{equation*}
Consequently, we will assume that $0<\theta<\frac{N}{2}$. The previous estimate reveals that if $\theta+|\widetilde{\gamma}_n|=s$, the desired result follows from taking the $L^2$-norm of $D^{\theta}(\partial^{\widetilde{\gamma}_m} u)$, and the $L^{\infty}$-norm of the other terms. Thus, we assume that $0<s-|\widetilde{\gamma}_m|-\theta$. We also mention that the case $|\widetilde{\gamma}_l|=|\beta|$ for some $l=1,\dots, 2p+1-r$ can be treated similarly. Thus, we consider $\sum_{l=1}^{2p+1-r} |\widetilde{\gamma}_l|=|\beta|$  with $|\widetilde{\gamma}_l|<|\beta|$, for each $l=1,\dots, 2p+1-r$.

By several applications of the fractional Leibniz rule in Lemma \ref{Grafakos-Oh} and H\"older's inequality, we get
\begin{equation*}
 \begin{aligned}
 \mathcal{I}_2 \lesssim  \|u\|_{L^{\infty}}^{\alpha+r-2p} \sum_{m=1}^{2p+1-r}\|D^{\theta}(\partial^{\widetilde{\gamma}_m} u)\|_{L^{q_m}}\prod_{\substack{1\leq l\leq 2p+1-r\\ l\neq m}} \|\partial^{\widetilde{\gamma}_l} u\|_{L^{q_l}},
 \end{aligned}   
\end{equation*}
where following the ideas in the estimate for $\mathcal{I}_1$ above, we define 
\begin{equation}
\left\{\begin{aligned}
&\frac{1}{q_l}=\frac{1}{2}-\frac{\kappa_l}{N}, \quad 0<\kappa_l<\frac{N}{2}, \quad l=1,\dots,2p+1-r, \\
&\sum_{l=1}^{2p+1-r} \kappa_j=\frac{N}{2}(2p-r).
 \end{aligned}   \right.
\end{equation}
We consider the same sets $\mathcal{A}_1$ and $\mathcal{A}_2$ in \eqref{Asettoparti} to define
\begin{equation*}
 \begin{aligned}
& \kappa_l=s-|\widetilde{\gamma}_l|, \, \, \text{ if } \, \, l\in \mathcal{A}_1, \, l\neq m, \\
& \kappa_l=\frac{N}{2}-\epsilon, \, \, \text{ if } \, \, l\in \mathcal{A}_2, \, l\neq m,
 \end{aligned}   
\end{equation*}
where if $\mathcal{A}_2\neq \emptyset$, we choose $0<\epsilon<\frac{N}{2}$ later. Hence, we have
\begin{equation}\label{kappacond}
 \kappa_m =\frac{N}{2}(2p-r)-\sum_{\substack{l\in \mathcal{A}_1\\ l\neq m}} \big(s-|\widetilde{\gamma}_l|\big)-\sum_{\substack{l\in \mathcal{A}_2\\ l\neq m}} \big(\frac{N}{2}-\epsilon\big). 
\end{equation}
We remark that the argument at the end of the proof of Claim \ref{positvclaim} shows $\kappa_m>0$. Next, we consider two cases depending on $s-|\widetilde{\gamma}_m|$.

\underline{\bf Case: $0<s-|\widetilde{\gamma}_m|<\frac{N}{2}$}.  Let us check under which conditions $0<\kappa_m\leq s-|\widetilde{\gamma}_m|-\theta$. Using that $\frac{N}{2}(2p-r)=\frac{N}{2}\Big(\sum_{\substack{l\in \mathcal{A}_1\\ l\neq m}} +\sum_{\substack{l\in \mathcal{A}_2}}\Big)$, it follows from \eqref{kappacond} that $\kappa_m\leq s-|\widetilde{\gamma}_m|-\theta$ is equivalent to
\begin{equation*}
 \begin{aligned}
     \epsilon|\mathcal{A}_2|\leq &s-|\widetilde{\gamma}_m|-\theta+\sum_{\substack{l\in \mathcal{A}_1\\ l\neq m}} \big(s-\frac{N}{2}+|\widetilde{\gamma}_l|\big)
     = \frac{N}{2}-\theta +\sum_{l\in \mathcal{A}_1} \big(s-\frac{N}{2}-|\widetilde{\gamma}_l|\big),
 \end{aligned}   
\end{equation*}
where the right-hand side of the above inequality is positive by Claim \ref{positvclaim}. As a consequence, if $|\mathcal{A}_2|=0$, it is always the case $0<\kappa_m\leq s-|\widetilde{\gamma}_m|-\theta$. If $|\mathcal{A}_2|\neq 0$, to get the same conclusion, it is enough to take $0<\epsilon<\frac{N}{2}$ such that
\begin{equation*}
\begin{aligned}
 \epsilon |\mathcal{A}_2| \leq \frac{N}{2}-\theta +\sum_{l\in \mathcal{A}_1} \big(s-\frac{N}{2}-|\widetilde{\gamma}_l|\big).
\end{aligned}    
\end{equation*}
Hence, under the previous restriction, we apply the Sobolev embedding $H^{\kappa_l}(\mathbb{R}^N)\hookrightarrow L^{q_l}(\mathbb{R}^N)$ to deduce
\begin{equation*}
 \begin{aligned}
 \|D^{\theta}(\partial^{\widetilde{\gamma}_m} u)\|_{L^{q_m}}\prod_{\substack{1\leq l\leq 2p+1-r\\ l\neq m}} \|\partial^{\widetilde{\gamma}_l} u\|_{L^{q_l}}\lesssim \|u\|_{H^s}^{2p+1-r}.
 \end{aligned}   
\end{equation*}
\underline{\bf Case: $s-|\widetilde{\gamma}_m|\geq \frac{N}{2}$}. Here, we claim that $\kappa_{m}\leq \frac{N}{2}-\theta$. To see this, we write $\frac{N}{2}(2p-r)=\frac{N}{2}\Big(\sum_{\substack{l\in \mathcal{A}_1}} +\sum_{\substack{l\in \mathcal{A}_2\\ l\neq m}}\Big)$ to deduce that $\kappa_{m}\leq \frac{N}{2}-\theta$ is equivalent to
\begin{equation*}
\epsilon \big(|\mathcal{A}_2|-1\big)\leq \frac{N}{2}-\theta +\sum_{l\in \mathcal{A}_1} \big(s-\frac{N}{2}-|\widetilde{\gamma}_l|\big),    
\end{equation*}
where it is worth emphasizing that, since $s-|\widetilde{\gamma}_m|\geq \frac{N}{2}$, $|\mathcal{A}_2|\geq 1$. Consequently, if $|\mathcal{A}_2|=1$, it is always the case $\kappa_m\leq \frac{N}{2}-\theta$. If $|\mathcal{A}_2|\geq 2$, we can take $0<\epsilon<\frac{N}{2}$ such that the above inequality holds. Thus, the desired result is now a consequence of the embedding $H^{\kappa_l}(\mathbb{R}^N)\hookrightarrow L^{q_l}(\mathbb{R}^N)$, $l=1,\dots, 2p+1-r$. For example, one has $\|D^{\theta}(\partial^{\widetilde{\gamma}_m} u)\|_{L^{q_m}}\lesssim  \|J^{|\widetilde{\gamma}_m|+\theta+\kappa_m} u\|_{L^{2}}\lesssim \|J^{|\widetilde{\gamma}_m|+\frac{N}{2}} u\|_{L^{2}}\lesssim \|u\|_{H^s}$.

Summarizing, we conclude that
\begin{equation*}
 \begin{aligned}
  \mathcal{I}_2\lesssim \|u\|_{H^s}^{\alpha+1},   
 \end{aligned}   
\end{equation*}
which completes the proof of the estimate for $|u|^{\alpha} u\in H^s(\mathbb{R})$. 

We remark that in the study of the previous estimates, we have identified the relevant indices to apply different Sobolev embeddings according to the case at hand. As a consequence,  when dealing with the difference $|u|^{\alpha}u-|v|^{\alpha}v$, the idea is to group the terms as in the proof of Lemma \ref{nonlinear est}, and use the result of Lemma \ref{GlemmafracderNON} together with the previous restrictions to obtain the same conclusion. To avoid unnecessary repetitions, we summarize this estimate as 
\begin{equation*}
 \||u|^{\alpha}u-|v|^{\alpha}v\|_{H^s}\lesssim \big(1+\|u\|_{H^{\alpha}}+\|v\|_{H^{\alpha}}\big)^{\alpha}\|u-v\|_{H^s}, 
\end{equation*}
completing the proof of the lemma. 
\end{proof}

\begin{proof}[Proof of Theorem \ref{LWPresults}]
Given the validity of the nonlinear estimates in Lemma \ref{nonlinearHS}, the existence of solutions follows from a fixed-point argument applied to the integral formulation of \eqref{biNLS}, similar to the proof of Theorem \ref{theorem1}. We omit this and further standard details concerning the local well-posedness.
\end{proof}

\bigskip






{\bf ORCID}

{\it Iryna Petrenko} 
\qquad 
{https://orchid.org/
0009-0008-6579-7117}

{\it Oscar Ria\~no}
\qquad {https://orcid.org/0000-0002-6325-8848}

{\it Svetlana Roudenko}
\qquad 
{https://orcid.org/0000-0002-7407-7639}


\bibliographystyle{abbrv}
\bibliography{references}

@article{Aceves2023,
    AUTHOR = {Aceves, Alejandro},
     TITLE = {Discrete optical solitons: Perspectives and new trends},
   JOURNAL = {Optics Commun.},
  FJOURNAL = {Optics Communications},
    VOLUME = {545},
      YEAR = {2023},
     PAGES = {129713},
      ISSN = {0030-4018},
       DOI = {10.1016/j.optcom.2023.129713},
}

@article{PA2024,
    AUTHOR = {Parker, Ross and Aceves, Alejandro},
     TITLE = {Topological photonics: a mathematical perspective},
   JOURNAL = {Notices AMS},
  FJOURNAL = {Notices of the American Mathematical Society},
    VOLUME = {71},
      YEAR = {2024},
    NUMBER = {8},
     PAGES = {994--1003},
      ISSN = {0002-9920,1088-9477},
   MRCLASS = {78A02 (81V80)},
  MRNUMBER = {4785129},
}

@article{HA2024,
    AUTHOR = {Hetzel, Sabrina and Aceves, Alejandro},
     TITLE = {Generation and interaction of quartic soliton-like pulses in
              an optical cavity},
   JOURNAL = {Phys. D},
  FJOURNAL = {Physica D. Nonlinear Phenomena},
    VOLUME = {481},
      YEAR = {2025},
     PAGES = {Paper No. 134829},
      ISSN = {0167-2789,1872-8022},
   MRCLASS = {99-06},
  MRNUMBER = {4938189},
       DOI = {10.1016/j.physd.2025.134829},
       URL = {https://doi.org/10.1016/j.physd.2025.134829},
}

@article{LightBullets2024,
  title = {Pure quartic three-dimensional spatiotemporal {K}err solitons in graded-index media},
  author = {Parra-Rivas, Pedro and Sun, Yifan and Mangini, Fabio and Ferraro, Mario and Zitelli, Mario and Wabnitz, Stefan},
  journal = {Phys. Rev. A},
  volume = {109},
  issue = {3},
  pages = {033516},
  numpages = {6},
  year = {2024},
  month = {Mar},
  publisher = {American Physical Society},
  doi = {10.1103/PhysRevA.109.033516},
  url = {https://link.aps.org/doi/10.1103/PhysRevA.109.033516}
}

@article{Pastor2023,
    AUTHOR = {Mu\~{n}oz, Alexander and Pastor, Ademir},
     TITLE = {Local well-posedness in weighted {S}obolev spaces for
              nonlinear dispersive equations with applications to dispersive
              blow up},
   JOURNAL = {Math. Ann.},
  FJOURNAL = {Mathematische Annalen},
    VOLUME = {386},
      YEAR = {2023},
    NUMBER = {1-2},
     PAGES = {207--246},
      ISSN = {0025-5831,1432-1807},
   MRCLASS = {35B44 (35Q53)},
  MRNUMBER = {4585148},
MRREVIEWER = {Luiz\ Gustavo\ Farah},
       DOI = {10.1007/s00208-022-02396-7},
       URL = {https://doi.org/10.1007/s00208-022-02396-7},
}

@article {LinaresMiyazaki2019,
    AUTHOR = {Linares, Felipe and Miyazaki, Hayato and Ponce, Gustavo},
     TITLE = {On a class of solutions to the generalized {K}d{V} type
              equation},
   JOURNAL = {Commun. Contemp. Math.},
  FJOURNAL = {Communications in Contemporary Mathematics},
    VOLUME = {21},
      YEAR = {2019},
    NUMBER = {7},
     PAGES = {1850056, 21},
      ISSN = {0219-1997,1793-6683},
   MRCLASS = {35Q53 (35A01 35A21 35B30)},
  MRNUMBER = {4017781},
MRREVIEWER = {Anthony\ D.\ Osborne},
       DOI = {10.1142/S0219199718500566},
       URL = {https://doi.org/10.1142/S0219199718500566},
}

@article {Diana2023,
    AUTHOR = {Friedman, Isaac and Ria\~{n}o, Oscar and Roudenko, Svetlana
              and Son, Diana and Yang, Kai},
     TITLE = {Well-posedness and dynamics of solutions to the generalized
              {K}d{V} with low power nonlinearity},
   JOURNAL = {Nonlinearity},
  FJOURNAL = {Nonlinearity},
    VOLUME = {36},
      YEAR = {2023},
    NUMBER = {1},
     PAGES = {584--635},
      ISSN = {0951-7715,1361-6544},
   MRCLASS = {35Q53},
  MRNUMBER = {4533334},
MRREVIEWER = {Wenbin\ Zhang},
       DOI = {10.1088/1361-6544/ac93e1},
       URL = {https://doi.org/10.1088/1361-6544/ac93e1},
}

@article{ARR2022,
    AUTHOR = {Arora, Anudeep K. and Ria\~{n}o, Oscar and Roudenko, Svetlana},
     TITLE = {Well-posedness in weighted spaces for the generalized
              {H}artree equation with {$p<2$}},
   JOURNAL = {Commun. Contemp. Math.},
  FJOURNAL = {Communications in Contemporary Mathematics},
    VOLUME = {24},
      YEAR = {2022},
    NUMBER = {9},
     PAGES = {Paper No. 2150074, 51},
      ISSN = {0219-1997},
   MRCLASS = {35Q55 (35A01 35B40 42B37)},
  MRNUMBER = {4502388},
       DOI = {10.1142/S0219199721500747},
       URL = {https://doi.org/10.1142/S0219199721500747},
}

@article{HHW2006,
    AUTHOR = {Hao, Chengchun and Hsiao, Ling and Wang, Baoxiang},
     TITLE = {Wellposedness for the fourth order nonlinear {S}chr\"{o}dinger
              equations},
   JOURNAL = {J. Math. Anal. Appl.},
  FJOURNAL = {Journal of Mathematical Analysis and Applications},
    VOLUME = {320},
      YEAR = {2006},
    NUMBER = {1},
     PAGES = {246--265},
      ISSN = {0022-247X},
   MRCLASS = {35Q55 (35B30)},
  MRNUMBER = {2230468},
MRREVIEWER = {Masahito Ohta},
       DOI = {10.1016/j.jmaa.2005.06.091},
       URL = {https://doi.org/10.1016/j.jmaa.2005.06.091},
}

@article{HHW2007,
    AUTHOR = {Hao, Chengchun and Hsiao, Ling and Wang, Baoxiang},
     TITLE = {Well-posedness of {C}auchy problem for the fourth order
              nonlinear {S}chr\"{o}dinger equations in multi-dimensional spaces},
   JOURNAL = {J. Math. Anal. Appl.},
  FJOURNAL = {Journal of Mathematical Analysis and Applications},
    VOLUME = {328},
      YEAR = {2007},
    NUMBER = {1},
     PAGES = {58--83},
      ISSN = {0022-247X},
   MRCLASS = {35Q55 (35B30)},
  MRNUMBER = {2285532},
MRREVIEWER = {Justin A. Holmer},
       DOI = {10.1016/j.jmaa.2006.05.031},
       URL = {https://doi.org/10.1016/j.jmaa.2006.05.031},
}

@article{HMN2016,
    AUTHOR = {Hayashi, Nakao and Mendez-Navarro, Jesus A. and Naumkin, Pavel
              I.},
     TITLE = {Scattering of solutions to the fourth-order nonlinear
              {S}chr\"{o}dinger equation},
   JOURNAL = {Commun. Contemp. Math.},
  FJOURNAL = {Communications in Contemporary Mathematics},
    VOLUME = {18},
      YEAR = {2016},
    NUMBER = {3},
     PAGES = {1550035, 24},
      ISSN = {0219-1997},
   MRCLASS = {35Q55 (35B40 35P25)},
  MRNUMBER = {3477398},
       DOI = {10.1142/S0219199715500352},
       URL = {https://doi.org/10.1142/S0219199715500352},
}

@article {Naum2022,
    AUTHOR = {Naumkin, Pavel I.},
     TITLE = {Asymptotics for the fourth-order nonlinear {S}chr\"odinger
              equation in 2{D}},
   JOURNAL = {Commun. Contemp. Math.},
  FJOURNAL = {Communications in Contemporary Mathematics},
    VOLUME = {24},
      YEAR = {2022},
    NUMBER = {1},
     PAGES = {Paper No. 2050090, 29},
      ISSN = {0219-1997,1793-6683},
   MRCLASS = {35B40 (35Q55 35S30)},
  MRNUMBER = {4378150},
       DOI = {10.1142/S021919972050090X},
       URL = {https://doi.org/10.1142/S021919972050090X},
}

@article{KPV1991,
    AUTHOR = {Kenig, Carlos E. and Ponce, Gustavo and Vega, Luis},
     TITLE = {Oscillatory integrals and regularity of dispersive equations},
   JOURNAL = {Indiana Univ. Math. J.},
  FJOURNAL = {Indiana University Mathematics Journal},
    VOLUME = {40},
      YEAR = {1991},
    NUMBER = {1},
     PAGES = {33--69},
       DOI = {10.1512/iumj.1991.40.40003},
}

@article{BAKS2000,
    AUTHOR = {Ben-Artzi, Matania and Koch, Herbert and Saut, Jean-Claude},
     TITLE = {Dispersion estimates for fourth order {S}chr\"{o}dinger equations},
   JOURNAL = {C. R. Acad. Sci. Paris S\'{e}r. I Math.},
  FJOURNAL = {Comptes Rendus de l'Acad\'{e}mie des Sciences. S\'{e}rie I. Math\'{e}matique},
    VOLUME = {330},
      YEAR = {2000},
    NUMBER = {2},
     PAGES = {87--92},
       DOI = {10.1016/S0764-4442(00)00120-8},
}

@article{K1991,
    AUTHOR = {Karpman, V. I.},
     TITLE = {Influence of high-order dispersion on self-focusing {I}. {Q}ualitative investigation},
   JOURNAL = {Phys. Lett. A},
  FJOURNAL = {Physics Letters. A},
    VOLUME = {160},
      YEAR = {1991},
    NUMBER = {6},
     PAGES = {531--537},
}

@article{CazNaum2016,
author = {Cazenave, Thierry and Naumkin, Ivan},
title = {{Local existence, global existence, and scattering for the nonlinear {S}chr\"odinger equation}},
JOURNAL = {Comm. Contemp. Math.},
FJOURNAL = {Communication in Contemporary Mathematics},
volume = {19},
number = {02},
pages = {1650038, 20 pp},
year = {2017},
doi = {10.1142/S0219199716500383},
}

@article {Ponce2009,
    AUTHOR = {Nahas, J. and Ponce, G.},
     TITLE = {On the persistent properties of solutions to semi-linear
              {S}chr\"{o}dinger equation},
   JOURNAL = {Comm. Partial Differential Equations},
  FJOURNAL = {Communications in Partial Differential Equations},
    VOLUME = {34},
      YEAR = {2009},
    NUMBER = {10-12},
     PAGES = {1208--1227},
      ISSN = {0360-5302},
   MRCLASS = {35Q55 (35B30)},
  MRNUMBER = {2581970},
MRREVIEWER = {Michael J. Goldberg},
       DOI = {10.1080/03605300903129044},
       URL = {https://doi.org/10.1080/03605300903129044},
}

@article {Pausader2007,
    AUTHOR = {Pausader, Benoit},
     TITLE = {Global well-posedness for energy critical fourth-order {S}chr\"{o}dinger equations in the radial case},
   JOURNAL = {Dyn. Partial Differ. Equ.},
  FJOURNAL = {Dynamics of Partial Differential Equations},
    VOLUME = {4},
      YEAR = {2007},
    NUMBER = {3},
     PAGES = {197--225},
      ISSN = {1548-159X},
   MRCLASS = {35Q55 (35B25 35B30)},
  MRNUMBER = {2353631},
MRREVIEWER = {Alessio Pomponio},
       DOI = {10.4310/DPDE.2007.v4.n3.a1},
       URL = {https://doi.org/10.4310/DPDE.2007.v4.n3.a1},
}

@article {Pausader2009_1,
    AUTHOR = {Pausader, Beno\^{\i}t},
     TITLE = {The focusing energy-critical fourth-order {S}chr\"{o}dinger equation with radial data},
   JOURNAL = {Discrete Contin. Dyn. Syst.},
  FJOURNAL = {Discrete and Continuous Dynamical Systems. Series A},
    VOLUME = {24},
      YEAR = {2009},
    NUMBER = {4},
     PAGES = {1275--1292},
      ISSN = {1078-0947},
   MRCLASS = {35Q55 (35B25 35P25)},
  MRNUMBER = {2505703},
MRREVIEWER = {Alessio Pomponio},
       DOI = {10.3934/dcds.2009.24.1275},
       URL = {https://doi.org/10.3934/dcds.2009.24.1275},
}

@article {Pausader2009_2,
    AUTHOR = {Pausader, Benoit},
     TITLE = {The cubic fourth-order {S}chr\"{o}dinger equation},
   JOURNAL = {J. Funct. Anal.},
  FJOURNAL = {Journal of Functional Analysis},
    VOLUME = {256},
      YEAR = {2009},
    NUMBER = {8},
     PAGES = {2473--2517},
      ISSN = {0022-1236},
   MRCLASS = {35Q55 (35B30 35G25 35P25 78A60)},
  MRNUMBER = {2502523},
MRREVIEWER = {Jan L. Cie\'{s}li\'{n}ski},
       DOI = {10.1016/j.jfa.2008.11.009},
       URL = {https://doi.org/10.1016/j.jfa.2008.11.009},
}

@article {Dinh2018,
    AUTHOR = {Dinh, Van Duong},
     TITLE = {On well-posedness, regularity and ill-posedness for the nonlinear fourth-order {S}chr\"{o}dinger equation},
   JOURNAL = {Bull. Belg. Math. Soc. Simon Stevin},
  FJOURNAL = {Bulletin of the Belgian Mathematical Society. Simon Stevin},
    VOLUME = {25},
      YEAR = {2018},
    NUMBER = {3},
     PAGES = {415--437},
      ISSN = {1370-1444},
   MRCLASS = {35Q55 (35B30 35G20 35G25)},
  MRNUMBER = {3852677},
       DOI = {10.36045/bbms/1536631236},
       URL = {https://doi.org/10.36045/bbms/1536631236},
}

@article {Stein1961,
    AUTHOR = {Stein, E. M.},
     TITLE = {The characterization of functions arising as potentials},
   JOURNAL = {Bull. Amer. Math. Soc.},
  FJOURNAL = {Bulletin of the American Mathematical Society},
    VOLUME = {67},
      YEAR = {1961},
     PAGES = {102--104},
      ISSN = {0002-9904},
   MRCLASS = {31.22},
  MRNUMBER = {123729},
MRREVIEWER = {I. I. Hirschman, Jr.},
       DOI = {10.1090/S0002-9904-1961-10517-X},
       URL = {https://doi.org/10.1090/S0002-9904-1961-10517-X},
}

@article{Alex,
    AUTHOR = {Oscar Riano and Alex D. Rodriguez and Svetlana Roudenko},
     title = {Nonlinear {S}chr\"{o}dinger equation with combined nonlinearities},
   JOURNAL = {Nonlinearity},
VOLUME = {39}, 
YEAR = {2026}, 
PAGES = {055014},
DOI = {doi:10.1088/1361-6544/ae64a1},
}

@article{SRR, 
    AUTHOR = {Sanchez, Beckett and Ria\~no, Oscar and Roudenko, Svetlana},
     TITLE = {On the well-posedness of the periodic fractional
              {S}chr\"odinger equation},
   JOURNAL = {Partial Differ. Equ. Appl.},
  FJOURNAL = {Partial Differential Equations and Applications},
    VOLUME = {6},
      YEAR = {2025},
    NUMBER = {3},
     PAGES = {Paper No. 23, 28},
      ISSN = {2662-2963,2662-2971},
   MRCLASS = {35Q55 (35A01 35B10 35Q40 35R11)},
  MRNUMBER = {4899400},
MRREVIEWER = {Luen-Chau\ Li},
       DOI = {10.1007/s42985-025-00327-0},
       URL = {https://doi.org/10.1007/s42985-025-00327-0},
}

@article{KS1991,
title = {Influence of high-order dispersion on self-focusing. II. Numerical investigation},
journal = {Physics Letters A},
volume = {160},
number = {6},
pages = {538-540},
year = {1991},
issn = {0375-9601},
doi = {https://doi.org/10.1016/0375-9601(91)91064-K},
url = {https://www.sciencedirect.com/science/article/pii/037596019191064K},
author = {V.I. Karpman and A.G. Shagalov}
}

@article{Tam2020,
  title = {Generalized dispersion {K}err solitons},
  author = {Tam, Kevin K. K. and Alexander, Tristram J. and Blanco-Redondo, Andrea and de Sterke, C. Martijn},
  journal = {Phys. Rev. A},
  volume = {101},
  issue = {4},
  pages = {043822},
  numpages = {11},
  year = {2020},
  month = {Apr},
  publisher = {American Physical Society},
  doi = {10.1103/PhysRevA.101.043822},
  url = {https://link.aps.org/doi/10.1103/PhysRevA.101.043822}
}

@article{Tam2019,
author = {Kevin K. K. Tam and Tristram J. Alexander and Andrea Blanco-Redondo and C. Martijn de Sterke},
journal = {Opt. Lett.},
number = {13},
pages = {3306--3309},
publisher = {Optica Publishing Group},
title = {Stationary and dynamical properties of pure-quartic solitons},
volume = {44},
month = {Jul},
year = {2019},
url = {https://opg.optica.org/ol/abstract.cfm?URI=ol-44-13-3306},
doi = {10.1364/OL.44.003306}
}

@article{Nature,
    AUTHOR = {Andrea Blanco-Redondo and C. Martijn de Sterke and J. E. Sipe and Thomas F. Krauss and Benjamin J. Eggleton and Chad Husko},
     TITLE = {Pure-quartic solitons},
   JOURNAL = {Nature Communications},
    VOLUME = {7},
      YEAR = {2016},
    NUMBER = {10427},
     PAGES = {1--8},
       DOI = {10.1038/ncomms10427}
}

@book {Tao2006,
    AUTHOR = {Tao, Terence},
     TITLE = {Nonlinear dispersive equations},
    SERIES = {CBMS Regional Conference Series in Mathematics},
    VOLUME = {106},
 PUBLISHER = {AMS, Providence, RI},
      YEAR = {2006},
     PAGES = {xvi+373},
      ISBN = {0-8218-4143-2},
   MRCLASS = {35Q53 (35B35 35P25 35Q55 37K10)},
  MRNUMBER = {2233925},
MRREVIEWER = {Sebastian\ Herr},
       DOI = {10.1090/cbms/106},
       URL = {https://doi.org/10.1090/cbms/106},
}

@article {GrafakosOh2014,
    AUTHOR = {Grafakos, Loukas and Oh, Seungly},
     TITLE = {The {K}ato-{P}once inequality},
   JOURNAL = {Comm. Partial Differential Equations},
  FJOURNAL = {Communications in Partial Differential Equations},
    VOLUME = {39},
      YEAR = {2014},
    NUMBER = {6},
     PAGES = {1128--1157},
      ISSN = {0360-5302,1532-4133},
   MRCLASS = {47G40 (31C15 42B20 46E35)},
  MRNUMBER = {3200091},
MRREVIEWER = {Steven\ George\ Krantz},
       DOI = {10.1080/03605302.2013.822885},
       URL = {https://doi.org/10.1080/03605302.2013.822885},
}

\end{document}